\numberwithin{equation}{section}
\definecolor{light}{gray}{.95}
\newtheorem{prop}{Proposition}[section]
\newtheorem{lem}[prop]{Lemma}
\newtheorem{theorem}[prop]{Theorem}
\theoremstyle{definition}
\newtheorem{dfn}[prop]{Definition}
\theoremstyle{remark}
\newtheorem{rem}[prop]{Remark}
\newtheorem{construction}[prop]{Construction}
\newtheorem{rems}[prop]{Remarks}
\newtheorem{properties}[prop]{Properties}
\newtheorem{notation}[prop]{Notation}
\def\dim{\mathop{\rm dim}}
\def\Hom{\mathop{\rm Hom}}
\def\id{\mathop{\rm id}}
\newcommand{\cK}{{\mathcal K}}
\def\phi{\varphi}
\def\R{{\mathbin{{\mathbb {R}}}}}
\def\Z{\mathbin{\mathbb {Z}}}
\def\C{\mathbb{C}}
\def\cA{\mathcal A}
\def\ccA{{\mathfrak A}}
\def\cL{{\mathcal L}}
\def\cT{{\mathcal T}}
\def\cL{{\mathcal L}}
\def\N{\mathcal{V}}
\def\KFP{{the weak (KFP) property}} 
\def\SKFP{{property  (KFP)}}
\def\ind{\mathop{\rm ind}}
\def\dim{\mathop{\rm dim}}
\def\cQ{{\mathcal {Q}}}
\newcommand\Di{D\kern-7pt/}
\definecolor{light}{gray}{.95}
\definecolor{olive}{cmyk}{0.64,0,0.95,0.4}
\par \vspace{0cm}\ \\ \noindent 
\let\oldmarginpar\marginpar
\renewcommand\marginpar[1]{\-\oldmarginpar[\raggedleft\footnotesize #1]%
{\raggedright\footnotesize #1}}
\newcommand{\morph}[2]{\delta^{#1}_{#2}}
\begin{document}

\title[Bivariant $K$-theory with $\R/\Z$-coefficients and rho classes]{Bivariant $K$-theory with $\R/\Z$-coefficients and rho classes of unitary representations}

\author{Paolo Antonini, Sara Azzali and Georges Skandalis}
\thanks{Paolo Antonini has received funding from
the \emph{European Research Council} (E.R.C.) under European Union's Seventh Framework Program
(FP7/2007-2013), ERC grant agreement No. 291060; Georges Skandalis is funded by ANR-14-CE25-0012-01. We thank these institutions for their support.}
\maketitle
\begin{abstract}
We construct equivariant $KK$-theory with coefficients in $\R$ and $\R/\Z$ as suitable inductive limits over ${\rm II}_1$-factors. We show that the Kasparov product, together with its usual functorial properties, extends to $KK$-theory with real coefficients.

Let $\Gamma$ be a group. We define a $\Gamma$-algebra $A$  to be \emph{$K$-theoretically free and proper} (KFP) if  the group trace ${\bf tr}$ of $\Gamma$ acts as the unit element in $KK^\Gamma_\R(A,A)$. We show that free and proper $\Gamma$-algebras (in the sense of Kasparov) have the (KFP) property. Moreover, if $\Gamma$ is torsion free and satisfies the $KK^\Gamma$-form of the Baum-Connes conjecture, then every $\Gamma$-algebra satisfies (KFP). 

If $\alpha:\Gamma\to U_n$ is a unitary representation and $A$ satisfies property (KFP), we construct in a canonical way a rho class $\rho_\alpha^A\in KK_{\R/\Z}^{1,\Gamma}(A,A)$. This construction generalizes the Atiyah-Patodi-Singer $K$-theory class with $\R/\Z$ coefficients associated to $\alpha$.  

\end{abstract}

\setcounter{tocdepth}{1}

\section*{Introduction}

Let $V$ be a closed manifold with fundamental group $\Gamma$. The symmetric index of an elliptic pseudodifferential operator $D$ on $V$ is an element of the $K$-theory of the group $C^*$-algebra $\ind_\Gamma(D)\in K_0(C^*\Gamma)$.

Atiyah's $L^2$-index theorem for covering spaces (\cite{Ati}) expresses the triviality of the group trace on this element: it states that the image of the index class $\ind_\Gamma(D)$ by the trivial representation of $\Gamma$, which is the ordinary index of $D$, coincides with the image of $\ind_\Gamma(D)$ by the group trace of $\Gamma$, 
\emph{i.e.} the von Neumann index of the associated $\Gamma$-invariant operator acting on the universal cover $\widetilde V$.

This property of the group trace in Atiyah's theorem is equivalent  to the fact that the Mishchenko bundle with fibre a ${\rm II}_1$-factor can be trivialized. This plays a crucial role in the construction of secondary invariants.

Let  $\alpha: \Gamma\to U_n$ be a unitary representation. Atiyah, Patodi and Singer constructed a class $[\alpha]_{{\rm APS}}$ in the $K$-theory of $V$ with $\R/\Z$-coefficients in a way that the pairing of $[\alpha]_{{\rm APS}}$ with the $K$-homology class $[D]$ is equal in $\R/\Z$ to the reduced rho invariant \cite{APS2, APS3}. 

We showed in \cite{AAS}, to which we refer the reader for a more comprehensive literature, that the class $[\alpha]_{{\rm APS}}$ can be given by a purely $K$-theoretical construction, using von Neumann algebras.  In this description, $[\alpha]_{{\rm APS}}$ is a relative class, represented by two bundles which become isomorphic when twisted by a bundle with fiber a ${\rm II}_1$-factor. In a sense we showed that   $[\alpha]_{{\rm APS}}$ can be constructed as a secondary invariant built as a consequence of the triviality of the trace in Atiyah's $L^2$-index theorem.

\medskip
In the present paper, we examine general cases for which the same triviality of the trace occurs and associate to them secondary invariants in an appropriate $KK$-theory with $\R/\Z$ coefficients.

To do so, we first have to give a general definition of $KK$-theory with $\R$ and $\R/\Z$ coefficients.

It is quite clear that the $KK$-theory with real coefficients should naturally be defined using ${\rm II}_1$-factors; in other words, one is tempted to put $KK_\R(A,B)=KK(A,B\otimes M)$ where $M$ is a ${\rm II}_1$-factor. We immediately fall into the question: which ${\rm II}_1$-factor should be chosen? If $A$ and $B$ are in the bootstrap category, the group $KK(A,B\otimes M)$ is equal to $\operatorname{Hom}(K_*(A)\to K_*(B)\otimes \R)$ whence does not depend on the ${\rm II}_1$-factor $M$. But in general, this group depends (a priori) on $M$. There is of course a ``minimal choice'' for $M$, namely the hyperfinite ${\rm II}_1$-factor $R$, but we need here factors big enough to contain the group von Neumann algebra of our $\Gamma$: then it is not reasonable in this discussion to assume $\Gamma$ to be amenable, since the typical $\Gamma$ is a dense subgroup of $U_n$. On the opposite, Ozawa (\cite{Oz})  proved that there is no ``maximal choice'' for $M$.

Our solution is to define $KK_\R(A,B)$ as the \emph{inductive limit of}  $KK(A,B\otimes M)$  \emph{over all  $\,{\rm II}_1$-factors $M$}. It is not too difficult to give a sense to this inductive limit. One extends also very easily the Kasparov product with all its functorial properties and associativity to the $KK_\R$-theory (using von Neumann tensor products).

We then define $KK_{\R/\Z}$ as the inductive limit of $KK(A,B\otimes  {\rm Cone}(\C\to M))$. The mapping cone exact sequence yields the Bockstein change of coefficients sequence.

In the same way, one defines all the equivariant $KK$-theories with $\R$ and $\R/\Z$ coefficients, letting the group (or more generally a group-like object) act trivially on the ${\rm II}_1$-factors.

\medskip A tracial state $\tau$ on a $C^*$-algebra $D$ may be though of as a generalized morphism to a ${\rm II}_1$-factor: in fact, there is a ${\rm II}_1$-factor $M$ and a tracial morphism $D\to M$, so that we obtain a class $[\tau]$ in $KK_{\R}(D,\C)$. Using Kasparov product, we then obtain morphisms $\tau_*:KK(A,B\otimes D)\rightarrow KK_{\R}(A,B)$ for every $C^*$-algebras $A$ and $B$.

Let now $\Gamma$ be a group. A tracial state on $C^*\Gamma$ defines an element in $KK_{\R}(C^*\Gamma,\C)$. This group bares a ring structure using the coproduct of $\Gamma$, and is actually equal to the equivariant group $KK_{\R}^{\Gamma}(\C,\C)$.

\medskip 
 Denote by ${\bf tr}$ the group trace of $\Gamma$ and $[{\bf tr}]^\Gamma$ its class, which is an idempotent of the ring $KK_{\R}^{\Gamma}(\C,\C)$. As all the Kasparov groups $KK_\R^\Gamma(A,B)$ are modules over this ring, a natural question is: what is the image of this idempotent on these $KK_{\R}^{\Gamma}(\C,\C)$ modules?

We will be interested on those $C^*$-algebras $A$ endowed with an action of $\Gamma$ (in short, such an $A$ is called a $\Gamma$-algebra), for which the element $[{\bf tr}]^\Gamma$ acts as a unit element in $KK^\Gamma_\R(A,A)$. This means that $1_A^\Gamma\otimes_\C[{\bf tr}]^\Gamma=1_{A,\R}^\Gamma\in KK^\Gamma_\R(A,A)$. When this holds, we say that $A$ satisfies \SKFP.

This definition is inspired by the commutative case where the deck group $\Gamma$ of a Galois covering $\widetilde V$ of a closed manifold $V$ acts on $C_0(\widetilde V)$. Atiyah's theorem in \cite{Ati} can be interpreted as stating that the $\Gamma $-algebra $C_0(\widetilde V)$ satisfies  \SKFP.

In this commutative case,  \SKFP\ comes from the fact that the action of $\Gamma$ on $\widetilde V$ is free and proper. This is why algebras satisfying this condition are thought of as being in a sense $K$-theoretically free and proper.

\medskip
Let $[\alpha]^\Gamma\in KK^\Gamma(\C,\C)$ be the class of a unitary representation $\alpha:\Gamma \to U_n(\C)$ in the Kasparov representation ring. Since the regular representation of $\Gamma$ absorbs, we have $[\alpha]^\Gamma\otimes [{\bf tr}]^\Gamma=n.[{\bf tr}]^\Gamma$.  If $A$ is a $\Gamma$-algebra with  \SKFP, then the difference $1^\Gamma_A\otimes [\alpha]^\Gamma-n 1^\Gamma_A\in KK^\Gamma(A,A)$ vanishes in $KK_\R^\Gamma(A,A)$, and thus has a lift under the boundary map 
$$
\partial:KK_{\R/\Z}^{1,\Gamma}(A,A)\longrightarrow KK^\Gamma(A,A) 
$$
of the Bockstein sequence.
Our main construction, Theorem \ref{Strongrhoconstruction}, is in fact a transgression of $[\alpha]^\Gamma$, {\it i.e.} a canonical lift $\rho_\alpha^A\in KK_{\R/\Z}^{1, \Gamma}(A,A)$ that we call the \emph{rho class} associated to $A$ and $\alpha$. It generalizes the APS class and it is additive with respect to direct sum of representations; its behavior with respect to tensor products is also easy to describe. Furthermore $\rho_\alpha^A$ is functorial with respect to the algebra $A$ and, more generally, with respect to the action of the $KK^\Gamma$-groups by Kasparov product. 

\medskip To complete the construction, it is then natural to exhibit classes of algebras satisfying \SKFP.
\begin{itemize}
\item The first example comes from the \emph{Mishchenko bundle}, {\it i.e.} the already mentioned cocompact regular covering space $\widetilde V$: $C_0(\tilde V)$ satisfies \SKFP. In this case, we compare the rho class $\rho^{C_0(\tilde V)}_\alpha$ with the element $[\alpha]_{APS}$.
\item Based on the example of the Mishchenko bundle, we prove that free and proper $\Gamma$-algebras in the sense of Kasparov satisfy \SKFP. 
\item Moreover, \SKFP\ is obviously invariant under $KK^\Gamma$-(sub)equivalence. It follows that if $\Gamma$ is torsion free and satisfies the $KK^\Gamma$-form of the Baum-Connes conjecture, then every $\Gamma$-algebra satisfies \SKFP.
\item If $\Gamma $ is torsion free and has a $\gamma$ element in the sense of Kasparov, then we can construct in this way the $\gamma$ part of $\rho^\C_\alpha$, and therefore of $\rho^A_\alpha$ for any $\Gamma$-algebra $A$.
\item Using a result of Guentner-Higson-Weinberger (\cite{Weinberger et al}), we moreover deduce a construction of $\rho^\C_\alpha$ whenever $\alpha(\Gamma)$ has no torsion.
\end{itemize}

\medskip Finally, we state a weakening of \SKFP, called the \KFP, saying that the image of $[{\bf tr}]$ by Kasparov's descent morphism $j_\Gamma$ in $KK_\R(A\rtimes \Gamma,A\rtimes \Gamma)$ is the unit element. Under this condition, we construct a weaker rho class  $\hat \rho_\alpha ^A\in KK^1_{\R/\Z}(A\rtimes \Gamma,A\rtimes \Gamma)$. We show that  these two constructions are related:  if $A$ satisfies  \SKFP, then $\hat \rho_\alpha ^A=j_\Gamma(\rho_\alpha ^A)$.

\medskip Here is a summary of our paper:\begin{itemize}
\item In the first section, we construct the $KK$-theory with coefficients $\R$ and $\R/\Z$. We also discuss a few other $K$-theoretic constructions as the torus algebra of a pair of maps $\varphi_i:A\to B$ and the corresponding six term exact sequence in (equivariant) $KK$-theory.
\item In the second section we introduce property (KFP) and construct the rho element associated to a finite dimensional unitary representation.
\item In the third section we discuss examples of $\Gamma$-algebras with property (KFP).
\item Finally, in section 4, we introduce a weakening of property (KFP) and construct the corresponding weak rho class.
\end{itemize}
        
\section{$K$-theoretic constructions}

\subsection{Some conventions}

We will freely use Kasparov's $KK$-groups and notation from \cite{Kas1, Kas2}.

Let us fix a few conventions that will be used throughout the text:

\begin{itemize}
\item In what follows, by trace on a $C^*$-algebra we will mean a finite (positive) trace. 

\item All traces on von Neumann algebras that we consider, and all morphisms are assumed to be normal.
\item The suspension of a $C^*$-algebra $A$ is $SA=C_0((0,1),A)$.

\item If $A$ is a $C^*$-algebra, we denote by $1_A$ the unit element of the ring $KK(A,A)$. If $A$ is a $\Gamma$-algebra, the unit element of the ring $KK^\Gamma(A,A)$ will be denoted by $1_A^\Gamma$. If $u:A\to B$ is an equivariant morphism, we will denote by $[u]^\Gamma\in KK^\Gamma(A,B)$ its class: $[u]^\Gamma=u^*(1^\Gamma_B)=u_*(1^\Gamma_A)$.

\item More generally, if $E$ is an equivariant Hilbert $B$-module endowed with a left action of $A$ through an equivariant morphism $A\to \mathcal K (E)$, we will denote by $[E]^\Gamma$ the corresponding class in $KK^\Gamma(A,B)$.

\item  In what follows, $\otimes$ means minimal tensor product. Note however that we could do the construction below using maximal tensor products (and normal tensor products wherever von Neumann algebras are involved) as well.

However, if $\Gamma$ is a locally compact group, $C^*\Gamma$ will denote the \emph{full} group $C^*$-algebra of $\Gamma$.

\item If $u\in KK^\Gamma(A_1,B_1)$ and $v\in KK^\Gamma(A_2,B_2)$, we will denote by $u\otimes v\in KK^\Gamma(A_1\otimes A_2,B_1\otimes B_2)$ their external $KK$-product.

\item In particular, if $x\in KK^\Gamma(A,B)$ and $D$ is a $\Gamma$-algebra $1^\Gamma_D\otimes x\in KK^\Gamma(D\otimes A,D\otimes B)$ is the element noted $\tau_D(x)$ in \cite{Kas1}.

\end{itemize}

\begin{rem}
 In this paper, we deal with $KK$-theory in connection with  von Neumann algebras. A von Neumann algebra is (almost) never a separable $C^*$-algebra.

On the other hand, recall from \cite[Remark 3.2]{Sk2} that if $A$ and $B$ are $C^*$-algebras with $A$ separable then $KK(A,B)$ is the direct limit over the separable subalgebras of $B$. One then should define $KK(A,B)$ with $A$ not separable as the projective limit over all separable subalgebras of $A$. Kasparov's $KK$-group maps to this new $KK(A,B)$ and, with this definition, the Kasparov product $\otimes_D:KK(A,D)\times KK(D,B)\to KK(A,B)$ is defined for any triple of $C^*$-algebras with no assumptions of separability of any kind, as well as the more general $\otimes_D:KK(A_1,B_1\otimes D)\times KK(D\otimes A_2,B_2)\to KK(A_1\otimes A_2,B_1\otimes B_2)$.
\subsection{Torus algebra}\label{torus}
Let $A,B$ be $C^*$-algebras and $\varphi_0,\varphi_1:A\to B$ two $*$-homomorphisms. Define the corresponding \emph{torus algebra} to be  $$\cT(\varphi_0,\varphi_1)=\{(a,f)\in A\times B[0,1];\ f(0)=\varphi_0(a) \ \hbox{and}\ f(1)=\varphi_1(a)\}.$$
\end{rem}

We will use the following straightforward functorial properties:
\begin{enumerate}
\item if $\varphi_0,\varphi_1:A \longrightarrow B$ then $\mathcal{T}(\varphi_0,\varphi_1)\otimes D =\mathcal{T}({\varphi_0\otimes \operatorname{id}_D\,,\,\varphi_1\otimes  \operatorname{id}_D})$.
\item Every $f:D\longrightarrow A$ induces $f^*:\mathcal{T}({\varphi_0\circ f,\varphi_1 \circ f})\longrightarrow\mathcal{T}(\varphi_0,\varphi_1) $.
\item  Every $f:B\longrightarrow D$ induces $f_*:\mathcal{T}(\varphi_0,\varphi_1)\longrightarrow \mathcal{T}({f\circ \varphi_0,f \circ \varphi_1})$.
\item Let $\varphi_0,\varphi_1,\psi_0,\psi_1:A\to B$; if $\varphi_i$ is homotopic to $\psi_i$, the corresponding torus algebras $\mathcal{T}(\varphi_0,\varphi_1)$ and $\mathcal{T}(\psi_0,\psi_1)$ are homotopy equivalent. More precisely, if $\Phi_0,\Phi_1:A\to B[0,1]$ are morphisms joining $\varphi_0$ to $\psi_0$ and $\varphi_1$ to $\psi_1$ ({\it i.e} such that $(\Phi_i(a))(0)=\varphi_i(a)$ and $(\Phi_i(a))(1)=\psi_i(a)$ for all $a\in A$), we construct a homotopy equivalence $\mathcal{T}(\varphi_0,\varphi_1)\to \mathcal{T}(\psi_0,\psi_1)$ mapping $(a,f)$ to $(a,g)$, where $$g(t)=
\begin{cases}
\Phi_0(1-3t)&\hbox{if}\quad 0\le t\le 1/3\\
f(3t-1)&\hbox{if}\quad 1/3\le t\le 2/3\\
\Phi_1(3t-2)&\hbox{if}\quad 1/3\le t\le 2/3.
\end{cases}
$$
\end{enumerate}

We have an exact sequence $\xymatrix{0 \ar[r] & SB\ar[r]^{h\;\;\;\;} &\mathcal T(\varphi_0,\varphi_1)\ar[r]^{\;\;\;\;p} & A\ar[r]& 0}$ (with completely positive lifting), where $p:\mathcal T(\varphi_0,\varphi_1)\to A$ is the morphism $(a,f)\mapsto a$ and $h:SB\to \mathcal T(\varphi_0,\varphi_1)$ is the map $f\mapsto (0,f)$.

 The corresponding connecting map is $[\varphi_0]-[\varphi_1]$. We then have:

\begin{prop}\label{torexS}
Let $\Gamma$ be a discrete group, and let $A,B,C,D$ be $\Gamma$-algebras. Let $\varphi_0,\varphi_1:A\to B$ be $\Gamma$-equivariant morphisms. We have six term exact sequences $$\xymatrix{KK^\Gamma(C,D\otimes SB)\ar[r]^{h_*\;\;\;}&KK^\Gamma(C,D\otimes \mathcal T(\varphi_0,\varphi_1))\ar[r]^{\;\;\;p_*}&KK^\Gamma(C,D\otimes A)\ar[d]^{(\varphi_0)_*-(\varphi_1)_*}\\
KK^\Gamma(C,D\otimes SA)\ar[u]^{(\varphi_0)_*-(\varphi_1)_*}&KK^{1,\Gamma}(C,D\otimes \mathcal T(\varphi_0,\varphi_1))\ar[l]_{p_*\;\;}&KK^\Gamma(C,D\otimes B).\ar[l]_{\;\;\;\;\; \;\;h_*}}\ $$

There is also an analogous exact sequence with reversed arrows for $A,B$ and $\cT(\varphi_0,\varphi_1)$ on the left hand side.
\begin{proof}
Although this follows from \cite{Kas1, Sk} (at least in the non equivariant setting), we wish to outline here that this is a Puppe type exact sequence and holds automatically for every homotopy functor (see \cite{CuSk}). In particular, it holds for the equivariant $KK^\Gamma$-theory as we will use it.

\begin{description}
\item [Exactness at $A$] This is in a sense just tautological; for $x\in  KK^\Gamma(C,D\otimes A)$, we have $(\varphi_0)_*(x)=(\varphi_1)_*(x)$ if and only if these elements are homotopic \emph{i.e.} if $x$ is in $p^*(KK^\Gamma(C,D\otimes \mathcal T(\varphi_0,\varphi_1)))$.

\item [Exactness at $\cT$] The mapping cone of $p$ is $\cT(\varphi_0\circ e_0,\varphi_1\circ e_0)$ where $e_0:C_0([0,1),A)\rightarrow A$ is the evaluation at $0$. As $e_0$ is homotopic to the evaluation at $1$, which is the $0$ map, this torus algebra is homotopy equivalent to $\cT(\varphi_0\circ e_1,\varphi_1\circ e_1)=SB\oplus C_0([0,1),A)$ whence to $SB$. More precisely, the inclusion of the kernel $SB$ of $p$ in the mapping cone of $p$ is a homotopy equivalence. Use then the Puppe sequence which holds in full generality.

\item [Exactness at $B$] The mapping cone of $p$ is $\cT(\widetilde \varphi_0,\widetilde \varphi_1)$ where $\widetilde \varphi_i:SA\to C_0([0,1),B)$ are $f\mapsto \varphi_i\circ f$. These maps are again homotopic to $0$, whence $\cT(\widetilde \varphi_0,\widetilde \varphi_1)$ is homotopy equivalent to $SA$. \qedhere
\end{description}
\end{proof}
\end{prop}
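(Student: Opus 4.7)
The plan is to recognize this six-term sequence as a Puppe-type exact sequence, so that it holds automatically once we verify that $KK^\Gamma(C, D\otimes -)$ is a homotopy invariant, split-exact functor on the category of $\Gamma$-$C^*$-algebras (which is standard, cf.~\cite{CuSk}). It then suffices to identify the relevant mapping cones, up to equivariant homotopy equivalence, with the prescribed shifted targets, and feed these identifications into the general Puppe machinery.

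\emph{Exactness at $A$} is essentially tautological from the universal property packaged in the torus construction: a class $x \in KK^\Gamma(C, D\otimes A)$ factors through $p_*$ precisely when $(\varphi_0)_*(x)$ and $(\varphi_1)_*(x)$ become homotopic in $KK^\Gamma(C, D\otimes B)$, which is exactly the condition $((\varphi_0)_* - (\varphi_1)_*)(x) = 0$.

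\emph{Exactness at $\cT$ and at $B$} reduces to a mapping cone computation. The mapping cone of $p$ unfolds as the torus algebra $\cT(\varphi_0 \circ e_0, \varphi_1 \circ e_0)$, where $e_0 : C_0([0,1), A) \to A$ is evaluation at the closed endpoint. Since $e_0$ is homotopic through $\Gamma$-equivariant $*$-homomorphisms to the zero map (by sliding the evaluation point toward the open end), the functoriality property (4) of torus algebras yields a homotopy equivalence with $\cT(0,0) = SB \oplus C_0([0,1), A)$, and the second summand being contractible this collapses to $SB$. Symmetrically, the mapping cone of $h$ is the torus algebra $\cT(\widetilde\varphi_0, \widetilde\varphi_1)$ with $\widetilde\varphi_i : SA \to C_0([0,1), B)$ given by post-composition with $\varphi_i$; both maps are again nullhomotopic, yielding a cone homotopy equivalent to $SA$. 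Feeding these two identifications into the Puppe sequence produces the asserted six-term diagram.

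The \emph{main obstacle} is bookkeeping: one must check that the Puppe boundary produced by this machinery coincides, with the correct sign, with $(\varphi_0)_* - (\varphi_1)_*$. This comes down to tracing the identifications above through the connecting map of the extension $0 \to SB \to \cT(\varphi_0, \varphi_1) \to A \to 0$ and recognizing its class as $[\varphi_0] - [\varphi_1]$, as noted just before the proposition. The reversed-arrow sequence on the first variable then follows from the same Puppe argument, applied to the contravariantly homotopy invariant, split-exact functor $KK^\Gamma(-, D)$ in the first slot.
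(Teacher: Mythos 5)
Your proposal is correct and follows essentially the same route as the paper: exactness at $A$ is the same tautological homotopy argument, and exactness at $\cT$ and at $B$ are obtained by identifying the mapping cones of $p$ and of $h$ with $SB$ and $SA$ respectively, then invoking the Puppe sequence for the homotopy-invariant functor $KK^\Gamma(C,D\otimes-)$ (and $KK^\Gamma(-,D)$ for the reversed sequence). Your explicit attribution of the second cone computation to $h$ rather than $p$, and your flagging of the sign check for the connecting map, only make explicit what the paper states more tersely.
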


\begin{rems}\label{commuT}
\begin{enumerate}
\item \label{MoritaT} Let $\varphi_0,\varphi_1:A\to B$ be morphisms and $u_0,u_1\in B$ unitaries. Then we have a natural Morita equivalence between the torus algebras $\mathcal T(\varphi_0,\varphi_1)$ and $\mathcal T({\rm Ad}_{u_0}\circ \varphi_0,{\rm Ad}_{u_1}\circ \varphi_1)$. Indeed the space $E=\{(a,f)\in A\times B[0,1];\ f(0)=u_0\varphi_0(a);\ f(1)=u_1\varphi_1(a)\}$ is the desired Morita equivalence bimodule. 

If $[E]$ denotes the associated element in $KK(\mathcal T({\rm Ad}_{u_0}\circ \varphi_0,{\rm Ad}_{u_1}\circ \varphi_1),\mathcal T(\varphi_0,\varphi_1))$, we have 
$$p_*[E]=[p'_*]\ ,\;\; \ \ (h')^*[E]=[h]$$ where $p:\mathcal T(\varphi_0,\varphi_1)\to A$ and $p':\mathcal T({\rm Ad}_{u_0}\circ \varphi_0,{\rm Ad}_{u_1}\circ \varphi_1)\to A$ denote the evaluation morphisms $(a,f)\mapsto a$, and $h:SB\to \mathcal T(\varphi_0,\varphi_1)$ and $h':SB\to \mathcal T({\rm Ad}_{u_0}\circ \varphi_0,{\rm Ad}_{u_1}\circ \varphi_1)\to A$ denote the morphisms $f\mapsto (0,f)$.

\item \label{T-to-Z}
Let $A_0$ and $A_1$ be $C^*$-algebras with $\varphi_0:A_0\to B$ and $\varphi_1:A_1\to B$ be two morphisms. Recall that the associated double cylinder is defined by $Z(\varphi_0,\varphi_1)=\{(a_0,a_1,f)\in A_0\times A_1\times B[0,1];\ f(0)=\varphi_0(a_0) \ \hbox{and}\ f(1)=\varphi_1(a_1)\}$.  Given unitaries $u_0,u_1\in B$, there is a canonical Morita equivalence $Z(\varphi_0,\varphi_1)$ and $Z({\rm Ad}_{u_0}\circ \varphi_0,{\rm Ad}_{u_1}\circ \varphi_1)$. In fact, we may put $A=A_0\times A_1$ and $\psi:A\to B$, $\psi_i(a_0,a_1)=\varphi_i(a_i), i=0,1\}$; then   $Z(\varphi_0,\varphi_1)$ identifies with $\mathcal T(\psi_0,\psi_1)$.

\end{enumerate}
\end{rems}

\subsection{$KK$-theory with real coefficients}

It is natural to try to define the $KK$-theory with real coefficients as $KK(A,B\otimes M)$, where $M$ is a $\rm{II}_1$-factor. On the other hand, it is not clear which $M$ we should choose. 

We give here a natural construction of $KK_{\R}$ taking into account \emph{all possible} such $M$.

\bigskip
Take a separable Hilbert space $H$ and consider the set $\mathcal{F}_{\rm{II}_1}(H)$  of all $\rm{II}_1$-factors acting on $H$. Every factor $M\in \mathcal{F}_{\rm{II}_1}(H)$ is endowed with its unique tracial state. Recall the following properties:
\begin{enumerate}
\item Every finite von Neumann algebra with trace can be embedded in a trace preserving way into a $\rm{II}_1$ factor - just take for instance a free product with a $\rm{II}_1$-factor  (\cite{Vo}, \cite[Theorem 4.4]{Po}, \cite{Dy}). 

\item For the same reason, a finite number of $\rm{II}_1$ factors can be simultaneously embedded in a trace preserving way into a $\rm{II}_1$ factor - just take their (tracial) free product.

\item Given two unital morphisms  $\varphi,\psi \colon M_1\to M_2$, there exists a $\rm{II}_1$-factor $M_3$, namely (a $\rm{II}_1$-factor containing)  the corresponding von Neumann HNN extension (see \cite[\S 3]{FV}) and a unital embedding $\chi\colon M_2\to M_3$ such that $\chi \circ \varphi$ and $\chi\circ \psi$ differ by an inner automorphism and thus define the same element of $KK(M_1,M_3)$. 
\end{enumerate}

\begin{dfn}
For $A$ and $B$ separable $C^*$-algebras 
the diagram $\mathcal{F}_{\rm{II}_1}(H)\ni M \longmapsto KK(A,B\otimes M)$ with values groups has a well defined limit
that we take as a definition of:
 \begin{equation}\label{realkk}
 KK_{\mathbb{R}}(A,B):=\lim_{M\in \mathcal{F}_{\rm{II}_1}(H)}KK(A,B\otimes M).\end{equation}
 \end{dfn}

\begin{construction}\label{constrkkr}
To define this limit it is convenient to build a direct system:
 \begin{itemize}
 \item for every $M$ from $\mathcal{F}_{\rm{II}_1}(H)$ define the group $\underline{KK}(A,B)[M]$ to be the quotient of $KK(A,B\otimes M)$ with respect to the 
 subgroup of all elements $z\in KK(A,B\otimes M)$ which become zero under some 
 unital embedding $\varphi:M\rightarrow N$. It is a well defined equivalence relation by the property $(3)$ above.
 
 \item Every unital embedding $\varphi \colon  M_1\hookrightarrow M_2$ of ${\rm II}_1$-factors induces a group homomorphism 
 $\varphi_* \colon \underline{KK}(A,B)[ M_1]\longrightarrow \underline{KK}(A,B)[M_2]$ 
 sending the equivalence class of an element $[x]\in KK(A,B\otimes M_1)$ to the equivalence class of $\varphi_*[x]\in KK(A,B\otimes M_2)$. This map is well defined by  property $(2)$ and does not depend on $\varphi$ by property $(3).$ 
\end{itemize}
We define a relation of partial pre-order in $\mathcal{F}_{\rm{II}_1}(H)$ saying that $M_1\prec M_2$ if there is a tracial embedding $M_1 \hookrightarrow M_2$. The partially pre-ordered set $(\mathcal{F}_{\rm{II}_1}(H),\prec)$ is directed - by property (1) and the map $M\mapsto \underline{KK
}(A,B)[M]$ is a direct system. 
It follows immediately that the unique limit $L$ of $M\mapsto \underline{KK}(A,B)[M]$ is a limit for the diagram $M\mapsto KK(A,B\otimes M)$.
\end{construction}

\begin{rems}\label{rems1.1}
\noindent \begin{enumerate}

\item When $A$ and $B$ are in the bootstrap class, then $KK(A,B\otimes M)$ does not depend on the $\rm{II}_1$-factor $M$ and is isomorphic to $\Hom(K(A),K(B)\otimes \R)$.
In other words,  for any $\rm{II}_1$-factor $M$ the map $KK(A,B\otimes M)\longrightarrow KK_{\mathbb{R}}(A,B)$ is an isomorphism. 

\item If $A$ and $B$ are not in the bootstrap category, then there is no best choice for $M$. Indeed, the hyperfinite $\rm{II}_1$-factor is of course the smallest element of $(\mathcal{F}_{\rm{II}_1}(H),\prec)$. By \cite{Oz}, there is no biggest element in $\mathcal{F}_{\rm{II}_1}(H)$, and therefore there is no natural way to choose $M$ and define $KK_{\R}(A,B)=KK(A,B\otimes M)$. 

 Note also that if $M$ is a \emph{big} ${\rm II}_1$-factor acting on a non separable Hilbert space, then for every separable $C^*$-algebra $A$, and $x\in KK(A,B\otimes M)$, there exists (by \cite[Remark 3.2]{Sk2}) a separable subalgebra $N_1\subset B\otimes M$ and $x_1\in KK(A,N_1)$ whose image is $x$. We may then construct separable subalgebras $B_1\subset B$ and $D_1\subset M$ and hence a (weakly) separable ${\rm II}_1$-subfactor $M_1\subset M$ such that $N_1\subset B_1\otimes D_1\subset B_1\otimes M_1\subset B\otimes M$. Let then $\tilde x_1$ denote the image of $x_1\in KK_{\R}(A,B)$. If $N_2,x_2$ is another such choice, we may find a separable $N_3\subset B\otimes M$ where $x_1$ and $x_2$ coincide and therefore a separable subfactor $M_3\subset M$ such that $N_3\subset B\otimes M_3$. It follows that $\tilde x_1=\tilde x_2$. We thus obtain a natural morphism $KK(A,B\otimes M)\to KK_{\R}(A,B)$.
 
 \item If we wish to avoid the use of non separable algebras and the method of \cite{Sk2} to treat the corresponding $KK$-products, we may also replace this category of ${\rm II}_1$-factors by the category of separable $C^*$-algebras with tracial states, the morphisms in this category being the trace preserving $*$-homomorphisms. Define then $KK_\R(A,B)$ as the limit of $KK(A,B\otimes D)$ when $D$ runs over all tracial separable $C^*$-algebras (acting on a given Hilbert space). We may form an inductive system $\underline {KK}(A,B)[D,\tau]$ as the quotient of $KK(A,B\otimes D)$ by the subgroup of $x$ such that there exists a $C^*$-algebra $D_1$ with a tracial state $\tau_1$ a trace preserving morphism $j:D\to D_1$ such that $j_*(x)=0$.
 
It follows from \cite{Sk2} that the two definitions coincide.

\item The method used to show the existence of the limit in construction \ref{constrkkr} can be summarized as follows: assume $\mathcal{C}$ is a (small) category and $\mathcal{F}:\mathcal{C}\longrightarrow \operatorname{Grp}$ is a functor such that the following properties are satisfied:
\begin{itemize}
\item
for every $A,B \in \operatorname{Ob}_{\mathcal{C}}$ there exists some $C\in \operatorname{Ob}_{\mathcal{C}}$ and arrows $A\longrightarrow C$ and $B\longrightarrow C.$
\item
For every couple of arrows $f,g:A \longrightarrow B$ there exists some $C\in \operatorname{Ob}_{\mathcal{C}}$ and an arrow $h:B\longrightarrow C$ 
with $\mathcal{F}(h\circ f)=\mathcal{F}(h\circ g).$
\end{itemize}
Then we have shown that there is a natural transformation 
$\mu:\mathcal{F}\longrightarrow \underline{\mathcal{F}}$ 
such that $\underline{\mathcal{F}}$ is a directed system and $\mathcal{F}$ has a unique limit which is the unique limit of $\underline{\mathcal{F}}$.
\end{enumerate}
\end{rems}

\subsection{Traces and $KK$-theory with real coefficients}\label{traces}

A trace on a $C^*$-algebra $D$ can be thought of as a generalized morphism to an abstract $\rm{II}_1$-factor. It therefore gives rise to a natural element of $KK_{\R}(D,\C)$.

\begin{dfn}
\label{tau*}If $D$ is a separable unital $C^*$-algebra and $\tau$ is a trace on $D$, then there is a $\rm{II}_1$-factor $M$ and a finitely generated projective module $E$ on $M$ with a trace preserving morphism $f_\tau:D\to \cL(E)$ (with von Neumann dimension $\dim_M(E)=\tau(1)$); in this way, we define an element $[\tau]\in KK_{\R}(D,\C)$; using the morphism $f_\tau:D\to \cL(E)$ we also obtain for every pair $(A,B)$ of $C^*$-algebras a morphism $$\tau_*:KK(A,B\otimes D)\longrightarrow KK_{\mathbb{R}}(A,B).$$ In particular, we have  $[\tau]=\tau_*(1_D)\in KK_{\R}(D,\C)$.
\end{dfn}

\subsection{The Kasparov product}\label{KKprod}

Let $A,B,D$ be $C^*$-algebras.

 If $x\in KK_{\mathbb{R}}(A,D)$ and $y\in KK_{\mathbb{R}}(D,B)$, there exist $\rm{II}_1$-factors $M$ and $N$ such that $x$ is  the image of $x_0\in KK(A,D\otimes M)$ and $y$ is  the image of $y_0\in KK(D,B\otimes N)$. We then may form the $KK$-product and obtain an element $x_0\otimes_D y_0\in KK(A,B\otimes M\otimes N)$. We may then map the minimal tensor product $M\otimes N$ into the corresponding von Neumann tensor product and pass to the limit $KK_{\R}(A,B)$. We obtain an element $x\otimes_D y\in KK_{\mathbb{R}}(A,B)$ which only depends on $x$ and $y$ and not on $(M,x_0)$ and $(N,y_0)$.

This Kasparov product immediately extends to a product $$\otimes_D:KK_{\mathbb{R}}(A_1,B_1\otimes D)\times KK_{\mathbb{R}}(D\otimes A_2,B_2)\to KK_{\mathbb{R}}(A_1\otimes A_2,B_1\otimes B_2)$$
which has all the usual properties of the Kasparov product (bilinearity, functoriality, associativity...).

\begin{rem}
Let  $\tau$ be a trace on $D$; by naturality of the Kasparov product, the map $\tau_*:KK(A,B\otimes D)\longrightarrow KK_{\mathbb{R}}(A,B)$ is given by the product with $[\tau]\in KK_{\R}(D,\C)$.
\end{rem}

\begin{rem}
For every $\rm{II}_1$-factor $M$, the group $KK(\C,M)$ is canonically isomorphic with $\R$. It follows that the ring $KK_{\R}(\C,\C)$ is naturally isomorphic with $\R$. We deduce that $KK_{\R}(A,B)$ is naturally a real vector space - and that the $KK_{\R}$-product is $\R$-bilinear. If $\tau $ is a trace on a $C^*$-algebra $D$ and $ s\in \R_+$, then $[s\tau]=s[\tau]\in KK_{\R}(D,\C)$.
\end{rem}

\subsection{$KK$-theory with $\R/\Z$ coefficients.} For every $\rm{II}_1$-factor $M$ denote  by   $i_M:\C\to M$ the unital inclusion. For the $\R/\mathbb{Z}$-coefficients we can similarly define:

\begin{dfn} Let $A,B$ be $C^*$-algebras we put:
\begin{equation}\label{kcone}
KK_{\bullet,\mathbb{\R}/\mathbb{Z}}(A,B):=\lim_{M\in \mathcal{F}_{\rm{II}_1}(H)} KK_{\bullet+1}(A,B\otimes C_{i_M}).
\end{equation}
\end{dfn}

As above, this is a limit along the partially ordered directed set $(\mathcal{F}_{\rm{II}_1}(H),\prec)$.

The mapping cone exact sequence gives rise to a six term Bockstein change of coefficients exact sequence $$...\to KK(A,B)\to KK_{\mathbb{R}}(A,B)\to KK_{\mathbb{R}/\mathbb{Z}}(A,B)\to...$$

\begin{rem}\label{vartheta}
Let $A,B$ be $C^*$-algebras and $M$ be a ${\rm II}_1$-factor. Denote by $i_M^B:B\to B\otimes M$ the map $x\mapsto x\otimes 1_M$. We have of course $C_{i_M^B}=B\otimes C_{i_M}$ and $Z(i_M^B,i_M^B)=B\otimes Z(i_M,i_M)$. Using the split exact sequence $$0\to C_{i_M^B}\to Z(i_M^B,i_M^B) \to B\to 0,$$ we find $KK(A,B\otimes Z(i_M,i_M))=KK(A,B)\oplus KK(A,B\otimes C_{i_M})$. We thus get a morphism $\vartheta :KK(A,B\otimes Z(i_M,i_M))\to KK^1_{\R/\Z}(A,B)$. Composing with the connecting map $\partial$ of the Bockstein change of coefficients exact sequence we obtain a map $\partial \circ \vartheta =(p_0)_*-(p_1)_*:KK(A,B\otimes Z(i_M,i_M))\to KK(A,B)$, where $p_i:B\otimes Z(i_M,i_M)=Z(i_M^B,i_M^B)\to  B$ are the maps $(b_0,b_1,f)\mapsto b_i$.
\end{rem}

\subsection{$KK^{\Gamma}$ with coefficients}
The above realizations of $KK_{\R}$ and $KK_{\R/\Z}$ can be extended to the equivariant setting.

\begin{dfn}
Let $\Gamma $ be a discrete group. We define $KK_{\R}^\Gamma(A,B)$ and $KK_{\R/\Z}^\Gamma(A,B)$  to be respectively the limit of $KK^\Gamma(A,B\otimes M)$ and $KK^\Gamma(A,B\otimes C_{i_M})$ where $M$ runs over the set $\mathcal{F}_{\rm{II}_1}(H)$ of ${\rm II}_1$-factors on the separable Hilbert space $H$ - with trivial $\Gamma $ action, $i_M:\C\to M$ denotes the unital inclusion and $C_{i_M}$ its mapping cone.
\end{dfn}

This is again a direct limit along the partially preorderd directed set $(\mathcal{F}_{\rm{II}_1}(H),\prec)$.

\medskip Recall from \cite{Kas2} that Kasparov descent morphism is a natural morphism $j_\Gamma:KK^\Gamma(A,B)\to KK(A\rtimes\Gamma,B\rtimes \Gamma)$. If $A,B$ are $\Gamma$-algebras and $M$ a ${\rm II}_1$-factor (with trivial action of $\Gamma$), composing the descent morphism together with the natural morphism $(B\otimes M)\rtimes \Gamma\to (B\rtimes \Gamma)\otimes M$ we obtain a morphism $$KK^\Gamma(A,B\otimes M)\to KK(A\rtimes\Gamma,(B\otimes M)\rtimes \Gamma)\to KK(A\rtimes\Gamma,(B\rtimes \Gamma)\otimes M).$$
This map passes to the inductive limit and defines a descent morphism with real coefficients $j_{\R}^\Gamma:KK_{\R}^\Gamma(A,B)\to KK_{\R}(A\rtimes\Gamma,B\rtimes \Gamma)$.

In the same way, taking coefficients in mapping cone algebras of inclusions $i_M:\C\to M$ and then inductive limit over $M\in \mathcal{F}_{\rm{II}_1}(H)$, we obtain a descent morphism with $\R/\Z$ coefficients $j_{\R/\Z}^\Gamma:KK_{\R/\Z}^\Gamma(A,B)\to KK_{\R/\Z}(A\rtimes\Gamma,B\rtimes \Gamma)$.

The Kasparov product $$\otimes_D:KK_{\mathbb{R}}^\Gamma(A_1,B_1\otimes D)\times KK_{\mathbb{R}}^\Gamma(D\otimes A_2,B_2)\to KK_{\mathbb{R}}^\Gamma(A_1\otimes A_2,B_1\otimes B_2)$$ is defined exactly as in Section \ref{KKprod} with all the functorial properties with respect to algebras. 

Furthermore a morphism $\Gamma_1\rightarrow \Gamma_2$ induces an obvious morphism $KK_{\R}^{\Gamma_2}(A,B) \rightarrow KK_{\R}^{\Gamma_1}(A,B).$ 

Also, the mapping cone exact sequence in equivariant $KK$-theory gives rise to a six term Bockstein change of coefficients exact sequence $$...\to KK^\Gamma(A,B)\to KK^\Gamma_{\mathbb{R}}(A,B)\to KK^\Gamma_{\mathbb{R}/\mathbb{Z}}(A,B)\to...$$

\begin{rem}\label{remequ}
In this paper, we use the equivariant $KK_{\R}$- and $KK_{\R/\Z}$-theory with respect to a discrete group $\Gamma$. Of course, the same constructions can be performed equivariantly with respect to a locally compact group or a Hopf algebra (\textit{cf.} \cite{BaajSk}) or a groupoid (\textit{cf.} \cite{LeGall}).
\end{rem}

\subsection{Some remarks on $\Gamma$-algebras}

Let $\Gamma$ be a discrete group. We end these comments with some easy observations on $\Gamma$-algebras.

\subsubsection*{Trivial action and equivariant Kasparov groups}

\begin{lem}\label{lemma1}Let $\Gamma$ be a discrete group, $A$ a $\Gamma$-algebra and let $B$ be a $C^*$-algebra endowed with a the trivial action of $\Gamma $. Then there is a canonical isomorphism  $KK^{\Gamma}(A,B)\cong KK(A \rtimes \Gamma,B)$.
\begin{proof}
Note first that by \cite[Lemma 1.11]{Sk} one can take the action of $A$ (resp. $A\rtimes \Gamma$) to be non-degenerate in the cycles defining $KK^\Gamma(A,B)$ (resp. $KK(A\rtimes \Gamma;B)$). With this in mind the rest of the proof is straightforward because $\Gamma$ acts by unitary multipliers on $A\rtimes \Gamma$. 
 \end{proof} \end{lem}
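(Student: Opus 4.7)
The plan is to exhibit the isomorphism at the level of Kasparov cycles, using the trivial $\Gamma$-action on $B$ to convert between covariant representations of $(A,\Gamma)$ and representations of the full crossed product $A\rtimes\Gamma$. Following the hint in the paper, I first invoke \cite[Lemma 1.11]{Sk} to restrict to non-degenerate cycles on either side, so that representations extend uniquely to unitary multipliers.

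Given a cycle $(E,\phi,F)$ for $KK^\Gamma(A,B)$, the trivial action on $B$ forces the $\Gamma$-action on $E$ to be by $B$-linear unitaries $u_g\in\cL(E)$, and the equivariance of $\phi$ is precisely the covariance relation $u_g\phi(a)u_g^*=\phi(g\cdot a)$. By the universal property of the full crossed product, this data produces a unique non-degenerate $*$-homomorphism $\tilde\phi\colon A\rtimes\Gamma\to\cL(E)$ that restricts to $\phi$ on $A$ and sends the canonical unitary multiplier of $g$ to $u_g$. Retaining the same $F$ defines the candidate map $\Phi\colon KK^\Gamma(A,B)\to KK(A\rtimes\Gamma,B)$; the inverse $\Psi$ is obtained analogously by setting $\phi=\tilde\phi|_A$ and extracting the unitary $u_g\in\cL(E)$ as the image, under the extension of $\tilde\phi$ to $M(A\rtimes\Gamma)$, of the canonical unitary multiplier of $g$.

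The crux is to verify that the Kasparov conditions transfer. Since $A\rtimes\Gamma$ is densely spanned by elements $\phi(a)u_g$ with $a\in A$, $g\in\Gamma$, compactness of $\tilde\phi(\cdot)(F^*-F)$ and $\tilde\phi(\cdot)(F^2-1)$ on such elements follows from the corresponding conditions for $\phi(a)$ together with the unitarity of $u_g$. For the commutator, the key identity, obtained by using the covariance $u_g\phi(a)=\phi(g\cdot a)u_g$, is
\begin{equation*}
[F,u_g]\phi(a)=Fu_g\phi(a)-u_gF\phi(a)=-(u_gFu_g^*-F)\,\phi(g\cdot a)\,u_g,
\end{equation*}
which is compact by the $\Gamma$-equivariance of $F$ modulo compacts on $\phi(A)$; consequently $[F,\phi(a)u_g]=[F,\phi(a)]u_g+\phi(a)[F,u_g]$ is a sum of two compact operators. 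Reading the same identity backwards shows that, starting from a cycle for $KK(A\rtimes\Gamma,B)$, compactness of $[F,u_g]\phi(a)$ forces the equivariance condition $(u_gFu_g^*-F)\phi(b)\in\cK(E)$ required for $KK^\Gamma(A,B)$.

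It is then immediate that $\Phi\circ\Psi$ and $\Psi\circ\Phi$ are the identity on cycles, and both constructions manifestly preserve unitary equivalence and operator homotopy; hence they descend to inverse isomorphisms on Kasparov groups. No step poses a genuine obstacle, but some care is needed in the non-unital case, where the $u_g$ lie in $M(A\rtimes\Gamma)$ rather than in $A\rtimes\Gamma$ itself: it is precisely the non-degeneracy reduction of \cite[Lemma 1.11]{Sk} that makes $\tilde\phi(u_g)\in\cL(E)$ well-defined by extending $\tilde\phi$ to multipliers.
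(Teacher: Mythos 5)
Your proof is correct and is essentially the argument the paper leaves implicit: the non-degeneracy reduction of \cite[Lemma 1.11]{Sk} lets the covariant pair $(\phi,u)$ integrate to the full crossed product and, conversely, lets a cycle for $KK(A\rtimes\Gamma,B)$ be evaluated on the unitary multipliers $u_g$, with the Kasparov conditions transferring via the commutator identity you display. The only detail you gloss over is that compactness of $\phi(a)u_g(F^2-1)$ and $\phi(a)u_g(F-F^*)$ uses the equivariance of $F$ modulo compacts (via $u_g(F^2-1)=((gF)^2-1)u_g$, etc.) and not merely the unitarity of $u_g$, but this follows by the same manipulation as your commutator computation.
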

 
\begin{rem}
Note that the identification in Lemma \ref{lemma1} is the composition $$KK^{\Gamma}(A,B)\overset{j_\Gamma}\longrightarrow KK(A \rtimes \Gamma,B\otimes C^*\Gamma)\overset{(\id\otimes \varepsilon)_*}\longrightarrow KK(A \rtimes \Gamma,B)$$ where $\varepsilon :C^*\Gamma\to \C$ is the trivial representation.
\end{rem}
 
The isomorphism $KK^{\Gamma}(A,B)\to KK(A \rtimes \Gamma,B)$ can be expressed as the composition of Kasparov's descent morphism $j_\Gamma:KK^{\Gamma}(A,B)\to KK(A \rtimes \Gamma,B\rtimes \Gamma)$ with the morphism $B\rtimes \Gamma=B\otimes_{max}C^*\Gamma\to B$ associated with the trivial representation.

\subsubsection*{Inner action and equivariant Morita equivalence}

\begin{lem}\label{lemma2}Let $\Gamma$ be a discrete group, $A$ a $C^*$-algebra and $g\mapsto u_g$ a morphism of $\Gamma $ to the group of unitary multipliers of $A$. Let $\beta$ be the corresponding inner action of $\Gamma $ on $A$ ($\beta(a)=u_gau_g^*$). The $\Gamma$-algebras $A$ endowed with the trivial action and $A$ endowed with the action $\alpha$ are Morita equivalent in an equivariant way.
\begin{proof}
We denote by $A_1$ and $A_\beta$ the algebra $A$ when endowed with the trivial action and the action $\beta$, respectively. Let $E$ be the Hilbert $A_1$-module $A$ endowed  with the action of $\Gamma$ given by $g.x=u_gx$. As $u_g(xy)=\beta _g(x)u_gy$, the action of $A$ by left multiplication on $E$ is equivariant: it follows that $E$ is a $\Gamma$-equivariant Morita equivalence between $A_\beta$ and $A_1$. \end{proof} \end{lem}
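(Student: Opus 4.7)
My plan is to exhibit an explicit equivariant Morita equivalence bimodule. Write $A_1$ for $A$ with the trivial $\Gamma$-action and $A_\beta$ for $A$ with the action $\beta_g(a)=u_g a u_g^*$. Take as underlying space $E=A$, viewed as the standard Morita self-equivalence: right Hilbert $A_1$-module structure given by right multiplication and inner product $\langle x,y\rangle_{A_1}=x^*y$, and left $A$-action by left multiplication.

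The key point is to equip $E$ with the $\Gamma$-action $g\cdot x = u_g x$ and to verify three things. First, this action is compatible with the Hilbert module structure: $\langle g\cdot x,g\cdot y\rangle_{A_1}= (u_gx)^*(u_gy)=x^*u_g^*u_gy=x^*y$, which is the trivial action on $A_1$, and it is compatible with the right $A_1$-module structure since $(g\cdot x)y=u_g xy=g\cdot(xy)$. Second, the left action of $A_\beta$ on $E$ is equivariant: for $a\in A_\beta$ and $x\in E$ one computes $\beta_g(a)\cdot(g\cdot x)=u_gau_g^*u_gx=u_g(ax)=g\cdot(ax)$. Third, $E$ implements a full Morita equivalence between $A$ and $A$, which is automatic from the standard Morita self-equivalence of $A$.

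I do not foresee a genuine obstacle here; the only mild subtlety is to make sense of the formula $g\cdot x=u_gx$ when $u_g\in\mathcal M(A)$ is a multiplier (so $x\mapsto u_gx$ is understood in the multiplier sense) and to check that this still gives a strictly continuous, hence admissible, action on the Hilbert module. Once this is noted, the verifications above are purely algebraic and immediate, and together they show that $E$ is a $\Gamma$-equivariant $(A_\beta,A_1)$-Morita equivalence bimodule, proving the lemma.
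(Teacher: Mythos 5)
Your proposal is correct and is essentially the paper's own proof: the same bimodule $E=A$ with $\Gamma$-action $g\cdot x=u_gx$, the same identity $u_g(xy)=\beta_g(x)u_gy$ giving equivariance of the left action, and the standard self-Morita-equivalence of $A$. You merely spell out the compatibility checks (inner product, right module structure, multiplier subtlety) that the paper leaves implicit.
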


\subsection{Kasparov's descent and coproducts}
Let $A$ be a $\Gamma$-algebra. Denote by $\delta^A:A\rtimes \Gamma \longrightarrow (A \rtimes \Gamma)\otimes C^*\Gamma$ the coaction map. Recall that $\delta^A(au_g)=au_g\otimes u_g$ for $a\in A\subset A\rtimes \Gamma$ and $g\in \Gamma$ where $u_g$ denotes both the element of $C^*\Gamma$ and of (the multiplier algebra of) $A\rtimes \Gamma$ corresponding to $g$.

\begin{rem}
Let $B$ be a $C^*$-algebra endowed with a trivial $\Gamma$-action. Through the identification $KK^{\Gamma}(A,B)\cong KK(A \rtimes \Gamma,B)$, Kasparov's descent morphism is given by $x\mapsto (\delta^{A}_{max})^*(x\otimes_{max} 1_{C^*\Gamma}): KK(A \rtimes \Gamma,B)\to KK(A \rtimes \Gamma,B\rtimes \Gamma)=KK(A \rtimes \Gamma,B\otimes_{max} C^*\Gamma)$. Here, we denoted by $\delta^{A}_{max}:A\rtimes \Gamma \to (A \rtimes \Gamma)\otimes_{max} C^*\Gamma$ the morphism defined by the same formula as $\delta^A$.
\end{rem}

\section{K-theoretically ``free and proper'' algebras and $\rho$ class}
\label{strongrho}

\subsection{KK-theory elements with real coefficients associated with a trace on $C^*\Gamma$}

As explained in lemma \ref{lemma1}, for every $\Gamma$-algebra $A$ and every $C^*$-algebra $B$, we may identify $KK(A\rtimes \Gamma,B)$ with $KK^\Gamma(A,B)$ where $B$ is endowed with the trivial action of $B$. Replacing in this formula $B$ by $B\otimes M$ where $M$ is a ${\rm II}_1$-factor and taking the inductive limit, we find for a $\Gamma$-algebra $A$ and a $C^*$-algebra $B$ endowed with a trivial $\Gamma$ action an identification $KK_{\R}^\Gamma(A,B)=KK_{\R}(A\rtimes \Gamma,B)$ - and in the same way, $KK_{\R/\Z}^\Gamma(A,B)=KK_{\R/\Z}(A\rtimes \Gamma,B)$.

A trace $\tau$ on $C^*\Gamma$ gives rise to an element $[\tau]\in KK_{\R}(C^*\Gamma,\C)$ (Definition \ref{tau*}) and therefore an element  $[\tau]^\Gamma\in KK^\Gamma_{\R}(\C,\C)$.

\subsection{K-theoretically ``free and proper'' algebras}

We now consider a class of $\Gamma$-algebras on the $K$-theory of which ``the trace acts as the unit element''. Those are the \emph{$K$-theoretically ``free and proper'' algebras}:

\begin{dfn}
Let $\Gamma$ be a discrete group. Denote by ${\bf tr}$ its group tracial state.  We say that $A$ satisfies \SKFP\ if $1_A^\Gamma\otimes [{\bf tr}]^\Gamma$  is equal to the unit element $1_{A,\R}^\Gamma$ of the ring $KK^\Gamma_{\R}(A,A)$.
\end{dfn}

We will see in the following section important examples of algebras satisfying this property. In particular, we will see that free and proper algebras satisfy this condition (Theorem \ref{SKFPth}). Note however that, if $\Gamma$ has a $\gamma$ element in the sense of \cite{Kas2, Tu}, $K$-theoretically proper algebras should be those for which $\gamma=1$. It is not at all clear to us whether condition (KFP) implies $\gamma=1$.

On the other hand, it is worth noting that \SKFP\ implies a kind of freeness condition:   \SKFP\ cannot hold for the algebra $\C$ whenever the group $\Gamma$ has torsion. Indeed, let $\varepsilon :C^*\Gamma\to \C$ be the trivial representation. The algebra $\C$ has \SKFP\ if and only if the classes $[{{\bf tr}}]$ and $[{{\varepsilon}}]$ in  $KK_{\R}^\Gamma(\C,\C)=KK_{\R}(C^*\Gamma,\C)$ coincide.
Assume that $g\in \Gamma$ has finite order $m\ne 1$. Then the element $p=1/m(1+\gamma+\cdot \cdot \cdot +\gamma^{m-1})\in \C\Gamma$ is a projection with $[p]\otimes [{{\bf tr}}]={\bf tr}(p)=1/m$ and  $[p]\otimes[ {{\varepsilon}}]=\varepsilon(p)=1$.

\medskip
Assume $A$ satisfies  \SKFP. This means that there is a $\rm{II}_1$-factor $N$ with a morphism $\lambda:C^*\Gamma\longrightarrow N$ such that ${\rm tr}_N\circ \lambda={\bf tr}$ (where ${\rm tr}_N$ is the normalized trace of $N$) and such that the $(A,A\otimes N)$-bimodule  $A \otimes N$ where $A$ acts on the left by $a\mapsto a\otimes 1$ endowed with the action $g\mapsto g\otimes \lambda(g)$ and $g\mapsto g\otimes 1$  define the same element in $KK^\Gamma(A,A\otimes N)$.

\medskip Let us make a few comments on this definition:

\begin{properties}\begin{enumerate}
\item If $A$ satisfies \SKFP\ and $B$ is any $\Gamma$-algebra, then $A\otimes B$ satisfies \SKFP. \\ 
Indeed,  if $1_A^\Gamma\otimes [{\bf tr}]^\Gamma=1_{A,\R}^\Gamma$, then $1_A^\Gamma\otimes1_B^\Gamma\otimes [{\bf tr}]^\Gamma=(1_A^\Gamma\otimes [{\bf tr}]^\Gamma)\otimes1_B^\Gamma=1_{A\otimes B,\R}^\Gamma$.
\item Let $A,B$ be $\Gamma$-algebras. If $A$ is $KK^\Gamma$-subequivalent to $B$ and $B$ satisfies  \SKFP then so does $A$. \\ The assumption means that there exist $x\in KK^\Gamma(A,B)$ and $y\in KK^\Gamma(B,A)$ satisfying $x\otimes_By=1_A^\Gamma$. Note that $y\otimes _\C[{\bf tr}]^\Gamma=[{\bf tr}]^\Gamma\otimes _\C y=(1_B^\Gamma\otimes [{\bf tr}]^\Gamma)\otimes _B y$.
  If $B$ satisfies \SKFP, then $y\otimes_\C [{\bf tr}]^\Gamma$ is the image in $KK_{\R}^\Gamma(B,A)$ of $y$, whence $1_A^\Gamma\otimes [{\bf tr}]^\Gamma=x\otimes _B(y\otimes_\C [{\bf tr}]^\Gamma)=1_{A,\R}^\Gamma$.
\end{enumerate}
\end{properties}

\begin{rem} \label{strongprodtraces}
Let $\tau ,\tau'$ be tracial states on $C^*\Gamma$. Let $[\tau]^\Gamma$ and $[\tau']^\Gamma$ denote their classes in $KK_{\R}^\Gamma(\C,\C)$. Their Kasparov product $[\tau]^\Gamma\otimes [\tau']^\Gamma$ is then the class of the trace $\tau.\tau'=(\tau\otimes \tau')\circ \delta$. Note that $\tau.\tau'(g)=\tau(g)\tau'(g)$ for any group element $g\in \Gamma$, and in particular $\tau.{\bf tr}={\bf tr}$. If $A$ satisfies \SKFP, then $1_A^\Gamma\otimes [\tau]^\Gamma=1_{A,\R}^\Gamma$ for any tracial state $\tau$.
\end{rem} 

\begin{rem} \label{qGammainvertible}
If $A$ satisfies \SKFP, then the morphism $q_\Gamma:A\rtimes \Gamma\to A\rtimes_{red}\Gamma$ is $KK_{\R}$ invertible. \\ Indeed, given a von Neumann algebra $N$ and a morphism $\lambda:C^*\Gamma\longrightarrow N$ such that ${\rm tr}_N\circ \lambda={\bf tr}$ (where ${\rm tr}_N$ is the normalized trace of $N$), the morphism $\lambda:C^*\Gamma \to N$ factors through $C^*_{red}(\Gamma)$; it therefore defines a morphism  $\widehat{\morph{A}{\lambda}}:A\rtimes_{red} \Gamma\longrightarrow A\rtimes \Gamma\otimes N$. We obviously have $q_\Gamma^*[\widehat{\morph{A}{\lambda}}]=j_\Gamma (1_A^\Gamma\otimes [{\bf tr}]^\Gamma)$ and $(q_\Gamma)_*[\widehat{\morph{A}{\lambda}}]=j_{\Gamma,red} (1_A^\Gamma\otimes [{\bf tr}]^\Gamma)$.
\end{rem} 

We will use the quite obvious following lemma:
 
 \begin{lem}
\label{strongunitary}  Let $\alpha:\Gamma\to U_n$ be a unitary representation.  The group $\Gamma$ acts on $C(U_n)$ by left translation via $\alpha$. Let $u:U_n\to M_n(\C)$ be the identity of $U_n$ seen as an element of $C(U_n)\otimes M_n(\C)$. For $g\in \Gamma$, we denote by $\lambda_g\in C(U_n)\rtimes \Gamma$ the corresponding element.
For $g\in \Gamma$ we have $(\lambda_g\otimes \alpha_g)u=u(\lambda_g\otimes 1)$ in $\Big(C(U_n)\rtimes \Gamma\Big)\otimes M_n(\C)$.
\begin{proof}
Note that, for $f\in C(U_n)$ and $x\in U_n$ we have $(\lambda_g f\lambda_g^{-1})(x)=f(\alpha^{-1}_gx)$. Write $u=\sum a_j\otimes b_j$ where $a_j\in C(U_n)$ and $b_j\in M_n(\C)$. By definition of $u$, we have $\sum_ja_j(x)b_j=x$ for all $x\in U_n$.

We have $((\lambda_g\otimes 1) u(\lambda^{-1}_g\otimes 1))(x)=\sum_j a_j(\alpha_g^{-1}x)b_j=\alpha^{-1}_gx $, and the result follows.
\end{proof}
\end{lem}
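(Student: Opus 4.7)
The plan is to verify the identity by a direct computation, using the relation that holds between translations and the generators of the crossed product. Since the result is formulated as an equality in $(C(U_n)\rtimes\Gamma)\otimes M_n(\C)$, it is equivalent (multiplying by $\lambda_g^{-1}\otimes 1$ on the right) to establishing that $(\lambda_g\otimes \alpha_g)\,u\,(\lambda_g^{-1}\otimes 1)=u$, which is what I would aim for.

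First, I would recall the fundamental commutation relation in the crossed product $C(U_n)\rtimes\Gamma$: since $\Gamma$ acts on $C(U_n)$ by left translation via $\alpha$, we have $(\lambda_g f\lambda_g^{-1})(x)=f(\alpha_g^{-1}x)$ for every $f\in C(U_n)$ and $x\in U_n$. Writing $u=\sum_j a_j\otimes b_j$ with $a_j\in C(U_n)$, $b_j\in M_n(\C)$ and $\sum_j a_j(x)b_j=x$ for all $x\in U_n$ (the defining property of $u$ as the identity map $U_n\to U_n\subset M_n(\C)$), conjugation by $\lambda_g\otimes 1$ gives $(\lambda_g\otimes 1)u(\lambda_g^{-1}\otimes 1)=\sum_j(\lambda_g a_j\lambda_g^{-1})\otimes b_j$. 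Evaluated at $x\in U_n$, this element is the function $x\mapsto \sum_j a_j(\alpha_g^{-1}x)b_j=\alpha_g^{-1}x$, simply by the defining identity for $u$ applied at the point $\alpha_g^{-1}x$.

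Next, I would factor $\lambda_g\otimes \alpha_g=(1\otimes \alpha_g)(\lambda_g\otimes 1)$, and observe that $1\otimes \alpha_g$ acts on a function-valued element of $C(U_n)\otimes M_n(\C)$ by left multiplication in the $M_n(\C)$-factor. Applying this to the previous calculation yields $(\lambda_g\otimes \alpha_g)u(\lambda_g^{-1}\otimes 1)$ as the function $x\mapsto \alpha_g\cdot(\alpha_g^{-1}x)=x$, which is exactly $u$. Multiplying on the right by $\lambda_g\otimes 1$ concludes the proof.

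There is essentially no obstacle here; the lemma is a bookkeeping statement about the equivariance of the tautological identity map $U_n\to U_n$ under simultaneous left translation on the source and left multiplication on the target. The only care needed is the order of the tensor factors and the fact that $1\otimes \alpha_g$ does \emph{not} commute with $u$ in general, which is precisely the source of the asymmetry between the two sides of the equality.
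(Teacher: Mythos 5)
Your proof is correct and follows essentially the same route as the paper's: the commutation relation $(\lambda_g f\lambda_g^{-1})(x)=f(\alpha_g^{-1}x)$, the expansion $u=\sum_j a_j\otimes b_j$ with $\sum_j a_j(x)b_j=x$, and the computation $((\lambda_g\otimes 1)u(\lambda_g^{-1}\otimes 1))(x)=\alpha_g^{-1}x$. The only difference is that you spell out the final multiplication by $1\otimes\alpha_g$, which the paper leaves as ``and the result follows.''
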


\subsection{Gluing Kasparov bimodules}

To construct $\rho_\alpha^A$, we glue two Kasparov bimodules on a mapping cylinder in such a way that they form a Kasparov bimodule on a double cylinder. We discuss here this general construction.

Let $A,B_0,B$ be $\Gamma$-algebras and $j:B_0\to B$ an equivariant morphism. Denote by $Z_j=\{(b,f)\in B_0\times B[0,1];\ f(0)=j(b)\}$ the mapping cylinder of $j$ and $Z_{j,j}=\{(b_0,b_1,f)\in B_0\times B_0\times B[0,1];\ f(0)=j(b_0),\ f(1)=j(b_1)\}$ its mapping double cylinder. Let $\overline{\rm ev}_0:Z_j\to B_0$ and ${\rm ev}_1:Z_j\to B$ be the maps $\overline{\rm ev}_0:(b,f)\mapsto b$ and ${\rm ev}_1:(b,f)\mapsto f(1)$.

We have a split exact sequence $0\to C_j\longrightarrow  Z_{j,j}\overset{\overline{\rm ev}_1}\longrightarrow B_0\to 0$ where $C_j=\{(b,f)\in B_0\times B[0,1);\ f(0)=j(b)\}$ is the mapping cone of $j$ and $\overline{\rm ev}_1(a,b,f)=b$ - the section $\tilde \j:B_0\to Z_{j,j}$ is given by $\tilde \j(b)=(b,b,f)$ where $f(t)=j(b)$ for all $t\in [0,1]$. We thus have a group homomorphism  $\Theta:KK^\Gamma(A,Z_{j,j})\to  KK^\Gamma(A,C_j)$ which is a left inverse of the inclusion $C_j\to Z_{j,j}$ and vanishes on the image of $\tilde \j_*$.

We will identify the double cylinder $Z_{j,j}$ with the algebra $\mathcal{Z}_j=\{(x,y)\in Z_j\times Z_j;\ {\rm ev}_1(x)={\rm ev}_1(y)\}$ through the isomorphism $\Phi:Z_{j,j}\to \mathcal {Z}_j$ given by: $\Phi(a,b,f)=((a,g),(b,h))$ with $g(t)=f(t/2),\ h(t)=f(1-t/2)$ (for $t\in [0,1]$).

Given a Hilbert $Z_j$-module $E$, the Hilbert $B_0$-module $E\otimes _{\overline{\rm ev}_0} B_0$ and the Hilbert-$B$-module $E\otimes _{{\rm ev}_1} B$ are quotients of $E$. We denote by $\overline{\rm ev}_0:E\to E\otimes _{\overline{\rm ev}_0} B_0$ and ${\rm ev}_1:E\to E\otimes _{ {\rm ev}_1} B$ the quotient maps.

\begin{prop}\label{diamond}
Let $(E,F),(E',F')$ be $\Gamma$-equivariant Kasparov $(A,Z_j)$-bimodules.  \begin{enumerate}
\item Let $w\in \cL(E'\otimes _{{\rm ev}_1}B,E\otimes _{{\rm ev}_1}B)$ be a unitary equivalence between the $\Gamma$-equivariant Kaparov $(A,B)$-bimodules $({\rm ev}_1)_*(E',F')$ and $({\rm ev}_1)_*(E,F)$.
Define 
$$E\diamond_wE'=\{(\xi,\zeta)\in E\times E';\ {\rm ev}_1(\xi)=w\,{\rm ev}_1(\zeta)\}.$$ It is naturally an equivariant Hilbert-$(A,\mathcal{Z}_j)$-bimodule. The map $(F\diamond F'):(\xi,\zeta)\mapsto (F\xi,F'\zeta)$ is an (odd) element in $\cL(E\diamond_wE')$. The pair $(E\diamond_wE',F\diamond F')$ is an equivariant Kasparov $(A,\mathcal {Z}_j)$-bimodule

\item \label{homotopic_unit_equiv} If $v,w\in \cL(E'\otimes _{{\rm ev}_1}B,E\otimes _{{\rm ev}_1}B)$ are two unitary equivalences, homotopic among unitary equivalences, then the classes of $(E\diamond_vE',F\diamond F')$ and $(E\diamond_wE',F\diamond F')$ in $KK^\Gamma(A,\mathcal {Z}_j)$ coincide.

\item \label{trivialdiamond} The class $\Theta[(E\diamond_{\id }E,F\diamond F)]$ in $ KK^\Gamma(A,C_j)$ is $0$.

\item \label{diamondadd}Let $(E'',F'')$ be another $\Gamma$-equivariant Kasparov $(A,Z_j)$-bimodule. Let $w\in \cL(E'\otimes _{{\rm ev}_1}B,E\otimes _{{\rm ev}_1}B)$ and $w'\in \cL(E''\otimes _{{\rm ev}_1}B,E'\otimes _{{\rm ev}_1}B)$ be unitary equivalences between the $\Gamma$-equivariant Kaparov $(A,B)$-bimodules $({\rm ev}_1)_*(E',F')$ and $({\rm ev}_1)_*(E,F)$ and between $({\rm ev}_1)_*(E'',F')$ and $({\rm ev}_1)_*(E',F)$ respectively. Then $$\Theta[(E\diamond_{ww'}E'',F\diamond F'')]=\Theta[(E\diamond_{w}E',F\diamond F')]+\Theta[(E'\diamond_{w'}E'',F\diamond F'')].$$
\end{enumerate}
\begin{proof}
\begin{enumerate}
\item comes from the fact that $w$ is supposed to commute with $A,\Gamma$ and $wF'=Fw$. Also, it is easily seen that $\cK(E\diamond_wE')=\{(T,T')\in \cK(E)\times \cK(E');\ w(T'\otimes_{{\rm ev}_1}1)=(T\otimes_{{\rm ev}_1}1)w\}$.
\item is immediate.
\item Let $(E_t,F_t)_{t\in [0,1]}$ be a  homotopy joining  $(\overline \j\circ \overline{\rm ev}_0)_*(E,F)$ to $(E,F)$: write $(E_t,F_t)=(h_t)_*(E,F)$ where $h_t:Z_j\to Z_j$ is the map $(b,f)\mapsto (b,f_t)$ where $f_t(s)=f(st)$. Then $(E_t\diamond_{\id }E_t,F_t\diamond F_t)$ is a homotopy joining $(\tilde\j \circ \overline{\rm ev}_0)_*(E,F)$ to $(E\diamond_{\id }E,F\diamond F)$.

\item The unitary equivalences $u=\begin{pmatrix}w&0\\0&w' \end{pmatrix},v=\begin{pmatrix}0&\id \\ww'&0 \end{pmatrix}\in \cL((E'\oplus E'')\otimes _{{\rm ev}_1}B,(E\oplus E')\otimes _{{\rm ev}_1}B)$ are homotopic among unitary equivalences. By (\ref{homotopic_unit_equiv}) the classes $$[((E\oplus E')\diamond_{u}(E'\oplus E''),(F\oplus F')\diamond (F'\oplus F''))]$$ and $$[((E\oplus E')\diamond_{v}(E'\oplus E''),(F\oplus F')\diamond (F'\oplus F''))]$$  in $KK^\Gamma(A,Z_{j,j})$ coincide. But clearly  $$[((E\oplus E')\diamond_{u}(E'\oplus E''),(F\oplus F')\diamond (F'\oplus F''))]=[(E\diamond_{w}E',F\diamond F')]+ [(E'\diamond_{w'}E'',F\diamond F'')]$$ and $$[((E\oplus E')\diamond_{v}(E'\oplus E''),(F\oplus F')\diamond (F'\oplus F''))]=[(E\diamond_{ww'}E'',F\diamond F'')]+ [(E'\diamond_{id}E',F'\diamond F')].$$
The result follows from (\ref{trivialdiamond}).
\end{enumerate}
\end{proof}
\end{prop}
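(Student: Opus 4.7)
The plan is to verify the four assertions in order: direct Hilbert-module checks for (1), a path-of-unitaries homotopy for (2), a deformation retraction of the cylinder for (3), and the standard $2\times 2$ matrix rotation for (4). For (1), I would describe $E\diamond_w E'$ as the pullback of the quotient maps ${\rm ev}_1\colon E\to E\otimes_{{\rm ev}_1}B$ and ${\rm ev}_1\colon E'\to E'\otimes_{{\rm ev}_1}B$ identified by $w$. The Hilbert $\mathcal{Z}_j$-module structure is diagonal on both factors; inner products land in $\mathcal{Z}_j$ because $w$ is unitary; the $A$- and $\Gamma$-actions preserve the fiber-product condition because $w$ is $A$-linear and $\Gamma$-equivariant; and $F\diamond F'$ is well defined because $w$ intertwines $({\rm ev}_1)_*F$ and $({\rm ev}_1)_*F'$. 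The Kasparov conditions on $(E\diamond_w E',F\diamond F')$ reduce coordinatewise to those of $(E,F)$ and $(E',F')$, using the identification $\cK(E\diamond_w E')=\{(T,T')\in\cK(E)\times\cK(E');\ w(T'\otimes 1)=(T\otimes 1)w\}$.

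For (2), given a continuous path $(u_t)_{t\in[0,1]}$ of unitary equivalences from $v$ to $w$, I would apply the construction of (1) pointwise to produce a Kasparov $(A,C([0,1],\mathcal{Z}_j))$-bimodule whose fiber at $t$ is $E\diamond_{u_t}E'$; its evaluations at $t=0,1$ yield the required equality. For (3), the idea is to deform $(E,F)$ along the cylinder toward its $B_0$-end via the endomorphisms $h_t\colon Z_j\to Z_j$, $h_t(b,f)=(b,s\mapsto f(st))$, giving a homotopy from $h_1=\id$ to $h_0=\iota\circ\overline{\rm ev}_0$, where $\iota\colon B_0\to Z_j$ is the canonical section $b\mapsto(b,s\mapsto j(b))$. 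Pulling back $(E,F)$ along $h_t$ and forming $\diamond_{\id}$ pointwise should yield a Kasparov $(A,C([0,1],\mathcal{Z}_j))$-bimodule joining $(E\diamond_{\id}E,F\diamond F)$ at $t=1$ to $\tilde\j_*(\overline{\rm ev}_0)_*(E,F)$ at $t=0$; since the latter lies in the image of $\tilde\j_*$, it is killed by $\Theta$.

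For (4), I would use the $2\times 2$ rotation trick: the block unitary equivalences $\bigl(\begin{smallmatrix}w&0\\0&w'\end{smallmatrix}\bigr)$ and $\bigl(\begin{smallmatrix}0&\id\\ww'&0\end{smallmatrix}\bigr)$ from $(E'\oplus E'')\otimes_{{\rm ev}_1}B$ to $(E\oplus E')\otimes_{{\rm ev}_1}B$ are homotopic among unitary equivalences. Applying (2) and unwinding the $\diamond$-products on direct sums yields the equality in $KK^\Gamma(A,\mathcal{Z}_j)$ between $[E\diamond_w E']+[E'\diamond_{w'}E'']$ and $[E\diamond_{ww'}E'']+[E'\diamond_{\id}E']$. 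Applying $\Theta$ and invoking (3) to kill the $E'\diamond_{\id}E'$ term gives the stated additivity.

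The main obstacle is (3): pulling back by $h_t$ and forming $\diamond_{\id}$ pointwise must genuinely produce a homotopy over a single fixed algebra $\mathcal{Z}_j$, rather than over a $t$-varying pullback of it. Careful bookkeeping of the $Z_j$-module structure on $(h_t)_*E$ and its compatibility with the fixed ${\rm ev}_1$-quotient used in the fiber product is the delicate point; once handled, the remaining parts are formal consequences of functoriality and the Puppe-type exactness already used elsewhere in Section~\ref{torus}.
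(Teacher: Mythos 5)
Your proposal is correct and follows essentially the same route as the paper's proof: the same fiber-product description and $\cK$-identification for (1), a path-of-unitaries homotopy for (2), the deformation $(h_t)_*(E,F)$ with $h_t(b,f)=(b,s\mapsto f(st))$ glued by $\diamond_{\id}$ for (3), and the $2\times 2$ rotation of $\bigl(\begin{smallmatrix}w&0\\0&w'\end{smallmatrix}\bigr)$ to $\bigl(\begin{smallmatrix}0&\id\\ww'&0\end{smallmatrix}\bigr)$ combined with (2) and (3) for (4). The bookkeeping you flag in (3) does work out, since for each $t$ the two copies of $(h_t)_*E$ share the same ${\rm ev}_1$-quotient, so the pointwise fiber products assemble into a single Kasparov $(A,C([0,1],\mathcal{Z}_j))$-bimodule.
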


\subsection{Construction of $\rho_{\alpha}^A$}\label{constrrhoalphaA}

Let $A$ be a $\Gamma$-algebra satisfying \SKFP, and $\alpha$ a finite dimensional unitary representation of $\Gamma$. Denote by $[\alpha]^\Gamma\in KK^\Gamma(\C,\C)$ its $KK^\Gamma$-class.

By Remark \ref{strongprodtraces}, $1_{A,\R}^\Gamma\otimes [{\alpha}]=n1_{A,\R}^\Gamma$ where $1_{A,\R}^\Gamma$ denotes the unit element of the ring $KK^\Gamma_{\R}(A,A)$. Using the Bockstein change of coefficients exact sequence, it follows that there exists $z\in KK^{1,\Gamma}_{{\R}/\Z}(A ,A)$ whose image in $KK^\Gamma(A,A)$ is $[ {\alpha}]-n1_{A}^\Gamma$.

We now construct a class $\rho_{\alpha}^A\in KK^{1,\Gamma}_{{\R}/\Z}(A,A)$ whose image in $KK^{1,\Gamma}_{{\R}/\Z}(A ,A)$ is $1_{A,\R}^\Gamma\otimes [\alpha]^\Gamma-n1_{A, \R}^\Gamma$.

The important fact in our construction is that the element $\rho_{\alpha}^A$ is \emph{independent of all choices.}

\bigskip 
The assumption (KFP) says that there exists a $\rm{II}_1$-factor $N$ (with trivial action of $\Gamma$), with a tracial morphism $\lambda=\lambda_N:C^*\Gamma\longrightarrow N$, \textit{i.e.} a morphism such that ${\rm tr}_N\circ \lambda={\bf tr}$ (where ${\rm tr}_N$ is the normalized trace of $N$) satisfying $$1_A^\Gamma\otimes [i]=1_A^\Gamma\otimes [P_\lambda]$$ where $i=i_N:\C\to N$ is the unital morphism and $P_\lambda$  is the $\Gamma$-equivariant Hilbert $\C,N$-bimodule $N$ with the natural Hilbert $N$-module structure,  and where $g\in \Gamma$ acts by left multiplication by $\lambda(g)$.
Under the isomorphism $KK^\Gamma(\C,N)= KK(C^*\Gamma,N)$ the class $[P_\lambda]$ corresponds to the class of the morphism $\lambda$.

There exists a $\rm{II}_1$-factor $M$ containing $N$ with a morphism $C(U_n)\rtimes \Gamma\to M$ extending the morphism $\lambda :C^*\Gamma\to N\subset M$. Indeed we can take as $M$ (a $\rm{II}_1$-factor containing) the  free product of $N$ with $L^{\infty}(U_n)\rtimes \Gamma$ with amalgamation over the group von Neumann algebra of $\Gamma$.

Up to replacing $N$ by $M$, it follows, thanks to Lemma \ref{strongunitary}, that we may assume that there exists a unitary $u\in N\otimes M_n(\C)$ satisfying $(\lambda_g\otimes \alpha_g)u=u(\lambda_g\otimes 1)\in N\otimes  M_n(\C)$  for all $g\in \Gamma$.

Denote by $V_\alpha$ the equivariant $(\C,\C)$ bimodule $\C^n$ where $\Gamma$ acts through the action $\alpha$. Denote also by $V_n$ the equivariant $(\C,\C)$ bimodule $\C^n$ where $\Gamma$ acts trivially. We have a unitary equivalence $1_A\otimes u\in \cL(A\otimes P_\lambda\otimes V_n, A\otimes P_\lambda\otimes V_\alpha)$ between the $\Gamma $-equivariant Kasparov $(A,A\otimes N)$ bimodules $(A\otimes P_\lambda\otimes V_n,0)$ and $(A\otimes P_\lambda\otimes V_\alpha,0)$.

\medskip Let $i:\C\to N$ be the unital inclusion. Denote by $\bar p_0:A\otimes Z_i\to A$ and $p_1:A\otimes Z_{i}\to A\otimes N$ the maps $\id_A\otimes \overline{\rm ev}_0$ and $\id_A\otimes {\rm ev}_1$, where $\overline{\rm ev}_0:Z_{i}\to \C$ is the map $(a,f)\mapsto a$ and ${\rm ev}_1:Z_{i}\to N$ is the map $(a,f)\mapsto f(1)$.

As $1_A^\Gamma\otimes [i]=1_A^\Gamma\otimes [P_\lambda]$,  there exists an equivariant Kasparov-bimodule $(E,F)\in {\bf E}^\Gamma(A,A\otimes Z_{i})$ joining  $(A,0)$ and $(A\otimes P_\lambda,0)$. More precisely the equivariant  Kasparov bimodule $(E,F)$ is such that:\begin{itemize}
\item the induced equivariant bimodule $E\otimes _{\bar p_0}A $ is the $(A,A)$-bimodule $A$ (endowed with the natural $\Gamma$-action - and $F_0=0$);
\item  the induced equivariant bimodule  $E\otimes _{p_1}(A\otimes N)$ is the equivariant $(A,A\otimes N)$-bimodule  $A \otimes P_\lambda$.
\end{itemize}
We will say that  \emph{a ``proof of the property (KFP)'' for $A$} consists of $N,\lambda,(E,F)$ where:
\begin{itemize}
\item $N$ is a $\rm{II}_1$-factor with normalized trace ${\rm tr}_N$;
\item $\lambda:C^*\Gamma\longrightarrow N$, is a morphism such that ${\rm tr}_N\circ \lambda={\bf tr}$;
\item $(E,F)\in {\bf E}^\Gamma(A,A\otimes Z_{i})$ is a Kasparov bimodule joining  $(A,0)$ and $(A\otimes P_\lambda,0)$.
\end{itemize}

\begin{theorem}\label{Strongrhoconstruction}
\begin{enumerate}
\item Let $(E,F)\in {\bf E}^\Gamma(A,A\otimes Z_{i})$ be a Kasparov bimodule joining  $(A,0)$ and $(A\otimes P_\lambda,0)$.
The class in $KK^\Gamma(A,A\otimes Z_{{i},{i}})$ of $$(E\otimes V_{\alpha},F\otimes 1)\diamond _{\id_A\otimes u} (E\otimes V_n,F\otimes 1)$$ does not depend  on the unitary $u\in N\otimes M_n(\C)$ satisfying $(\lambda_g\otimes \alpha_g)u=u(\lambda_g\otimes 1)$ for all $g\in \Gamma$.
\item \label{indepe}The image of this class in $KK^{1,\Gamma}_{\R/\Z}(A,A)$ through the composition $$\vartheta:KK^\Gamma(A,A\otimes Z_{{i},{i}})\overset{\Theta}\longrightarrow KK^\Gamma(A,A\otimes C_{i})\longrightarrow  KK^{1,\Gamma}_{\R/\Z}(A,A)$$ does not depend:\begin{enumerate}
\item \label{indepeEF}on $(E,F)\in {\bf E}^\Gamma(A,A\otimes Z_{i})$ such that $E\otimes _{A\otimes Z_{i}}A =A$ and  $E\otimes _{A\otimes Z_{i}}A=A \otimes P_\lambda$;
\item on the $\rm{II}_1$-factor $N$ containing a tracial morphism of $\lambda_N:C^*\Gamma\to N$ and a unitary $u\in N\otimes M_n(\C)$ satisfying\begin{enumerate}
\item $1_A^\Gamma\otimes [i]=1_A^\Gamma\otimes [\lambda_N]$
\item  $(\lambda_g\otimes \alpha_g)u=u(\lambda_g\otimes 1)\in N\otimes  M_n(\C)$  for all $g\in \Gamma$.
\end{enumerate}
\end{enumerate}

\end{enumerate}
\begin{proof}
\begin{enumerate}
\item  Choose a $u\in N\otimes M_n(\C)$ such that $(\lambda_g\otimes \alpha_g)u=u(\lambda_g\otimes 1)\in N\otimes  M_n(\C)$  for all $g\in \Gamma$. Then an element $v\in N\otimes M_n(\C)$ satisfies the same property if and only if it is of the form $uw$ where $w$ is a unitary of the commutant of $\lambda (\Gamma)\otimes 1$ in $N\otimes M_n(\C)$. But the set of unitaries in the von Neumann algebra $(N\cap \lambda(G)')\otimes M_n(\C)$ is connected.
\item\begin{enumerate}
\item Let $(E',F')\in {\bf E}^\Gamma(A,A\otimes Z_{i})$ be another Kasparov bimodule joining  $(A,0)$ and $(A\otimes P_\lambda,0)$. Choose a $u\in N\otimes M_n(\C)$ such that $(\lambda_g\otimes \alpha_g)u=u(\lambda_g\otimes 1)\in N\otimes  M_n(\C)$  for all $g\in \Gamma$ and put $\check u=\id_A\otimes u$.

By Proposition \ref{diamond}(\ref{trivialdiamond}) and \ref{diamond}(\ref{diamondadd}), the image by $\Theta$ of $(E\otimes V_{\alpha},F\otimes 1)\diamond _{\check u} (E\otimes V_n,F\otimes 1)$ is opposite to the one of $(E\otimes V_n,F\otimes 1)\diamond _{\check u^*} (E\otimes V_{\alpha},F\otimes 1)$;  moreover, by Proposition \ref{diamond}(\ref{diamondadd}), the image by $\Theta$ of $$[(E'\otimes V_{\alpha},F'\otimes 1)\diamond _{\check u} (E'\otimes V_n,F'\otimes 1)]+[(E\otimes V_n,F\otimes 1)\diamond _{\check u^*} (E\otimes V_{\alpha},F\otimes 1)]$$
coincides with that of $$[(E'\otimes V_{\alpha},F'\otimes 1)\diamond _{\id} (E\otimes V_{\alpha},F\otimes 1)]+[(E\otimes V_n,F\otimes 1)\diamond _{\id} (E'\otimes V_n,F'\otimes 1)].
$$
Using again  \ref{diamond}(\ref{trivialdiamond}) and \ref{diamond}(\ref{diamondadd}), we may replace $[(E\otimes V_n,F\otimes 1)\diamond _{\id} (E'\otimes V_n,F'\otimes 1)]$ by $-[(E'\otimes V_n,F'\otimes 1)\diamond _{\id} (E\otimes V_n,F\otimes 1)]$. We end up with $$\Theta([(E,F)\diamond _{\id} (E',F')]\otimes_{\C}([V_\alpha]-[V_n])).$$ Now, since $(\hat p_0)_*[(E,F)\diamond _{\id} (E',F')]=(\hat p_0)_*[(E,F)]=1_A=(\hat p_0)_*[(E',F')]=(\hat p_1)_*[(E,F)\diamond _{\id} (E',F')]$, it follows that $\Theta([(E,F)\diamond _{\id} (E',F')])$ is in the image of an element $x\in KK^\Gamma (A,A\otimes SN)$ via the inclusion $h:SN\to C_i$. As $([V_\alpha]-[V_n]))$ is the $0$ element in $KK_\R^\Gamma(A,A)$ it follows that the image of $x\otimes _\C([V_\alpha]-[V_n]))$ in $KK^{1,\Gamma}_\R(A,A)$ is the zero element.

\item Let $i_{M,N}:N\to M$ be a morphism of ${\rm II}_1$-factors. Denote by $i_{M,N}:Z_{i_N}\to Z_{i_M}$ and $i_{M,N}:Z_{i_N,i_N}\to Z_{i_M,i_M}$ the corresponding morphism, $v=(i_{M,N}\otimes \id)(u)$. Let $(E,F)\in {\bf E}^\Gamma(A,A\otimes Z_{i_N})$ such that $E\otimes _{A\otimes Z_{i_N}}A =A$ and  $E\otimes _{A\otimes Z_{i_N}}A=A \otimes P_{\lambda_N}$. Put $(E_M,F_M)=(E,F)\otimes _{i_{M,N}}Z_{i_M}$. One checks immediately that the image by $i_{M,N}:Z_{i_N,i_N}\to Z_{i_M,i_M}$ of $[(E\otimes V_{\alpha},F\otimes 1)\diamond _{\check u} (E\otimes V_n,F\otimes 1)]$ is $$[(E_M\otimes V_{\alpha},F_M\otimes 1)\diamond _{\check v} (E_M\otimes V_n,F_M\otimes 1)].$$
\qedhere
\end{enumerate}
\end{enumerate}
\end{proof}
\end{theorem}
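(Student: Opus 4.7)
The plan is to treat the three independence claims in turn, since each corresponds to a different source of ambiguity in the definition of $\rho_\alpha^A$.

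\emph{Independence from $u$.} First I would fix a bimodule $(E,F)$ and observe that if $u,v\in N\otimes M_n(\C)$ both satisfy $(\lambda_g\otimes\alpha_g)u=u(\lambda_g\otimes 1)$, then $w=u^*v$ lies in the relative commutant $(N\cap\lambda(\Gamma)')\otimes M_n(\C)$, which is a unital von Neumann subalgebra. Its unitary group is norm path-connected: every unitary has a bounded self-adjoint logarithm via the continuous functional calculus, and exponentiation yields a norm-continuous path to the identity. Lifting this to a path $u_t$ of unitaries all obeying the commutation relation, the pairs $\id_A\otimes u_t$ give a homotopy of unitary equivalences in the sense required by Proposition \ref{diamond}(\ref{homotopic_unit_equiv}), so the resulting classes in $KK^\Gamma(A,A\otimes Z_{i,i})$ coincide.

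\emph{Independence from $(E,F)$.} Given two Kasparov bimodules $(E,F),(E',F')$ joining $(A,0)$ to $(A\otimes P_\lambda,0)$, I would apply Proposition \ref{diamond}(\ref{diamondadd}) repeatedly to telescope the difference of the two diamond classes. Using \ref{diamond}(\ref{trivialdiamond}) to drop $\diamond_{\id}$-terms and \ref{diamond}(\ref{diamondadd}) to swap unitary equivalences between different bimodules, I expect the difference to reduce, after applying $\Theta$, to
\[
\Theta\bigl([(E,F)\diamond_{\id}(E',F')]\otimes_{\C}([V_\alpha]-[V_n])\bigr).
\]
The crucial observation is that both evaluations $\bar p_0$ and ${\rm ev}_1$ send $[(E,F)\diamond_{\id}(E',F')]$ to $1_A$, so $\Theta$ of it factors through the inclusion $h\colon SN\hookrightarrow C_i$, i.e.\ comes from $KK^\Gamma(A,A\otimes SN)$. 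Since $A$ satisfies \SKFP\ and, by Remark \ref{strongprodtraces}, $[V_\alpha]=n[V_n]$ in $KK^\Gamma_\R(\C,\C)$ after multiplication with $1_A^\Gamma$, the product with $([V_\alpha]-[V_n])$ vanishes in $KK^{1,\Gamma}_{\R/\Z}(A,A)$ via the Bockstein change-of-coefficients map.

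\emph{Independence from $(N,\lambda,u)$.} Given two choices $(N_1,\lambda_1,u_1)$ and $(N_2,\lambda_2,u_2)$, I would invoke directedness of $(\mathcal F_{{\rm II}_1}(H),\prec)$ and the amalgamated-free-product construction recalled before Theorem \ref{Strongrhoconstruction} to embed both factors tracially into a common ${\rm II}_1$-factor $M$ in such a way that both morphisms $\lambda_i\colon C^*\Gamma\to N_i\hookrightarrow M$ agree with a single $\lambda_M$. Then I would check that the diamond construction is natural under any trace-preserving embedding $i_{M,N}\colon N\to M$: pushing $(E,F)$ and $u$ forward under $i_{M,N}\otimes\id$ yields the diamond class for $M$. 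Passing to the inductive limit defining $KK^{1,\Gamma}_{\R/\Z}$ then identifies the two original classes.

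The main obstacle I anticipate is the algebraic bookkeeping in the second step: correctly organizing the telescoping of diamond classes so that the difference factorizes through $SN$-coefficients on the one hand and through the $KK_\R$-trivial class $[V_\alpha]-[V_n]$ on the other. Once this factorization is in place, the three independence claims fit together to give a well-defined element $\rho_\alpha^A\in KK^{1,\Gamma}_{\R/\Z}(A,A)$.
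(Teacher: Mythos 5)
Your proposal follows essentially the same route as the paper's proof: independence of $u$ via path-connectedness of the unitary group of the relative commutant $(N\cap\lambda(\Gamma)')\otimes M_n(\C)$, independence of $(E,F)$ by telescoping with Proposition \ref{diamond}(\ref{trivialdiamond})--(\ref{diamondadd}) down to $\Theta\bigl([(E,F)\diamond_{\id}(E',F')]\otimes_{\C}([V_\alpha]-[V_n])\bigr)$, which factors through $KK^\Gamma(A,A\otimes SN)$ and hence dies in real coefficients by \SKFP, and independence of $(N,\lambda,u)$ by naturality under tracial embeddings together with directedness via amalgamated free products. The only slip is cosmetic: the logarithm of a unitary with full-circle spectrum requires the \emph{Borel} functional calculus available in a von Neumann algebra, not the continuous one, but the path-connectedness you need is exactly what the paper invokes.
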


\begin{dfn}\label{defstrongrho}
Let $A$ a $\Gamma$-algebra satisfying \SKFP.
We denote by $\rho_{\alpha}^A\in KK^{1,\Gamma}_{\R/\Z}(A,A)$ the  image  by the map $\vartheta:KK^\Gamma(A,A\otimes Z_{i,i})\longrightarrow  KK^{1,\Gamma}_{\R/\Z}(A,A)$ of an element $[(E\otimes V_{\alpha},F\otimes 1)\diamond _{\check u} (E\otimes V_n,F\otimes 1)]\in KK^{\Gamma}(A,A\otimes Z_{i,i})$ as in Theorem  \ref{Strongrhoconstruction}.
\end{dfn}

\begin{prop}
The image of $\rho_{\alpha}^A $ under the connecting map of the Bockstein change of coefficients exact sequence $KK^{1,\Gamma}_{\R/\Z}(A,A)\to KK^\Gamma(A, A)$ is $[\alpha]^\Gamma-n1_{A}^{\Gamma}$.
\begin{proof}
Let $N,u$ and $(E,F)$ be a ``proof of property (KFP) for $A$'', as in Theorem \ref{Strongrhoconstruction}. Then, by Remark \ref{vartheta}, the image of $\rho_\alpha^A=\vartheta ([(E\otimes V_{\alpha},F\otimes 1)\diamond _{\check u} (E\otimes V_n,F\otimes 1)])$ by this connecting map is $$(\bar p_0)_*[(E\otimes V_{\alpha},F\otimes 1)\diamond _{\check u} (E\otimes V_n,F\otimes 1)])-(\bar p_1)_*[(E\otimes V_{\alpha},F\otimes 1)\diamond _{\check u} (E\otimes V_n,F\otimes 1)]).$$ But $(\bar p_0)_*[(E\otimes V_{\alpha},F\otimes 1)\diamond _{\check u} (E\otimes V_n,F\otimes 1)])=(\bar p_0)_*[(E\otimes V_{\alpha},F\otimes 1)]=1_A^\Gamma\otimes[\alpha]^\Gamma$ and $(\bar p_1)_*[(E\otimes V_{\alpha},F\otimes 1)\diamond _{\check u} (E\otimes V_n,F\otimes 1)])=(\bar p_0)_*[(E\otimes V_n,F\otimes 1)]=n.1_A^\Gamma$.
\end{proof}\end{prop}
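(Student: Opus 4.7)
My plan is to invoke Remark \ref{vartheta}, which expresses the composite $\partial \circ \vartheta$ concretely as the difference $(\bar p_0)_* - (\bar p_1)_*$ of the two endpoint evaluations $\bar p_0, \bar p_1 : A \otimes Z_{i,i} \to A$ of the double cylinder. With this identification in hand, computing $\partial \rho_\alpha^A$ reduces to pushing the diamond Kasparov bimodule $[(E \otimes V_\alpha, F\otimes 1) \diamond_{\check u}(E \otimes V_n, F \otimes 1)]$ through each of the two endpoint projections and subtracting.

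The two pushforwards are straightforward consequences of the definition of $\diamond$ given in Proposition \ref{diamond}, together with the defining property of the Kasparov bimodule $(E,F)$ furnished by a ``proof of property (KFP) for $A$''. Near the first endpoint $\bar p_0$, the diamond restricts to $(E \otimes V_\alpha) \otimes_{\bar{\rm ev}_0 \otimes {\rm id}} A$; since $E \otimes_{\bar p_0} A = A$ with $F$ restricting to $0$, this bimodule is $(A \otimes V_\alpha, 0)$ and its class in $KK^\Gamma(A,A)$ is $1_A^\Gamma \otimes [\alpha]^\Gamma$. By the symmetric computation, the pushforward through $\bar p_1$ (the other free endpoint of $Z_{i,i}$) is the class of $(E \otimes V_n) \otimes_{\bar{\rm ev}_0 \otimes {\rm id}} A = (A \otimes V_n, 0)$, which equals $n \cdot 1_A^\Gamma$ because $\Gamma$ acts trivially on $V_n$. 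Taking the difference yields $\partial \rho_\alpha^A = [\alpha]^\Gamma - n \cdot 1_A^\Gamma$.

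I do not anticipate a serious obstacle, since the substantive content was already packaged into Remark \ref{vartheta} (the formula for $\partial \circ \vartheta$), Proposition \ref{diamond} (the endpoint restrictions of $\diamond$), and the constraints imposed on $(E,F)$ as part of a ``proof of (KFP)''. The only point requiring attention is bookkeeping: matching the two endpoints of the double cylinder $Z_{i,i}$ with the two single cylinders that it glues. This is unambiguous from the definition $E \diamond_w E' = \{(\xi,\zeta) : {\rm ev}_1(\xi) = w\, {\rm ev}_1(\zeta)\}$, which glues the legs at $t=1$; so the two ``free'' endpoints surviving on $\mathcal Z_j$ are precisely the two $\bar{\rm ev}_0$ evaluations on the two copies of $(E \otimes V_\alpha)$ and $(E \otimes V_n)$ respectively.
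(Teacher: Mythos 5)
Your proposal is correct and follows essentially the same route as the paper: both invoke Remark \ref{vartheta} to identify $\partial\circ\vartheta$ with $(\bar p_0)_*-(\bar p_1)_*$ on the double cylinder, and then evaluate the diamond bimodule at the two free endpoints using the defining property $E\otimes_{\bar p_0}A=A$ of the ``proof of (KFP)'' bimodule, obtaining $1_A^\Gamma\otimes[\alpha]^\Gamma$ and $n\cdot 1_A^\Gamma$ respectively. Your extra remark matching the two endpoints of $Z_{i,i}$ with the two $\overline{\rm ev}_0$ evaluations of the glued cylinders is exactly the bookkeeping the paper performs implicitly.
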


\subsection{Properties of the construction $\rho^A_\alpha$}

\subsubsection{Dependence on $\alpha$.}
\begin{prop}
Let $A$ be a $\Gamma$-algebra satisfying \SKFP. Let $\alpha_1, \alpha_2$ be finite dimensional unitary representations of $\Gamma$. 
We have 
\begin{enumerate}
\item $\rho_{\alpha_1\oplus \alpha_2}^A=\rho_{\alpha_1}^A+\rho_{\alpha_1}^A$
\item $\rho_{\alpha_1\otimes\alpha_2}^A=\dim \alpha_1 \cdot \rho_{\alpha_2}^A+\rho_{\alpha_1}^A\otimes _\C[\alpha_2]\;=\;\dim \alpha_2\cdot \rho_{\alpha_1}^A+[\alpha_1]\otimes _\C\rho_{\alpha_2}^A.$
\end{enumerate}

\begin{proof}
 The first statement is obvious. 
 
For the second one, put $n_i=\dim \alpha_i$. Let $(E,F)$ be as in Theorem \ref{Strongrhoconstruction}. Thanks to Proposition \ref{diamond}(\ref{diamondadd}), we may write
\begin{eqnarray*}
(E\otimes V_{\alpha_1\otimes \alpha_2},F\otimes 1)\diamond   (E\otimes V_{n_1n_2},F\otimes 1)&=&\Big((E\otimes V_{\alpha_1},F\otimes 1)\diamond   (E\otimes V_{n_1},F\otimes 1)\Big)\otimes V_{\alpha_2}\\&&+\ \Big((E\otimes V_{\alpha_2},F\otimes 1)\diamond   (E\otimes V_{n_2},F\otimes 1)\Big)\otimes V_{n_1}
\end{eqnarray*}
and the result follows.
\end{proof}
\end{prop}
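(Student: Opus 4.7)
For the first identity, I would reduce both $\rho_{\alpha_1}^A$ and $\rho_{\alpha_2}^A$ to a common ``proof of property (KFP)'' $(N,\lambda,(E,F))$; this is legitimate by the well-definedness part of Theorem \ref{Strongrhoconstruction}, possibly after enlarging $N$ to accommodate both unitaries. Choose $u_i \in N\otimes M_{n_i}(\C)$ implementing the twist for $\alpha_i$, and set $u = u_1 \oplus u_2 \in N\otimes M_{n_1+n_2}(\C)$, which plainly implements the twist for $\alpha_1\oplus\alpha_2$. Under the canonical decompositions $V_{\alpha_1\oplus\alpha_2}=V_{\alpha_1}\oplus V_{\alpha_2}$ and $V_{n_1+n_2}=V_{n_1}\oplus V_{n_2}$, the diamond bimodule defining $\rho_{\alpha_1\oplus\alpha_2}^A$ is the orthogonal direct sum of those defining $\rho_{\alpha_1}^A$ and $\rho_{\alpha_2}^A$. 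Additivity of Kasparov classes under direct sum then yields the formula.

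For the tensor product identity, my plan is to exploit the multiplicative factorization
\[
u_1\otimes u_2 \;=\; (u_1\otimes 1_{V_{n_2}})\cdot(1_{V_{\alpha_1}}\otimes u_2)
\]
together with Proposition \ref{diamond}(\ref{diamondadd}). Telescoping through the intermediate module $E\otimes V_{\alpha_1}\otimes V_{n_2}$, that proposition splits $\Theta$ of the diamond for $\rho_{\alpha_1\otimes\alpha_2}^A$ as a sum of two terms: the piece glued by $1\otimes u_2$ (acting trivially on the $V_{\alpha_1}$ slot) and the piece glued by $u_1\otimes 1$ (acting trivially on the $V_{n_2}$ slot). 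The first term factors as the external Kasparov product $[\alpha_1]\otimes_\C \rho_{\alpha_2}^A$, while the second factors as $\rho_{\alpha_1}^A\otimes_\C [V_{n_2}] = n_2\,\rho_{\alpha_1}^A$. This yields the formula $n_2\rho_{\alpha_1}^A + [\alpha_1]\otimes_\C\rho_{\alpha_2}^A$; the second presentation in the statement follows either by the symmetric factorization $u_1\otimes u_2 = (1_{V_{n_1}}\otimes u_2)(u_1\otimes 1_{V_{\alpha_2}})$ or by invoking $\alpha_1\otimes\alpha_2 \cong \alpha_2\otimes\alpha_1$ and applying the same argument with the roles swapped.

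The main point to check will be that, when the gluing unitary acts trivially on one tensor factor, the resulting diamond bimodule really does factor as an external product of a diamond and an identity piece; this is where the identification with $[\alpha_1]\otimes_\C\rho_{\alpha_2}^A$ (resp.\ $\rho_{\alpha_1}^A\otimes_\C [V_{n_2}]$) comes from. I do not expect a genuine conceptual obstacle here, since Proposition \ref{diamond}(\ref{diamondadd}) is engineered precisely to handle such multiplicative decompositions of the gluing unitary; the remainder is bookkeeping on the Hilbert module side, together with the fact that external Kasparov product respects the $\diamond$-construction when one of the factors has trivial data.
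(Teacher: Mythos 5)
Your proposal is correct and follows essentially the same route as the paper: part (1) is the direct-sum decomposition the authors dismiss as obvious, and part (2) is exactly the telescoping of the gluing unitary through an intermediate module via Proposition \ref{diamond}(\ref{diamondadd}) — the paper inserts $E\otimes V_{n_1}\otimes V_{\alpha_2}$ to land on the first expression, while you insert $E\otimes V_{\alpha_1}\otimes V_{n_2}$ to land on the second, and you correctly note the two are interchangeable by symmetry. The only point you flag as needing verification (that a gluing unitary acting trivially on one tensor slot makes the diamond factor as an external product) is indeed the implicit bookkeeping step the paper also takes for granted.
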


\subsubsection{Change of algebra. Kasparov product.} 

\begin{prop}
Let $A,B$ be $\Gamma$-algebras satisfying \SKFP. Let $f:A\to B$ be an equivariant homomorphism. For every finite dimensional unitary representation $\alpha $ of $\Gamma$, we have $f_*(\rho_\alpha^A)=f^*(\rho_\alpha^B)\in KK^{1,\Gamma}_{\R/\Z}(A,B)$.
\end{prop}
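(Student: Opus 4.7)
The plan is to compute both $f_*(\rho_\alpha^A)$ and $f^*(\rho_\alpha^B)$ from a single ``proof of (KFP)'' that works simultaneously for $A$ and $B$, and then to identify them via a mild extension of the independence statement in Theorem~\ref{Strongrhoconstruction}(\ref{indepe}).

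First, I would build a common ${\rm II}_1$-factor $N$, a tracial morphism $\lambda\colon C^*\Gamma\to N$ and a unitary $u\in N\otimes M_n(\C)$ with $(\lambda_g\otimes \alpha_g)u=u(\lambda_g\otimes 1)$ for all $g\in\Gamma$, together with Kasparov bimodules $(E_A,F_A)\in {\bf E}^\Gamma(A,A\otimes Z_i)$ and $(E_B,F_B)\in {\bf E}^\Gamma(B,B\otimes Z_i)$ witnessing (KFP) for $A$ and for $B$. One produces such common data by starting from separate proofs for $A$ and $B$, embedding both factors into a single ${\rm II}_1$-factor via a tracial free product (property~(2) of Section~1.3), then using an HNN construction (property~(3)) so that the two embeddings of $C^*\Gamma$ coincide, and finally enlarging $N$ as in Lemma~\ref{strongunitary} so that the unitary $u$ exists. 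By Theorem~\ref{Strongrhoconstruction}(\ref{indepe}), these enlargements change neither $\rho_\alpha^A$ nor $\rho_\alpha^B$.

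Next, I would produce two Kasparov bimodules in ${\bf E}^\Gamma(A,B\otimes Z_i)$, both joining $(B,0)$ and $(B\otimes P_\lambda,0)$: the push-forward $f_*(E_A,F_A):=\bigl(E_A\otimes_{A\otimes Z_i}^{f\otimes\id}(B\otimes Z_i),\,F_A\otimes 1\bigr)$, and the pull-back $f^*(E_B,F_B)$ obtained by restricting the left $B$-action of $(E_B,F_B)$ along $f$. By naturality of the diamond construction (Proposition~\ref{diamond}) and of $\vartheta$ in the target algebra, one has
\[
f_*(\rho_\alpha^A)=\vartheta\bigl([(f_*E_A\otimes V_\alpha)\diamond_{\check u}(f_*E_A\otimes V_n)]\bigr),
\]
and similarly $f^*(\rho_\alpha^B)=\vartheta\bigl([(f^*E_B\otimes V_\alpha)\diamond_{\check u}(f^*E_B\otimes V_n)]\bigr)$, with $\check u=\id_B\otimes u$, both elements of $KK^{1,\Gamma}_{\R/\Z}(A,B)$.

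Finally, the proof of Theorem~\ref{Strongrhoconstruction}(\ref{indepe})(\ref{indepeEF}) carries over verbatim to show that the $\vartheta$-class in $KK^{1,\Gamma}_{\R/\Z}(A,B)$ of such a diamond depends only on the endpoints $(B,0)$ and $(B\otimes P_\lambda,0)$ and not on the particular Kasparov $A$-$(B\otimes Z_i)$-bimodule chosen. Indeed, using Proposition~\ref{diamond}(\ref{trivialdiamond}) and (\ref{diamondadd}), the difference of the two diamond classes is the external product of $[V_\alpha]-[V_n]$ with a class in $KK^\Gamma(A,B\otimes Z_{i,i})$ whose two endpoint evaluations both equal $[f]^\Gamma$; hence this class lifts to $KK^\Gamma(A,B\otimes SN)$, and since $[V_\alpha]-[V_n]=0$ in $KK^\Gamma_\R(\C,\C)$ the external product vanishes in $KK^{1,\Gamma}_{\R/\Z}(A,B)$. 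Applying this to $f_*(E_A,F_A)$ and $f^*(E_B,F_B)$ yields $f_*(\rho_\alpha^A)=f^*(\rho_\alpha^B)$. The main obstacle is this last verification that the independence argument goes through unchanged with distinct source and target algebras, but this is essentially formal since the argument only uses the bimodule structure over the common target $B\otimes Z_i$ and the agreement of the two endpoints.
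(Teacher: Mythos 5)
Your proposal is correct and follows essentially the same route as the paper: the paper deduces this proposition from the more general Proposition~\ref{changealg} ($\rho_\alpha^A\otimes_A x = x\otimes_B\rho_\alpha^B$ for any $x\in KK^\Gamma(A,B)$), whose proof likewise arranges a common $(N,\lambda,u)$ by amalgamated free products, produces two bimodules in ${\bf E}^\Gamma(A,B\otimes Z_i)$ with the same endpoints (one built from the $A$-side proof of (KFP), one from the $B$-side), and concludes by the observation that the independence argument of Theorem~\ref{Strongrhoconstruction}(\ref{indepeEF}) applies verbatim to a connecting bimodule with distinct source and target. Your specialization to $x=[f]$ merely replaces the general Kasparov products by push-forward/pull-back along $f$, which is the same argument in a slightly simpler form.
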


This is a particular case of the following more general result.

\begin{prop}\label{changealg}
Let $A,B$ be $\Gamma$-algebras satisfying \SKFP\ and $x\in KK^\Gamma(A,B)$. For every finite dimensional unitary representation $\alpha $ of $\Gamma$, we have $\rho_\alpha^A\otimes _Ax=x\otimes_B\rho_\alpha^B\in KK^{1,\Gamma}_{\R/\Z}(A,B)$.
\begin{proof}
 Let $(E,F)\in {\bf E}^\Gamma(A,B)$ representing $x$. 
 
 Let $N,\lambda,(E_A,F_A)$ be a ``proof of the property (KFP) for $A$''.
As $(E_A,F_A)\in {\bf E}^\Gamma(A,A\otimes Z_i)$ is a Kasparov bimodule joining $A$ to $A\otimes P_\lambda$, the Kasparov product $(E_A,F_A)\otimes _A(E,F)$ can be represented by a Kasparov bimodule $(\widetilde E,\widetilde F)\in {\bf E}^\Gamma(A,B\otimes Z_i)$ such that $\bar p_0(\widetilde E,\widetilde F)=(E,F)$ and $p_1(\widetilde E,\widetilde F)=(E\otimes P_\lambda,F\otimes 1)$. It is then immediately checked (using for instance the connexion and positivity approach of \cite{CS}) that $$[(\widetilde E\otimes V_{\alpha},\widetilde F\otimes 1)\diamond _{1_{E}\otimes  u} (\widetilde E\otimes V_n,\widetilde F\otimes 1)]$$ is a Kasparov product  $$[(E_A\otimes V_{\alpha}, F_A\otimes 1)\diamond _{1_{E}\otimes  u} ( E\otimes V_n,F\otimes 1)]\otimes _A[(E,F)]\in {\bf E}^\Gamma(A,A\otimes Z_{i,i}).$$

Therefore $\rho_A^\alpha\otimes _Ax$ is the image of the class of $$[(\widetilde E\otimes V_{\alpha},\widetilde F\otimes 1)\diamond _{1_{E}\otimes  u} (\widetilde E\otimes V_n,\widetilde F\otimes 1)]$$ in $KK^{1,\Gamma}_{\R/\Z}(A,B)$.

In the same way, let $M,\lambda_M,(E_B,F_B)$ be a ``proof of the property (KFP) for $B$''. We may of course assume (replacing $M$ and $N$ if necessary by a factor containing the free product amalgamated over the group von Neumann algebra of $\Gamma$) that $M=N$ and $\lambda _M=\lambda$.

In the same way as above, the Kasparov product $(E,F)\otimes _B(E_A,F_A)$ can be represented by a Kasparov bimodule $(\widehat E,\widehat F)\in {\bf E}^\Gamma(A,B\otimes Z_i)$ such that $\bar p_0(\widehat E,\widehat F)=(E,F)$ and $p_1(\widehat E,\widehat F)=(E\otimes P_\lambda,F\otimes 1)$. It follows that $x\otimes _B\rho_B^\alpha$ is the image of the class
 $$[(\widehat E\otimes V_{\alpha},\widehat F\otimes 1)\diamond _{1_{E}\otimes  u}  (\widehat E\otimes V_n,\widehat F\otimes 1)]$$ in $KK^{1,\Gamma}_{\R/\Z}(A,B)$.

To conclude, note that it follows from the proof of Theorem \ref{Strongrhoconstruction} (\ref{indepeEF}) that the image of $$[(\check E\otimes V_{\alpha},\check F\otimes 1)\diamond _{1_{E}\otimes  u} (\check E\otimes V_n,\check F\otimes 1)]$$ in $KK^{1,\Gamma}_{\R/\Z}(A,B)$ does not depend on $(\check E,\check F)\in {\bf E}^\Gamma(A,B\otimes Z_i)$ joining $(E,F)$ to $(E,F)\otimes P_\lambda$.
 \end{proof}
\end{prop}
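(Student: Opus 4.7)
The plan is to realize both sides of the claimed equality as the image under the map $\vartheta$ of suitable diamond classes lying in $KK^\Gamma(A,B\otimes Z_{i,i})$, and then to invoke (a minor generalization of) the independence statement of Theorem~\ref{Strongrhoconstruction}(\ref{indepe})(\ref{indepeEF}) to identify them. First I would choose proofs of (KFP) for $A$ and $B$ with a common witness: $\mathrm{II}_1$-factors $N_A,N_B$ with tracial morphisms $\lambda_A,\lambda_B:C^*\Gamma\to N_\bullet$ and Kasparov bimodules $(E_A,F_A)\in\mathbf{E}^\Gamma(A,A\otimes Z_i)$, $(E_B,F_B)\in\mathbf{E}^\Gamma(B,B\otimes Z_i)$ joining the trivial endpoints to $(A\otimes P_{\lambda_A},0)$ and $(B\otimes P_{\lambda_B},0)$ respectively. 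Replacing $N_A,N_B$ by (a $\mathrm{II}_1$-factor containing) their free product amalgamated over the group von Neumann algebra of $\Gamma$, I may assume $N_A=N_B=N$ and $\lambda_A=\lambda_B=\lambda$; I also fix a unitary $u\in N\otimes M_n(\C)$ satisfying $(\lambda_g\otimes\alpha_g)u=u(\lambda_g\otimes 1)$ for all $g\in\Gamma$, as provided by Lemma~\ref{strongunitary}.

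Next I would represent $x$ by $(E,F)\in\mathbf{E}^\Gamma(A,B)$ and form the two Kasparov products
$$
(\widetilde E,\widetilde F):=(E_A,F_A)\otimes_A(E,F),\qquad (\widehat E,\widehat F):=(E,F)\otimes_B(E_B,F_B),
$$
both of which lie in $\mathbf{E}^\Gamma(A,B\otimes Z_i)$. Inspecting the induced bimodules at the endpoints $t=0,1$ (using the connection and positivity formalism, \emph{cf.}~\cite{CS}) shows that each of them joins $(E,F)$ to $(E\otimes P_\lambda,F\otimes 1)$, and that the associated diamond classes
$$
[(\widetilde E\otimes V_\alpha,\widetilde F\otimes 1)\diamond_{1_E\otimes u}(\widetilde E\otimes V_n,\widetilde F\otimes 1)]\ \ \text{and}\ \ [(\widehat E\otimes V_\alpha,\widehat F\otimes 1)\diamond_{1_E\otimes u}(\widehat E\otimes V_n,\widehat F\otimes 1)]
$$
in $KK^\Gamma(A,B\otimes Z_{i,i})$ map under $\vartheta$ respectively to the Kasparov products $\rho_\alpha^A\otimes_A x$ and $x\otimes_B\rho_\alpha^B$ in $KK^{1,\Gamma}_{\R/\Z}(A,B)$.

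The key step, and the principal obstacle, is to verify that these two diamond classes have the same image under $\vartheta$. This requires an analog of Theorem~\ref{Strongrhoconstruction}(\ref{indepe})(\ref{indepeEF}) with $B$ replacing $A$ on the target side: for any two elements of $\mathbf{E}^\Gamma(A,B\otimes Z_i)$ joining the same endpoints $(E,F)$ and $(E\otimes P_\lambda,F\otimes 1)$, the associated diamond classes in $KK^\Gamma(A,B\otimes Z_{i,i})$ have the same $\vartheta$-image in $KK^{1,\Gamma}_{\R/\Z}(A,B)$. The argument transposes verbatim from that theorem: by Proposition~\ref{diamond}(\ref{diamondadd}) and (\ref{trivialdiamond}), the difference of the two diamond classes reduces to an expression of the form $\Theta(z)\otimes_\C([V_\alpha]-[V_n])$ for some $z\in KK^\Gamma(A,B\otimes SN)$, and this image vanishes in $KK^{1,\Gamma}_{\R/\Z}(A,B)$ because property (KFP) for $A$ combined with Remark~\ref{strongprodtraces} forces $1_A^\Gamma\otimes([V_\alpha]-[V_n])=0$ in $KK^\Gamma_\R(A,A)$.
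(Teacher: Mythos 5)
Your proposal is correct and follows essentially the same route as the paper's own proof: representing both sides as $\vartheta$-images of diamond classes built from the two Kasparov products $(E_A,F_A)\otimes_A(E,F)$ and $(E,F)\otimes_B(E_B,F_B)$, unifying the two $\mathrm{II}_1$-factors via an amalgamated free product, and concluding by the independence argument of Theorem~\ref{Strongrhoconstruction}(\ref{indepeEF}) transposed to bimodules in ${\bf E}^\Gamma(A,B\otimes Z_i)$. Your explicit justification of why that independence argument carries over to target $B$ (reducing to $\Theta(z)\otimes_\C([V_\alpha]-[V_n])$ and using (KFP) for $A$) is exactly the content the paper leaves implicit.
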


\section{Examples}

\subsection{The Mishchenko bundle}

As explained in the introduction, our starting example comes from the Mishchenko bundle associated with a Galois cover of a compact manifold.

\begin{prop}\label{Miscase}
Assume that the group $\Gamma $ acts freely, properly and cocompactly on a smooth manifold $\widetilde V$.
Then $C_0(\widetilde V)$ satisfies \SKFP.
\begin{proof}
 In \cite[Proposition 5.1]{AAS}, we noted that there is a ${\rm II}_1$-factor $N$ with a morphism $\lambda :\Gamma \to N$ such that $tr_N\circ \lambda ={\bf tr}$ together with a  continuous map $\psi$ from $\widetilde V$ to the unitary group $U(N)$ of $N$ such that $\psi (g x)=\lambda(g)\psi (x)$ for all $g\in G$ and $x\in \widetilde V$. It gives rise to a unitary $w\in \cL(C_0(\widetilde V)\otimes N)$ intertwining the $\Gamma$-actions $\beta\otimes \id_N$ and $\beta\otimes \lambda$ on $C_0(\widetilde V)\otimes N$ where $\beta$ is the action of $\Gamma$ on $C_0(\widetilde V)$ by translation. Now the Hilbert module $C_0(\widetilde V)\otimes N$ endowed with the left action of $C_0(\widetilde V)$ equal to the right one and the $\Gamma$-action $\beta\otimes \id_N$ (and  $\beta\otimes \lambda$) represent respectively the images $1_{C_0(\widetilde V)}^{\Gamma}$ and $1_{C_0(\widetilde V)}^{\Gamma}\otimes {\bf tr}$ in $KK_\R^\Gamma(C_0(\widetilde V),C_0(\widetilde V))$.
 
These equivariant bimodules being unitarily equivalent they represent the same class in $KK_\R^\Gamma(C_0(\widetilde V),C_0(\widetilde V))$.
\end{proof}
\end{prop}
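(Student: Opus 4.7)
The plan is to exhibit a $\mathrm{II}_1$-factor $N$ with a trace-preserving morphism $\lambda \colon C^*\Gamma \to N$ (so $\mathrm{tr}_N \circ \lambda = \mathbf{tr}$) together with a continuous, $\Gamma$-equivariant map $\psi \colon \widetilde V \to U(N)$, where $\Gamma$ acts on $U(N)$ via $\lambda$ (i.e.\ $\psi(gx) = \lambda(g)\psi(x)$). Given such data, I would use pointwise multiplication by $\psi$ to build a unitary between two explicit Hilbert $C_0(\widetilde V) \otimes N$-modules, showing that $1_{C_0(\widetilde V)}^\Gamma$ and $1_{C_0(\widetilde V)}^\Gamma \otimes [\mathbf{tr}]^\Gamma$ have the same image in $KK^\Gamma(C_0(\widetilde V), C_0(\widetilde V) \otimes N)$, hence coincide in $KK^\Gamma_\R(C_0(\widetilde V), C_0(\widetilde V))$.

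First I would unpack the two classes concretely. With $N$ carrying the trivial $\Gamma$-action, $[\mathbf{tr}]^\Gamma \in KK^\Gamma_\R(\C,\C)$ has the representative $P_\lambda = N$, the standard Hilbert $N$-module with $\Gamma$-action $g \cdot n = \lambda(g) n$. Taking the external Kasparov product with $1^\Gamma_{C_0(\widetilde V)}$ and tensoring with $N$ respectively, both $1^\Gamma_{C_0(\widetilde V)} \otimes [\mathbf{tr}]^\Gamma$ and $1^\Gamma_{C_0(\widetilde V)}$ are realised on the same underlying Hilbert $C_0(\widetilde V) \otimes N$-module, namely $C_0(\widetilde V) \otimes N$ with left $C_0(\widetilde V)$-action by multiplication on the first factor. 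They only differ in the $\Gamma$-action: the former carries $\beta \otimes \lambda$, i.e.\ $g\cdot(f \otimes n) = \beta_g(f) \otimes \lambda(g)n$, while the latter carries $\beta \otimes \mathrm{id}_N$.

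Second, the unitary $w \in \cL(C_0(\widetilde V) \otimes N)$ defined by $(w\xi)(x) = \psi(x)\xi(x)$ clearly commutes with the left $C_0(\widetilde V)$-action and the right $C_0(\widetilde V) \otimes N$-action. The intertwining property for $\Gamma$ amounts to the direct check
\[
(\beta_g \otimes \lambda)(w\xi)(x) = \lambda(g) \psi(g^{-1}x) \xi(g^{-1}x) = \psi(x)\xi(g^{-1}x) = w(\beta_g \otimes \mathrm{id})\xi(x),
\]
using the equivariance of $\psi$. Thus the two bimodules are unitarily equivalent, proving \SKFP.

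The main obstacle is the construction of the equivariant map $\psi$. Since $\Gamma$ acts freely, properly, and cocompactly, $\widetilde V \to V = \widetilde V/\Gamma$ is a principal $\Gamma$-bundle over a compact base, and a $\Gamma$-equivariant $\psi$ is a continuous section of the associated bundle with fibre $U(N)$. This requires enlarging $N$ so that $U(N)$ becomes sufficiently contractible to guarantee existence of a section over the finite-dimensional compact $V$; in practice one takes a free product of the group von Neumann algebra of $\Gamma$ (to get $\lambda$ trace preserving) with an auxiliary $\mathrm{II}_1$-factor whose unitary group has the requisite connectivity. I would simply cite [AAS, Proposition 5.1] for this existence result and feed it into the mechanism above.
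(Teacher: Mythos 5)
Your proposal is correct and follows essentially the same route as the paper: both cite \cite[Proposition 5.1]{AAS} for the existence of the ${\rm II}_1$-factor $N$, the tracial morphism $\lambda$, and the equivariant map $\psi:\widetilde V\to U(N)$, then use pointwise multiplication by $\psi$ to produce a unitary intertwining the $\Gamma$-actions $\beta\otimes\id_N$ and $\beta\otimes\lambda$ on the bimodule $C_0(\widetilde V)\otimes N$ representing the two classes. Your explicit verification of the intertwining identity is a detail the paper leaves implicit, but the argument is identical in substance.
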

Thus, for every finite dimensional unitary representation $\alpha :\Gamma\to U_n$, we get an element $\rho_\alpha ^{C_0(\widetilde V)}\in KK^{1,\Gamma}_{\R/\Z}(C_0(\widetilde V),C_0(\widetilde V))$.

We now explain the relation of this element with the element $[\alpha]_{\textrm{APS}}\in K^1_{\R/\Z}(C(V))$ constructed by Atiyah Patodi and Singer \cite{APS2, APS3}.

\medskip 

Let $Y$ be a locally compact space on which $\Gamma$ acts and $A$ and $B$ be $C_0(Y)-\Gamma$-algebras. Denote by $\mathcal{R}KK^\Gamma(Y;A,B)$ the Kasparov equivariant $KK$-theory group defined in \cite[\S 2]{Kas2}. In the language of $KK$-theory equivariant by groupoids introduced by Le Gall (\cite{LeGall}), $\mathcal{R}KK^\Gamma(Y;A,B)=KK^{\widetilde V\rtimes \Gamma}(A,B)$. 

If $\Gamma$ acts freely and properly on $Y$ there is a descent isomorphism (see \cite{Kas2}) $$\lambda^\Gamma:\mathcal{R}KK^\Gamma(Y;A,B)\longrightarrow \mathcal{R}(Y/\Gamma;A^\Gamma,B^\Gamma)$$ where $A^\Gamma$ and $B^\Gamma$ are the algebras of \emph{invariant elements}. In the language of \cite{LeGall}, this is the isomorphism corresponding to the Morita equivalent groupoids $Y\rtimes \Gamma$ and $Y/\Gamma$.

Let $X$ be a compact space, $A$ a $C^*$-algebra and $B$ be a $C(X)$-algebra. Then we have a canonical isomorphism $\mathcal{R}KK(X;A\otimes C(X),B)=KK(A,B)$. 

Let $Y$ be a free and proper and cocompact $\Gamma$-space. Put $X=Y/\Gamma$. Let $A$ be a $C^*$-algebra and let $B$ be a $C_0(Y) -\Gamma$-algebra. We have a sequence of isomorphisms
$$KK(A,B^\Gamma)\simeq \mathcal{R}KK(X;A\otimes C(X),B^\Gamma)\simeq\mathcal{R}KK^\Gamma(Y;A\otimes C_0(Y), B).$$
Using the forgetful map $\mathcal{R}KK^\Gamma(Y;A\otimes C_0(Y),B)\to  KK^\Gamma(A\otimes C_0(Y), B)$ we obtain a morphism $U_{A,B}^{Y,\Gamma}:KK(A,B^\Gamma)\to KK^\Gamma(A\otimes C_0(Y),B)$.

Using the Morita equivalence of $C_0(Y)\rtimes \Gamma$ with $C(X)$ and of $B\rtimes \Gamma$ with $B^\Gamma$ we also have a sequence of maps 
$$
 KK^\Gamma(A\otimes C_0(Y),B) \overset{j_\Gamma}\longrightarrow  KK(A\otimes C_0(Y)\rtimes \Gamma,B \rtimes \Gamma) 
\simeq  KK(A\otimes C(X),B^\Gamma)\overset{i^*_X}\longrightarrow KK(A,B^\Gamma)
$$
where $i_X:\C\to C(X)$ is the unital inclusion. We thus obtain a morphism: $$V_{A,B}^{Y,\Gamma}:KK^\Gamma(A\otimes C_0(Y),B)\to KK(A,B^\Gamma).$$

\begin{lem}\label{lemMishchAPS}
The composition  $V_{A,B}^{Y,\Gamma}\circ U_{A,B}^{Y,\Gamma}$ is the identity of $KK(A,B^\Gamma)$.
\end{lem}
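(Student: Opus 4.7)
The plan is to deduce the identity by establishing the commutativity of the square
\[
\begin{CD}
\mathcal{R}KK^\Gamma(Y;A\otimes C_0(Y),B) @>\lambda^\Gamma>> \mathcal{R}KK(X;A\otimes C(X),B^\Gamma) \\
@VVV @VVV \\
KK^\Gamma(A\otimes C_0(Y),B) @>>\Phi> KK(A\otimes C(X),B^\Gamma)
\end{CD}
\]
where the vertical maps are the canonical forgetful maps and $\Phi$ is the composition of Kasparov's descent $j_\Gamma$ with the isomorphism $KK(A\otimes C_0(Y)\rtimes\Gamma,B\rtimes\Gamma)\simeq KK(A\otimes C(X),B^\Gamma)$ coming from the Morita equivalences $C_0(Y)\rtimes\Gamma\sim_M C(X)$ and $B\rtimes\Gamma\sim_M B^\Gamma$ (available because the action is free and proper). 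Note that $\Phi$ is precisely $V_{A,B}^{Y,\Gamma}$ minus the final $i_X^*$, while the bottom-left vertical arrow is precisely the forgetful step at the end of $U_{A,B}^{Y,\Gamma}$.

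Granted this square, $V_{A,B}^{Y,\Gamma}\circ U_{A,B}^{Y,\Gamma}$ unwinds as follows: starting from $x\in KK(A,B^\Gamma)$, lift to $\widetilde x\in \mathcal{R}KK(X;A\otimes C(X),B^\Gamma)$ via the canonical isomorphism, apply $(\lambda^\Gamma)^{-1}$, then forget, then $\Phi$, then $i_X^*$. By commutativity, ``forget$\circ\Phi$'' on $(\lambda^\Gamma)^{-1}(\widetilde x)$ equals ``forget'' applied directly to $\widetilde x$, so the $(\lambda^\Gamma)^{-1}$ and the implicit $\lambda^\Gamma$ inside $\Phi$ cancel. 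The whole composition reduces to the roundtrip
\[
KK(A,B^\Gamma)\xrightarrow{\simeq}\mathcal{R}KK(X;A\otimes C(X),B^\Gamma)\xrightarrow{\text{forget}}KK(A\otimes C(X),B^\Gamma)\xrightarrow{i_X^*}KK(A,B^\Gamma),
\]
and this is the identity on cycles: a representative $(E,F)$ of $x$ carries a canonical $C(X)$-structure from the $C(X)$-algebra structure on $B^\Gamma$, so the induced $A\otimes C(X)$-action is $(a\otimes f)\cdot\xi=f\cdot(a\xi)$; forgetting leaves the bimodule unchanged, and then $i_X^*$ restricts the $A\otimes C(X)$-action to $A\otimes \C$, giving back the original $A$-action.

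The main obstacle is the commutativity of the square, that is, the compatibility between Kasparov's group-descent and the groupoid descent isomorphism $\lambda^\Gamma$ for the Morita equivalent groupoids $Y\rtimes \Gamma$ and $X$. At the level of cycles this is standard but requires care: given an equivariant Kasparov bimodule $(\widetilde E,\widetilde F)$ representing a class in $\mathcal{R}KK^\Gamma(Y;A\otimes C_0(Y),B)$, its image under $\lambda^\Gamma$ followed by ``forget'' is represented by the Hilbert $B^\Gamma$-module of $\Gamma$-invariants $\widetilde E^\Gamma$ equipped with the restriction of $\widetilde F$, while its image under ``forget'' followed by $\Phi$ is represented by the crossed product $\widetilde E\rtimes\Gamma$ tensored with the $(B\rtimes\Gamma,B^\Gamma)$-imprimitivity bimodule. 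The identification of these two cycles in $KK(A\otimes C(X),B^\Gamma)$ follows by a direct inspection using Rieffel's explicit form of the imprimitivity bimodules associated to free and proper actions, and is essentially the content of the Morita compatibility recalled from \cite{Kas2,LeGall}.
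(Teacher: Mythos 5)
Your proposal is correct and follows essentially the same route as the paper: both arguments reduce the statement to chasing a cycle $(E,F)$ through the two compositions and identifying $(E\rtimes\Gamma)\otimes_{B\rtimes\Gamma}(\text{imprimitivity bimodule})$ with the invariant module $E^\Gamma$, after arranging (by properness) that $F$ is $\Gamma$-invariant so that it restricts to $E^\Gamma$. The paper performs this as a direct cycle-level computation rather than isolating the commutative square, but the mathematical content is the same.
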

\begin{proof}
Since $X=Y/\Gamma $ is compact, $B^\Gamma$ is the set of multipliers $b$ of $B$ which are $\Gamma$-invariant and satisfy $fb\in B$ for all $f\in C_0(Y)$. If $E$ is a $\Gamma$-equivariant Hilbert $B$ module, we may in the same way define the Hilbert $B^\Gamma$ module $E^\Gamma$, for instance by considering $E$ as a subspace of the $\Gamma$--$C_0(Y)$-algebra $\cK(E\oplus B)$ and then $E^\Gamma\subset \cK(E\oplus B)^\Gamma$.

We start with a pair $(E,F)$ representing an element in $\mathcal{R}KK^\Gamma(Y;A\otimes C_0(Y), B)$; by properness of the action (or, equivalently using the isomorphism $\mathcal{R}KK(X;A\otimes C(X),B^\Gamma)\simeq\mathcal{R}KK^\Gamma(Y;A\otimes C_0(Y), B)$), we may assume $F$ is $\Gamma $-invariant. It then defines an element $F_0\in \cL(E^\Gamma)$. The elements $(E^\Gamma,F_0)$ and $(E,F)$ correspond to each other via the isomorphism $\mathcal{R}KK(X;A\otimes C(X),B^\Gamma)\simeq\mathcal{R}KK^\Gamma(Y;A\otimes C_0(Y), B).$

The element corresponding to $(E^\Gamma,F_0)$ via the isomorphism $\mathcal{R}KK(X;A\otimes C(X),B^\Gamma)\simeq KK(A,B^\Gamma)$ is of course $(E^\Gamma,F_0)$ where we just do not consider the left $C(X)$ action !

The element corresponding to $(E,F)$ under the forgetful morphism $\mathcal{R}KK^\Gamma(Y;A\otimes C_0(Y),B)\to  KK^\Gamma(A\otimes C_0(Y), B)$ is of course $(E,F)$; its image by $j_\Gamma$ is $(E\rtimes \Gamma,F\otimes 1)$ where $E\rtimes \Gamma=E\otimes_B(B\rtimes \Gamma)$. The Hilbert  $(C(X)\otimes A,B^\Gamma)$ bimodule corresponding to it under the Morita equivalences, is then seen to be $E^\Gamma$ - and the Lemma follows. 
\end{proof}


According to Remark \ref{remequ}, we define $\mathcal{R}KK^\Gamma_{\mathbb{R}}(V;A,B)$ and $\mathcal{R}KK^\Gamma_{\R/\mathbb{Z}}(V;A,B)$ using inductive limits over ${\rm II}_1$-factors.

The APS element in $K^1_{\R/\Z}(V)$ was described in \cite{AAS} as an element $[\alpha]_{\textrm{APS}}\in K_0(Z_{i,i}\otimes C(V))$.

\begin{prop}\label{mischAPS} 
The element $\rho_\alpha^{C_0(\widetilde V)}$ is the image of $[\alpha]_{\textrm{APS}}$ under the composition of the isomorphisms $$K_{1,\R/\Z}(C(V))\simeq \mathcal{R}KK_{\R/\Z}^1(V;C(V),C(V))\simeq \mathcal{R}KK_{\R/\Z}^{1,\Gamma}(\widetilde V;C_0(\widetilde V),C_0(\widetilde V)\otimes Z_{i,i})$$ with the forgetful map
$$\mathcal{R}KK_{\R/\Z}^{1,\Gamma}(\widetilde V;C_0(\widetilde V),C_0(\widetilde V))\to KK_{\R/\Z}^{1,\Gamma}(C_0(\widetilde V),C_0(\widetilde V)).$$
\end{prop}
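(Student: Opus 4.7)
The plan is to show that both $[\alpha]_{\textrm{APS}}$ (after traversing the chain of isomorphisms and the forgetful map) and $\rho_\alpha^{C_0(\widetilde V)}$ can be represented by \emph{the same} explicit $\Gamma$-equivariant Kasparov cycle, namely the one built from the gluing data $(\lambda,\psi,u)$ already used in Proposition~\ref{Miscase} to establish \SKFP\ for $C_0(\widetilde V)$.

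First I would unpack the description of $[\alpha]_{\textrm{APS}}$ from \cite{AAS}: it is represented in $K_0(C(V)\otimes Z_{i,i})$ by gluing the flat bundle $\widetilde V\times_\alpha \C^n$ to the trivial bundle $V\times \C^n$ along the boundary of the cylinder by means of the unitary $\psi\otimes 1\cdot 1\otimes u \in C(V)\otimes N\otimes M_n(\C)$, where $\psi:\widetilde V\to U(N)$ satisfies $\psi(gx)=\lambda(g)\psi(x)$ and $u\in N\otimes M_n(\C)$ satisfies $(\lambda_g\otimes\alpha_g)u=u(\lambda_g\otimes 1)$. Both unitaries are the very ingredients entering the definition of $\rho_\alpha^{C_0(\widetilde V)}$: $\psi$ produces the unitary $w$ intertwining $\beta\otimes\id_N$ and $\beta\otimes\lambda$ that provides the Kasparov bimodule $(E,F)$ joining $C_0(\widetilde V)$ and $C_0(\widetilde V)\otimes P_\lambda$ in ${\bf E}^\Gamma(C_0(\widetilde V),C_0(\widetilde V)\otimes Z_i)$, while $u$ provides the intertwiner $\check u=\id_{C_0(\widetilde V)}\otimes u$ used in the diamond construction.

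Next I would apply the chain of isomorphisms. The equality $\mathcal{R}KK_{\R/\Z}^{1,\Gamma}(\widetilde V;C_0(\widetilde V),C_0(\widetilde V)\otimes Z_{i,i})\simeq K_{1,\R/\Z}(C(V))$ is the $\R/\Z$-version of Lemma~\ref{lemMishchAPS}, obtained by combining the Morita equivalence $C_0(\widetilde V)\rtimes \Gamma\sim C(V)$ with Kasparov's descent and the identification $C_0(\widetilde V)^\Gamma=C(V)$. Under this chain, the $C(V)$-bundle gluing representing $[\alpha]_{\textrm{APS}}$ lifts to the $\Gamma$-equivariant bundle on $\widetilde V$ obtained from the same gluing data, now read $\Gamma$-equivariantly on $\widetilde V$ rather than on the quotient $V$. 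The forgetful map to $KK_{\R/\Z}^{1,\Gamma}(C_0(\widetilde V),C_0(\widetilde V)\otimes Z_{i,i})$ simply drops the central $C_0(\widetilde V)$-action, and the composition $\vartheta\circ\Theta$ (cf. Remark~\ref{vartheta}) into $KK_{\R/\Z}^{1,\Gamma}(C_0(\widetilde V),C_0(\widetilde V))$ exactly recovers $(E\otimes V_\alpha,F\otimes 1)\diamond_{\check u}(E\otimes V_n,F\otimes 1)$, which is the cycle defining $\rho_\alpha^{C_0(\widetilde V)}$ in Definition~\ref{defstrongrho}.

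The main obstacle is the bookkeeping needed to match the APS-style description of $[\alpha]_{\textrm{APS}}$ in \cite{AAS} (which is phrased in terms of two bundles on $V$ becoming isomorphic after tensoring with a Mishchenko-type $N$-bundle) with the $\diamond$-construction on $Z_i$, and to verify that the unitary $\psi$ and the unitary $u$ play the roles prescribed on both sides. Once the representatives are matched, Theorem~\ref{Strongrhoconstruction} guarantees independence of the auxiliary choices, and the two classes are equal in $KK_{\R/\Z}^{1,\Gamma}(C_0(\widetilde V),C_0(\widetilde V))$.
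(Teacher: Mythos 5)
Your proposal is correct and follows essentially the same route as the paper's proof: both arguments take the explicit $C(V)\otimes Z_{i,i}$-module representative of $[\alpha]_{\textrm{APS}}$ from \cite{AAS}, lift the trivialization $\phi$ and the intertwiner $v$ to the $\Gamma$-equivariant data $\tilde\phi$ (equivalently $w$, giving the bimodule joining $C_0(\widetilde V)$ to $C_0(\widetilde V)\otimes P_\lambda$) and $u$ on $\widetilde V$, identify the resulting cycle with the $\diamond$-construction defining $\rho_\alpha^{C_0(\widetilde V)}$, and use the coincidence of the left and right $C_0(\widetilde V)$-actions together with the passage to $\Gamma$-invariants ($\widetilde{\mathcal E}^\Gamma=\mathcal E$) to traverse the forgetful map and the chain of isomorphisms. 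The bookkeeping you defer is exactly the content of the paper's displayed formulas for $\widehat{\mathcal E}$ and $\widetilde{\mathcal E}$, so nothing essential is missing.
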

\begin{proof}
In \cite[Prop. 5.1 and Prop. 5.2]{AAS}, we represented the class $[\alpha]_\textrm{APS}$ as the image under the morphism $K_0(C(V)\otimes Z_{i,i})\to K_{1,\R/\Z}(C(V))$ of the $C(V)\otimes Z_{i,i}$ module
 $\mathcal{E}=\{f\in C([0,1], E^+\otimes N); f(0)\in E^+\otimes 1 \;,\;w_v f(1) \in E^-\otimes 1\}$ where $N$ is a $\rm{II}_1$-factor with tracial inclusion $\lambda:\Gamma\to N$,  $i:\C\rightarrow N$ is the unital inclusion, $E^+$ is the flat vector bundle associated with $\alpha$ and $E^-$ is the trivial vector bundle with rank equal to the dimension of $\alpha$. The unitary $w_v$ is given by $w_v:=(\phi\otimes 1_{E^-})^{-1}\circ v^{-1}\circ (\phi\otimes 1_{E^+})$ where $\phi:C(V)\otimes N\to \mathcal {E}$ is an isomorphism, \emph{i.e.} a trivialization of the flat bundle $\mathcal{E}$ associated with $\lambda$ and $v$ is an isomorphism of flat bundles $\mathcal{E}\otimes E^- \to \mathcal{E}\otimes E^+$.

 At the level of $\widetilde V$, the trivialization $\phi$ corresponds to a $\Gamma$-equivariant isomorphism $\tilde \phi:\widetilde V\otimes N\to \widetilde V\otimes P_\lambda$ and $v$ to an unitary $u\in N\otimes M_n(\C)$ intertwining $\lambda\otimes 1$ and $\lambda \otimes \alpha$ ($P_\lambda$ is as in section \ref{constrrhoalphaA}).
 
 Let then $\widehat {\mathcal{E}}$ denote the $C_0(\widetilde V)\otimes Z_i$ module $\{f\in C_0(\widetilde V\times [0,1])\otimes P_\lambda  f(0)\in \widetilde\phi(C_0(\widetilde V)\otimes 1)\}$. The pair $(\widehat {\mathcal{E}},0)$ defines a Kasparov bimodule in ${\bf E}^\Gamma(C_0(\widetilde V),C_0(\widetilde V)\otimes Z_{i})$ joining  $(C_0(\widetilde V),0)$ and $(C_0(\widetilde V)\otimes P_\lambda,0)$.
 
 We may then perform the ``$\diamond $'' construction as in Theorem \ref{Strongrhoconstruction} using the isomorphism $u$ to obtain an element in $z\in KK^\Gamma(C_0(\widetilde V),C_0(\widetilde V)\otimes Z_{i,i})$ whose image in $KK_{\R/\Z}^{1,\Gamma}(C_0(\widetilde V),C_0(\widetilde V))$ is $\rho_\alpha^{C_0(\widetilde V)}$. The element $z$ is of course given by the Hilbert $C_0(\widetilde V)\otimes Z_{i,i}$-module $$ \widetilde{\mathcal{E}}:=\{f:[0,1]\times \widetilde{V}\rightarrow \C^n\otimes N: f(0,x)\in {\C}^n\times 1_N, \, \widetilde{w_v}(x)f(1,x)\in  {\C}^n\times 1_N, \textrm{ for every } x\in \widetilde{V}\,,$$
 where $ \widetilde{w_v}=(\tilde\phi\otimes 1)^{-1}\circ u^{-1}\circ (\tilde\phi\otimes 1)$. As the left and right $C_0(\widetilde V)$ actions on $ \widetilde{\mathcal{E}}$ coincide, the element $z$ is in the image of the forgetful map, $\mathcal{R}KK^\Gamma(\widetilde V;C_0(\widetilde V),C_0(\widetilde V)\otimes Z_{i,i})\rightarrow KK^\Gamma(C_0(\widetilde V),C_0(\widetilde V)\otimes Z_{i,i})$. Now $\widetilde{\mathcal E}^\Gamma=\mathcal E$, and the result follows.
 \end{proof}

It follows then from Proposition \ref{mischAPS} and Lemma \ref{lemMishchAPS} that we may deduce the class $[\alpha]_{APS}$ from our class $\rho_{\alpha}^{C_0({\widetilde V})}$.

 \subsection{If $A$ is proper and free}
 
Out of the example of the Mishchenko bundle of a compact manifold, we then obtain the following statements. Let $Z$ be a locally finite $CW$ complex realization for the classifying space $B\Gamma$ and $q:\widetilde Z\to Z$ the corresponding covering with group $\Gamma$. Write $Z=\bigcup Z_n$, where $Z_n$ is an increasing sequence of finite subcomplexes. Denote by $N$ a ${\rm II}_1$-factor containig $\Gamma$ in a trace preserving way, and $E_N=\widetilde Z\times _\Gamma N$ the corresponding bundle over $Z$ with fiber $N$.

\begin{lem}\label{prop4.3}
\begin{enumerate}
\item The restriction to $Z_n$ of the bundle $E_N$ is equal in $K^0_\R(Z_n)$ to the unit element (corresponding to the trivial bundle).
\item If $\widetilde X$ is a free, proper and cocompact $\Gamma$-space, then there is a ${\rm II}_1$-factor $N$ with an embedding $\lambda:C^*\Gamma\longrightarrow N$ such that ${\rm tr}_N\circ \lambda={\bf tr}$ and a unitary $w\in \cL(C_0(\widetilde X)\otimes N)$ intertwining the $\Gamma$-actions $\beta\otimes \id_N$ and $\beta\otimes \lambda$ on $C_0(\widetilde X)\otimes N$ where $\beta$ is the action of $\Gamma$ on $C_0(\widetilde X)$ by translation.\label{prop4.3.2}
\end{enumerate}
\begin{proof}
\begin{enumerate}
\item Since $Z_n$ is a finite CW-complex, its $K$-theory is a finitely generated group. Therefore, using the Rosenberg-Schochet universal coefficient formula (\cite{RoSc, Ros}), we find 
$KK_\R(C_0(Z_n), \C)={\rm Hom}(KK(\C,C_0(Z_n));\R)$.
 Using the Baum-Douglas theory (\textit{cf.} \cite{BaDo}), we know that the $K$-homology of $Z_n$ is generated by cycles given by elements $g_*(x)$ where $g$ is a continuous map  $g:V\to Z_n$, $V$ is a compact manifold and $x\in K_*(V)$ is an element in the $K$-homology of $V$. But by Atiyah's theorem for covering spaces (\cite{Ati}), the pairing of $E_N$ with such a $K$-homology element coincides with the index. It follows that the class of $E_N$ is $1$. 

\item We have a continuous classifying map $f:X\to B\Gamma$ where $X=\widetilde X$. Since $X$ is compact its image sits in some $Z_n$ and it follows that the bundle $f^*(E_N)$ defines the trivial element in $K^0_\R(X)$. Up to replacing $N$ by $M_k(N)$, we may then assume that the bundle $f^*E_N$ is trivial (as explained in \cite[Prop. 5.1]{AAS}).  By definition of $E_N$, this means exactly that there exists a unitary $w\in \cL(C_0(\widetilde X)\otimes N)$ intertwining the $\Gamma$-actions $\beta\otimes \id_N$ and $\beta\otimes \lambda$ on $C_0(\widetilde X)\otimes N$.\qedhere
\end{enumerate}
\end{proof}
\end{lem}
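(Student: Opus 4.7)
The plan is to prove (1) by combining the Rosenberg--Schochet UCT with Atiyah's $L^2$-index theorem, and then to deduce (2) by pulling back through a classifying map and invoking a standard stabilization argument.

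For part (1), the first step is to note that $Z_n$, being a finite CW-complex, has finitely generated $K$-theory; consequently the UCT provides, after tensoring with $\R$, a perfect pairing $K^0_\R(Z_n)\times K_0(Z_n)\to \R$ obtained by Kasparov product and $\mathrm{tr}_N$. It therefore suffices to show that $[E_N|_{Z_n}]$ and the class $1$ of the trivial line bundle evaluate identically on every $K$-homology class of $Z_n$. Using the Baum--Douglas geometric model of $K$-homology, every element of $K_0(Z_n)$ is a sum of cycles $(V,D,f)$ with $V$ a closed $\mathrm{spin}^c$ manifold, $D$ an elliptic operator on $V$, and $f\colon V\to Z_n$ continuous. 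The pairing $\langle [E_N|_{Z_n}],[V,D,f]\rangle$ is by construction the trace of the $\Gamma$-index of the lift of $D$ twisted by the pullback Mishchenko bundle $f^*E_N=\widetilde V\times_\Gamma N$. By Atiyah's $L^2$-index theorem, this $\Gamma$-trace equals the ordinary Fredholm index of $D$, namely $\langle 1,[V,D,f]\rangle$. Hence $[E_N|_{Z_n}]-1$ pairs to zero against every $K$-homology class and therefore vanishes in $K^0_\R(Z_n)$.

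For part (2), the quotient $X=\widetilde X/\Gamma$ is compact and admits a classifying map $f\colon X\to Z=B\Gamma$. Compactness of $X$ forces $f(X)\subset Z_n$ for some $n$, and the pullback $f^*E_N$ is canonically identified with the Mishchenko bundle $\widetilde X\times_\Gamma N$. Part (1) then yields $[f^*E_N]=1$ in $K^0_\R(X)$. Replacing $N$ by a matrix amplification $M_k(N)$, which remains a $\mathrm{II}_1$-factor and through which $C^*\Gamma$ still embeds trace-preservingly via $\lambda\otimes e_{11}$, one makes the bundle of Hilbert $M_k(N)$-modules genuinely trivial by the standard stabilization argument already used in \cite[Prop. 5.1]{AAS}. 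Such a trivialization is nothing but a $\Gamma$-equivariant unitary $w$ from $C_0(\widetilde X)\otimes M_k(N)$ endowed with the action $\beta\otimes\mathrm{id}$ to the same module endowed with $\beta\otimes\lambda$; renaming $M_k(N)$ as $N$ completes the proof.

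The hard part will be justifying the step from the $K^0_\R$-triviality in part (1) to an honest trivialization of the Hilbert-module bundle in part (2); this is where the passage to $M_k(N)$ is indispensable. One then has to verify that this amplification preserves both the trace condition on $\lambda$ and the property of being a $\mathrm{II}_1$-factor, which is elementary. A secondary technical point is the identification of the pairing in part (1) with Atiyah's von Neumann $\Gamma$-index, and the use of the UCT to pass from $K^0(Z_n)$, where a torsion obstruction could conceivably survive, to $K^0_\R(Z_n)$ where it does not.
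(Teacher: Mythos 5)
Your proof follows the paper's argument essentially verbatim: the Rosenberg--Schochet UCT plus the Baum--Douglas description of $K$-homology plus Atiyah's $L^2$-index theorem for part (1), then the classifying map, compactness of $X$, and the stabilization argument of \cite[Prop.~5.1]{AAS} for part (2). The only slip is the claim that $C^*\Gamma$ embeds trace-preservingly into $M_k(N)$ via $\lambda\otimes e_{11}$: that map is non-unital and scales the normalized trace by $1/k$, and the correct (and clearly intended) embedding is the unital diagonal one $g\mapsto \lambda(g)\otimes 1_k$.
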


We now recall the definition of free and proper $\Gamma$-algebras. Recall (\cite{Kas2}) that, if $X$ is a locally compact space, a $C_0(X)$ algebra is a $C^*$-algebra $A$ endowed with a morphism from $C_0(X)$ into the center of the multiplier algebra of $A$ and such that $A=C_0(X)A$. If $X$ is a $\Gamma$ space, a $C_0(X)-\Gamma$-algebra $A$ is an algebra endowed with compatible structures of $C_0(X)$-algebra and  $\Gamma$-algebra (\textit{i.e.} the morphism $C_0(X)\to \mathcal{ZM}(A)$ is equivariant).

\medskip Let us recall facts about $C_0(X)$-algebras (see \cite{Kas2,LeGall}):

\begin{properties}
\begin{enumerate}
\item \label{prop.4.4.1}If $A$ is a $C_0(X)$-algebra, we may define for every open subset $U\subset X$ the $C_0(U)$-algebra $A_U=C_0(U)A$. For every closed subset $F\subset X$, we put $A_F=A/A_{F^c}$, which is a $C_0(F)$ algebra. In particular, we have a fiber $A_x$ for every point $x\in X$. Moreover, there is a natural evaluation map $a\mapsto a_x$ from $A$ to $A_x$ and for $a\in A$, we have $\|a\|=\sup \{\|a_x\|,\ x\in X\}$.
\item If $A$ is a $C_0(X)$-algebra and $f:Y\to X$ is a continuous map, we define a pull back $f^*(A)$ which is a $C_0(Y)$-algebra. This pull back satisfies $f^*(A)_y=A_{f(y)}$ for all $y\in Y$.\\ We will use the following facts:\begin{enumerate}
\item  If $T$ is a locally compact space and $p:X\times T\to X$ is the projection then $p^*(A)=A\otimes C_0(T)$. 
\item If $h:Y\to Y$ is a homeomorphism such that $f\circ h=f$, there is an automorphism $\theta^h:f^*(A)\to f^*(A)$ such that $(\theta^h(b))_y=b_{h(y)}$ for every $y\in Y$. Note that by property (\ref{prop.4.4.1}) this equality characterizes $\theta^h(b)$.
\end{enumerate}
\end{enumerate}
\end{properties}

\begin{dfn}
A $\Gamma$-algebra $A$ is said to be \emph{free} (\textit{resp.} \emph{proper}) if there exists a free (\textit{resp.} proper) $\Gamma$-space $\widetilde X$ such that $A$ is a $C_0(\widetilde X) - \Gamma $-algebra.
\end{dfn}

Note that if $A$ is free and proper, then there exist a free $\Gamma$-space $\widetilde X_1$ and a proper $\Gamma$-space $\widetilde X_2$ such that $A$ is a $C_0(\widetilde X_1) - \Gamma $-algebra and a $C_0(\widetilde X_2)$--$\Gamma $-algebra. Then $A$ is a $C_0(\widetilde X_1\times \widetilde X_2) - \Gamma $-algebra, and $\widetilde X_1\times \widetilde X_2$ is free and proper.

\medskip
We will next prove that every free and proper $\Gamma$-algebra is $K$-theoretically free and proper (Theorem \ref{SKFPth}).

Let us start with the cocompact case:

\begin{prop}\label{cocompactAlg}
If $\widetilde X$ is a free, proper and cocompact $\Gamma$-space, then every $C_0(\widetilde X)-\Gamma$-algebra satisfies \SKFP.
\begin{proof}
Let $A$ be a $C_0(\widetilde X) - \Gamma $-algebra. Extending the action by left multiplication of $C_0(\widetilde X)\otimes N$ on $A\otimes N$ to the multiplier algebra, we find a unitary $\hat w\in \cL(A\otimes N)$ intertwining the actions $\hat \beta\otimes\id_N$ and $\hat \beta\otimes \lambda$ of $\Gamma$. It follows that $A$ satisfies \SKFP.
\end{proof}\end{prop}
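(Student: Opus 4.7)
The plan is to apply Lemma \ref{prop4.3}(\ref{prop4.3.2}) and then propagate the unitary it produces from $C_0(\widetilde X)\otimes N$ to the $C_0(\widetilde X)$-algebra $A\otimes N$ via the multiplier extension. First, I would invoke Lemma \ref{prop4.3}(\ref{prop4.3.2}) to obtain a ${\rm II}_1$-factor $N$ with a trace preserving embedding $\lambda:C^*\Gamma\to N$ (so that ${\rm tr}_N\circ\lambda={\bf tr}$) together with a unitary $w\in\cL(C_0(\widetilde X)\otimes N)$ intertwining the $\Gamma$-actions $\beta\otimes\id_N$ and $\beta\otimes\lambda$ on $C_0(\widetilde X)\otimes N$.

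Next, I would exploit the $C_0(\widetilde X)$-algebra structure on $A$. Since $A=C_0(\widetilde X)\cdot A$, the equivariant non-degenerate morphism $C_0(\widetilde X)\to\mathcal{ZM}(A)$ tensored with $\id_N$ yields a non-degenerate equivariant morphism $C_0(\widetilde X)\otimes N\to\cM(A\otimes N)$. By the universal property of multiplier algebras this extends uniquely to a strictly continuous unital $*$-homomorphism $\cM(C_0(\widetilde X)\otimes N)\to\cM(A\otimes N)=\cL(A\otimes N)$, under which the unitary $w$ is sent to a unitary $\hat w\in\cL(A\otimes N)$. The $\Gamma$-equivariance of this extension is automatic, because the $C_0(\widetilde X)$-structure on $A$ is equivariant by assumption; consequently $\hat w$ intertwines the $\Gamma$-actions $\hat\beta\otimes\id_N$ and $\hat\beta\otimes\lambda$ on $A\otimes N$, where $\hat\beta$ denotes the given $\Gamma$-action on $A$.

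Finally, I would unwind the definition of \SKFP. The Hilbert $(A,A\otimes N)$-bimodule $A\otimes N$ (with left action $a\mapsto a\otimes 1$) equipped with the action $\hat\beta\otimes\id_N$ represents $1_A^\Gamma\otimes[i]$, while the same bimodule with the action $\hat\beta\otimes\lambda$ represents $1_A^\Gamma\otimes[P_\lambda]$; the unitary $\hat w$ is a $\Gamma$-equivariant unitary equivalence between them. Their classes therefore coincide in $KK^\Gamma(A,A\otimes N)$, and after the passage to the inductive limit over ${\rm II}_1$-factors we obtain the desired equality $1_A^\Gamma\otimes[{\bf tr}]^\Gamma=1_{A,\R}^\Gamma$ in $KK^\Gamma_\R(A,A)$.

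I do not expect any serious obstacle: the argument is essentially a transcription of the proof of Proposition \ref{Miscase}, with $C_0(\widetilde V)$ replaced by an arbitrary $C_0(\widetilde X)$-$\Gamma$-algebra. The only point worth checking carefully is the extension of $w$ to $\cL(A\otimes N)$, which relies solely on the non-degeneracy of the $C_0(\widetilde X)$-structure on $A$.
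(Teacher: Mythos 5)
Your argument is correct and is essentially identical to the paper's proof: the paper likewise takes the unitary $w\in\cL(C_0(\widetilde X)\otimes N)$ from the preceding lemma and extends the left multiplication action of $C_0(\widetilde X)\otimes N$ on $A\otimes N$ to the multiplier algebra to obtain $\hat w\in\cL(A\otimes N)$ intertwining $\hat\beta\otimes\id_N$ and $\hat\beta\otimes\lambda$, then concludes by the definition of property (KFP). Your write-up merely spells out the non-degeneracy and equivariance points that the paper leaves implicit, which is fine.
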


\medskip 
As above, let $Z$ be a locally finite $CW$ complex realization for the classifying space $B\Gamma$. Write $Z=\bigcup Z_n$, where $Z_n$ is an increasing sequence of finite subcomplexes. Using local finiteness, we will also assume that $Z_n$ is contained in the interior of $Z_{n+1}$. 

Let $\widetilde X$ be a a free and proper $\Gamma$-space and let $A$ be a (separable) $C_0(\widetilde X) - \Gamma $-algebra. Put $X=\widetilde X/\Gamma$ and let $q:\widetilde X\to X$ be the quotient. Let also $f:X\to Z$ be a classifying map. 

Define $\widetilde X_n=(f\circ q)^{-1}(\mathaccent 023{Z}_n)\subset \widetilde X$ and   $\Omega =\bigcup _{n\in \mathbb{N}} \widetilde X_n\times ]n,+\infty[ $.

Let $\cA=C_0(\Omega )(A\otimes C_0(\R))$ and $A_n=C_0(X_n)A$. Let $\ccA$ denote the $c_0$-sum $\bigoplus_{n\in \mathbb{N}} A_n$ and $j:\ccA\to\ccA$ the shift $(a_0,a_1,a_2,\ldots)\mapsto (0,a_0,a_1,a_2,\ldots)$.

\begin{lem}
\begin{enumerate}
\item The $\Gamma$-algebras $A\otimes C_0(\R)$ and $\cA$ are homotopy equivalent.
\item The algebra $\cA$ identifies with the torus algebra $\cT(j,\id_{\ccA})$ (see definition \ref{torus}).
\end{enumerate}
\begin{proof}
\begin{enumerate}
\item Of course $A\otimes C_0(\R)$ and $A\otimes C_0(\R_+^*)$ are homotopy equivalent. Moreover let $h:Z\to \R_+$ be a (proper) map such that $h(z)>n$ if $z\not\in Z_n$, and put $\hat h=h\circ f\circ q:X\to \R_+^*$. The map $(x,t)\mapsto (x,t+\hat h(x))$ is a homeomorphism  $\widetilde X\times \R\to \widetilde X\times \R$. It induces an equivariant automorphism of $A\otimes C_0(\R)$ mapping $A\otimes C_0(\R_+^*)$ into $\cA$ which is a homotopy inverse of the inclusion $\cA\to A\otimes C_0(\R_+^*)$.
\item An element in $A\otimes C_0(\R_+^*)$ is a map $\xi:\R_+^*\to A$. Such a $\xi $ is in $\cA$ if and only if, for all $n\in \N$ and $t\in ]n,n+1]$, we have $\xi(t)\in A_n$ for $n<t\le n+1$. Associated to $\xi $ is a map $\zeta:[0,1]\to \ccA$ defined by $\zeta(t)_n=\xi(n+t) \ (\in A_n$). It is immediately seen that $\zeta \in \cT(j,\id_{\ccA})$ and that $\xi\mapsto \zeta$ is an isomorphism. \qedhere
\end{enumerate}
\end{proof}
\end{lem}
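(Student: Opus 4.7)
The plan is to handle parts (1) and (2) separately. For (1), I would construct a $\Gamma$-equivariant homotopy equivalence between $A\otimes C_0(\R)$ and $\cA$ by exhibiting a ``proper shift'' in the $\R$-direction; for (2), I would directly rewrite an element of $\cA$ as a continuous path in the $c_0$-sum $\ccA$ and check that the resulting gluing condition is exactly that of $\cT(j,\id_{\ccA})$.

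For (1), first observe that $A\otimes C_0(\R)$ is homotopy equivalent to $A\otimes C_0(\R_+^*)$ (both are suspensions of $A$, via a linear retraction of $\R$ onto $\R_+^*$), and that $\cA$ includes into $A\otimes C_0(\R_+^*)$ because $\Omega\subset\widetilde X\times \R_+^*$. To push $A\otimes C_0(\R_+^*)$ into $\cA$, choose a proper map $h:Z\to \R_+$ with $h(z)>n$ whenever $z\notin Z_n$, and put $\hat h = h\circ f\circ q$. Since $\hat h$ is $\Gamma$-invariant and depends only on $x$, the map $\psi(x,t) = (x,\, t+\hat h(x))$ is a fiber-preserving $\Gamma$-equivariant self-homeomorphism of $\widetilde X\times \R$, and the fact that $A$ is a $C_0(\widetilde X)$-algebra lets it act as a $\Gamma$-equivariant automorphism $\Psi$ of $A\otimes C_0(\R)$. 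The image $\Psi(A\otimes C_0(\R_+^*))$ is supported in $\{(x,s):s>\hat h(x)\}$, which (after a harmless additive shift of $h$) lies in $\Omega$, producing a morphism $A\otimes C_0(\R_+^*)\to \cA$. The affine family $\psi_s(x,t) = (x,\, t+s\hat h(x))$, $s\in[0,1]$, provides a homotopy between $\id$ and $\psi$, from which I would extract that $\Psi$ and the inclusion $\cA\hookrightarrow A\otimes C_0(\R_+^*)$ are mutually inverse up to homotopy.

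For (2), I view $\xi\in\cA$ as a continuous map $\R\to A$ (using $A\otimes C_0(\R)\cong C_0(\R,A)$) with support in $\Omega$. Because $Z_n\subset \mathring Z_{n+1}$ forces the $\widetilde X_n$ to be nested, the slice $\Omega\cap (\widetilde X\times (n,n+1])$ equals $\widetilde X_n\times (n,n+1]$, so $\xi(t)\in C_0(\widetilde X_n)A = A_n$ for $t\in(n,n+1]$ and $\xi(t)=0$ for $t\leq 0$. Define $\zeta:[0,1]\to\ccA$ by $\zeta(t)_n = \xi(n+t)$: continuity is automatic, and the $c_0$-condition in $n$ follows from $\xi$ vanishing at $+\infty$. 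The boundary values satisfy $\zeta(0)_n = \xi(n) = \zeta(1)_{n-1}$ for $n\geq 1$ and $\zeta(0)_0 = \xi(0) = 0$, which is precisely the relation $\zeta(0) = j(\zeta(1))$; setting $a=\zeta(1)$ yields $(a,\zeta)\in \cT(j,\id_{\ccA})$. The inverse is equally direct: given $(a,f)$, set $\xi(n+t) = f(t)_n$ for $t\in[0,1]$; the identity $f(1)_n = a_n = j(a)_{n+1} = f(0)_{n+1}$ makes $\xi$ well-defined and continuous at integer times, and extension by zero to $t\leq 0$ gives an element of $\cA$.

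The delicate step is in (1): fine-tuning $\hat h$ so the shifted copy of $\widetilde X\times \R_+^*$ genuinely lands in $\Omega$ (needing a margin of at least one unit against the integer thresholds defining $\Omega$), and then verifying that the restricted automorphism really is a two-sided homotopy inverse of the inclusion rather than merely a section. Part (2), by contrast, is pure bookkeeping once the support condition for $\cA$ is spelled out.
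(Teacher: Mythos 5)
Your proposal is correct and follows essentially the same route as the paper: the shift homeomorphism $(x,t)\mapsto(x,t+\hat h(x))$ built from a proper map $h$ adapted to the filtration $(Z_n)$ for part (1), and the reindexing $\zeta(t)_n=\xi(n+t)$ for part (2). The extra care you take — adding a margin to $h$ so the shifted support genuinely lands in $\Omega$, and using that $\Omega$ is upward closed in the $t$-direction so the affine homotopy preserves $\cA$ — only fills in details the paper leaves implicit.
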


\begin{lem}
\begin{enumerate}
\item For every $n$, the algebra $A_n$ satisfies \SKFP.
\item The algebra $\ccA$ satisfies \SKFP.
\item The algebra $\cA$ satisfies \SKFP.
\item The algebra $A$ satisfies \SKFP.
\end{enumerate}
\begin{proof}
\begin{enumerate}
\item Through the equivariant map $\widetilde X_n\to \widetilde Z_n$, the algebra $A_n$ is a $\widetilde Z_n$ algebra. As $\widetilde Z_n$ is a free proper and cocompact $\Gamma$-space, the algebra $A_n$ satisfies \SKFP\ by prop.\ref{cocompactAlg}.
\item For every $\Gamma$-algebra $B$ the map $KK_\R^\Gamma(\ccA,B)\prod _n KK_\R^\Gamma(A_n,B)$ is an isomorphism - the inverse map associates to a sequence $(E_n,F_n)$ of bimodules (with $F_n$ bounded) defining elements in $KK_\R^\Gamma(A_n,B)$ the class of $(\bigoplus E_n,F)$ where $F$ is defined by $F((x_n)_n)=(F_n(x_n))_n$ (for $(x_n)\in \bigoplus _nE_n$) (see \cite{Ros, RoSc}). Now $1_\R^\Gamma-[{\bf tr}]^\Gamma$ acts trivially on each copy $KK_\R^\Gamma(A_n,B)$, whence $1_\R^\Gamma-[{\bf tr}]^\Gamma$ acts as the $0$ element in $KK_\R^\Gamma(\ccA,\ccA)$.
\item Write $\cA=\cT(j,\id_{\ccA})$ and let $p:\cA\to \ccA$ and $h:S\ccA\to \cA$ be the associated map maps ($p:(a,f)\mapsto a$ and $h:f\mapsto (0,f)$). We have $p_*(1_{\cA}^\Gamma\otimes _\C(1_\R^\Gamma-[{\bf tr}]^\Gamma))=p^*(1_{\ccA}^\Gamma\otimes _\C(1_\R^\Gamma-[{\bf tr}]^\Gamma))=0$ since  $\ccA$ satisfies \SKFP.

By the torus exact sequence (prop. \ref{torexS}), there exists $y\in KK_\R^\Gamma(\ccA,S\ccA)$ such that $1_{\cA}^\Gamma\otimes _\C(1_\R^\Gamma-[{\bf tr}]^\Gamma)=h_*y$. As $\ccA$ satisfies \SKFP, we have $y\otimes _{S\ccA}(1_{S\ccA}^\Gamma\otimes _\C(1_\R^\Gamma-[{\bf tr}]^\Gamma))=0$. As the Kasparov product over $\C$ is commutative, we find $$y\otimes _{S\ccA}(1_{S\ccA}^\Gamma\otimes _\C(1_\R^\Gamma-[{\bf tr}]^\Gamma))=(1_{\cA}^\Gamma\otimes _\C(1_\R^\Gamma-[{\bf tr}]^\Gamma))\otimes _{\cA}y$$ whence $(1_{\cA}^\Gamma\otimes _\C(1_\R^\Gamma-[{\bf tr}]^\Gamma))\otimes _{\cA}h_*y$, which yields $$1_{\cA}^\Gamma\otimes _\C\Big((1_\R^\Gamma-[{\bf tr}]^\Gamma)\otimes _\C(1_\R^\Gamma-[{\bf tr}]^\Gamma)\Big)=(1_{\cA}^\Gamma\otimes _\C(1_\R^\Gamma-[{\bf tr}]^\Gamma))\otimes _{\cA}(1_{\cA}^\Gamma\otimes _\C(1_\R^\Gamma-[{\bf tr}]^\Gamma))=0.$$ But $[{\bf tr}]^\Gamma$ is idempotent in $KK_\R^\Gamma(\C,\C)$, thus $(1_\R^\Gamma-[{\bf tr}]^\Gamma)^2=(1_\R^\Gamma-[{\bf tr}]^\Gamma)$.

\item The $\Gamma$-algebra $SA$ is homotopy equivalent to $\cA$. It follows that $SA$ satisfies \SKFP; thus $1_{SA}^\Gamma\otimes _\C(1_\R^\Gamma-[{\bf tr}]^\Gamma)=0$. The Bott isomorphism $x\mapsto Sx$ from $KK_\R^\Gamma(C,D)$ to $KK_\R^\Gamma (SC,SD)$ maps $1_{A}^\Gamma\otimes _\C(1_\R^\Gamma-[{\bf tr}]^\Gamma)$ to $1_{SA}^\Gamma\otimes _\C(1_\R^\Gamma-[{\bf tr}]^\Gamma)=0$. It follows that $A$ satisfies \SKFP. \qedhere
\end{enumerate}
\end{proof}
\end{lem}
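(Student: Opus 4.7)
\emph{Plan.} I would dispatch the four parts in order, using as main ingredients Proposition \ref{cocompactAlg} (the cocompact case), the torus exact sequence in Proposition \ref{torexS}, Bott periodicity, and the invariance of \SKFP\ under $KK^\Gamma$-equivalence.

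For (1), I would observe that the composition $f\circ q:\widetilde X\to Z$ restricts to an equivariant map $\widetilde X_n\to \widetilde Z_n$, which induces a unital morphism from $C_0(\widetilde Z_n)$ to the center of the multiplier algebra of $A_n=C_0(\widetilde X_n)A$. This equips $A_n$ with the structure of a $C_0(\widetilde Z_n)$-$\Gamma$-algebra. Since $Z_n$ is a finite subcomplex of $B\Gamma$, the covering $\widetilde Z_n$ is free, proper and cocompact, so Proposition \ref{cocompactAlg} immediately gives \SKFP\ for $A_n$.

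For (2), the key input is that for every $\Gamma$-algebra $B$, the $c_0$-direct sum structure $\ccA=\bigoplus_n A_n$ yields an isomorphism $KK_{\R}^\Gamma(\ccA,B)\cong\prod_n KK_{\R}^\Gamma(A_n,B)$; a class in the product is represented by the direct sum of bimodule representatives. External product with $(1_\R^\Gamma-[{\bf tr}]^\Gamma)$ acts componentwise, and by (1) every component vanishes; taking $B=\ccA$ forces $1_{\ccA}^\Gamma\otimes_\C(1_\R^\Gamma-[{\bf tr}]^\Gamma)=0$.

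For (3), which I expect to be the main obstacle, I identify $\cA$ with $\cT(j,\id_{\ccA})$ using the preceding sub-lemma and set $x=1_{\cA}^\Gamma\otimes_\C(1_\R^\Gamma-[{\bf tr}]^\Gamma)\in KK_\R^\Gamma(\cA,\cA)$. By Proposition \ref{torexS} applied to the evaluation $p:\cA\to\ccA$ and the inclusion $h:S\ccA\to\cA$, one has $p_*(x)=0$ by (2), hence $x=h_*(y)$ for some $y\in KK_\R^\Gamma(\cA,S\ccA)$. The clever step is to exploit the idempotency of $(1_\R^\Gamma-[{\bf tr}]^\Gamma)$ in the commutative ring $KK_\R^\Gamma(\C,\C)$: this gives $x\otimes_\cA x=x$. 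Writing the left hand side as $h_*(y)\otimes_\cA x$ and pulling the second factor through the commutative diagram of external products, I rewrite it as $h_*\bigl(y\otimes_{S\ccA}h^*(x)\bigr)$. But $h^*(x)=1_{S\ccA}^\Gamma\otimes_\C(1_\R^\Gamma-[{\bf tr}]^\Gamma)$, which vanishes because $S\ccA$ inherits \SKFP\ from $\ccA$ via $KK^\Gamma$-equivalence (part (2) plus Bott). Hence $x=0$, which is exactly \SKFP\ for $\cA$.

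For (4), the first sub-lemma gives an equivariant homotopy equivalence $SA=A\otimes C_0(\R)\simeq\cA$, so $SA$ satisfies \SKFP\ by (3). Bott periodicity provides a natural isomorphism $KK_\R^\Gamma(A,A)\cong KK_\R^\Gamma(SA,SA)$ that sends $1_A^\Gamma\otimes_\C(1_\R^\Gamma-[{\bf tr}]^\Gamma)$ to $1_{SA}^\Gamma\otimes_\C(1_\R^\Gamma-[{\bf tr}]^\Gamma)=0$, so $A$ satisfies \SKFP. The expected difficulty concentrates entirely in step (3): the torus exact sequence by itself only produces a lift, and one must combine it with the idempotency of the trace obstruction together with \SKFP\ for the suspension of $\ccA$ to upgrade ``lifts'' into ``vanishes''.
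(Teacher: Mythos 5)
Your proof is correct and follows the paper's argument step for step: the cocompact case for each $A_n$, the product decomposition of $KK_\R^\Gamma(\ccA,\cdot)$ over the summands, the torus exact sequence combined with the idempotency of $1_\R^\Gamma-[{\bf tr}]^\Gamma$ to kill $1_{\cA}^\Gamma\otimes_\C(1_\R^\Gamma-[{\bf tr}]^\Gamma)$, and desuspension via Bott for $A$. The only blemish is notational, in step (3): $h_*\bigl(y\otimes_{S\ccA}h^*(x)\bigr)$ should read simply $y\otimes_{S\ccA}h^*(x)$ (the outer $h_*$ does not typecheck on an element of $KK_\R^\Gamma(\cA,\cA)$), and $h^*(x)$ equals $(1_{S\ccA}^\Gamma\otimes_\C(1_\R^\Gamma-[{\bf tr}]^\Gamma))\otimes_{S\ccA}[h]$ rather than $1_{S\ccA}^\Gamma\otimes_\C(1_\R^\Gamma-[{\bf tr}]^\Gamma)$ itself, but the vanishing you invoke (best justified by \SKFP\ for $S\ccA=\ccA\otimes C_0(\R)$ via the tensor-product stability of the property, rather than by a $KK^\Gamma$-equivalence) makes the conclusion go through unchanged.
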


\medskip We have proved:

\begin{theorem}\label{SKFPth}
Every  free and proper $\Gamma $-algebra  satisfies \SKFP. \hfill$\square$
\end{theorem}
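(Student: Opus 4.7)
The strategy is to reduce the general case to the cocompact case already handled in Proposition \ref{cocompactAlg}, using a CW-filtration of the classifying space $B\Gamma$ and a telescope/torus construction.

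First, I would fix a locally finite CW realization $Z=\bigcup Z_n$ of $B\Gamma$ with $Z_n$ finite subcomplexes contained in the interior of $Z_{n+1}$. Given a free and proper $\Gamma$-space $\widetilde X$ such that $A$ is a $C_0(\widetilde X)-\Gamma$-algebra, choose a classifying map $f:X=\widetilde X/\Gamma \to Z$, and form $\widetilde X_n=(f\circ q)^{-1}(\mathring Z_n)$ together with the cocompact pieces $A_n = C_0(X_n)A$. Each $A_n$ is a $C_0(\widetilde X_n)-\Gamma$-algebra for a free, proper, cocompact $\Gamma$-space, so Proposition \ref{cocompactAlg} provides \SKFP\ for $A_n$. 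Then the $c_0$-sum $\ccA = \bigoplus_n A_n$ inherits \SKFP, because a $KK_\R^\Gamma$-bimodule out of $\ccA$ decomposes as a product of bimodules out of the $A_n$ (à la Rosenberg--Schochet), so the action of $1_\R^\Gamma-[\mathbf{tr}]^\Gamma$ on $KK_\R^\Gamma(\ccA,-)$ vanishes componentwise.

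Next, I would bridge from $\ccA$ to $A$ via a mapping telescope. Set $\Omega=\bigcup_n \widetilde X_n\times (n,+\infty)$ and $\cA = C_0(\Omega)(A\otimes C_0(\R))$. Using a proper function $h:Z\to\R_+$ with $h>n$ outside $Z_n$, a shift homeomorphism of $\widetilde X\times \R$ identifies $\cA$ as a $\Gamma$-equivariant deformation retract of $A\otimes C_0(\R)=SA$; and, describing elements of $\cA$ as continuous paths with piecewise target $A_n$, one identifies $\cA$ with the torus algebra $\cT(j,\id_{\ccA})$ associated to the shift $j:\ccA\to\ccA$, $(a_0,a_1,\ldots)\mapsto(0,a_0,a_1,\ldots)$.

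The main obstacle is deducing \SKFP\ for $\cA$ from \SKFP\ for $\ccA$, which I would resolve using the torus six term sequence of Proposition \ref{torexS} combined with the idempotency of $[\mathbf{tr}]^\Gamma$. Set $\epsilon = 1_\R^\Gamma-[\mathbf{tr}]^\Gamma\in KK_\R^\Gamma(\C,\C)$. Since $p^*(1_\ccA^\Gamma\otimes_\C\epsilon)=1_\cA^\Gamma\otimes_\C\epsilon$ where $p:\cA\to\ccA$ is the evaluation, and $1_\ccA^\Gamma\otimes_\C\epsilon=0$, the element $1_\cA^\Gamma\otimes_\C\epsilon\in KK_\R^\Gamma(\cA,\cA)$ lifts along $h_*$ from some $y\in KK_\R^\Gamma(\cA,S\ccA)$. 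Multiplying by $1_\ccA^\Gamma\otimes_\C\epsilon=0$ on the right and invoking commutativity of the exterior product over $\C$ gives
\[
(1_\cA^\Gamma\otimes_\C\epsilon)\otimes_\cA (1_\cA^\Gamma\otimes_\C\epsilon) \;=\; h_*\bigl(y\otimes_{S\ccA}(1_{S\ccA}^\Gamma\otimes_\C\epsilon)\bigr)\;=\;0.
\]
Since $[\mathbf{tr}]^\Gamma$ is idempotent, $\epsilon^2=\epsilon$, so $1_\cA^\Gamma\otimes_\C\epsilon=0$, i.e. $\cA$ satisfies \SKFP.

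Finally, the homotopy equivalence $\cA\simeq SA$ together with Bott periodicity (the isomorphism $x\mapsto Sx$ of $KK_\R^\Gamma(C,D)\cong KK_\R^\Gamma(SC,SD)$) transports \SKFP\ from $\cA$ back to $A$. This completes the plan; the delicate part was the square-zero/idempotency argument for $\cA$, which is exactly where the structure of $(KFP)$ as a property of an idempotent gets exploited.
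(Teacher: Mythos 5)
Your proposal is correct and follows essentially the same route as the paper: cocompact pieces $A_n$ via a CW filtration of $B\Gamma$, the $c_0$-sum $\ccA$, the telescope $\cA\simeq SA$ realized as the torus algebra $\cT(j,\id_{\ccA})$, the idempotency argument $\epsilon^2=\epsilon$ forcing $1_{\cA}^\Gamma\otimes_\C\epsilon=0$, and Bott periodicity to descend to $A$. The only (harmless) slip is writing $p^*(1_{\ccA}^\Gamma\otimes_\C\epsilon)=1_{\cA}^\Gamma\otimes_\C\epsilon$ where you mean $p_*(1_{\cA}^\Gamma\otimes_\C\epsilon)=p^*(1_{\ccA}^\Gamma\otimes_\C\epsilon)=0$, which is exactly what the torus exact sequence needs.
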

 
 As a corollary, every $\Gamma $-algebra which is $KK$-subequivalent to a proper and free algebra satisfies \SKFP.
 
 \subsection{If $\Gamma$  satisfies (the $KK^\Gamma$- form of) the Baum-Connes conjecture}
 
Let us say that $\Gamma$ satisfies the $KK^\Gamma$-form of the Baum-Connes conjecture if there is a proper $\Gamma$-algebra $\mathcal {Q}$ such that $\C$ is $KK^\Gamma$-subequivalent to $\cQ$. This of course implies that $\Gamma$ is $K$-amenable (\cite{Cuntz}). On the other hand, Higson and Kasparov proved that every A-$T$-menable group satisfies this property (\cite{HiKa1, HiKa2}).

If $\Gamma$ satisfies the $KK^\Gamma$-form of the Baum-Connes conjecture every $\Gamma$-algebra  is $KK^\Gamma$-subequivalent to a proper $\Gamma$-algebra (namely $A$  is $KK^\Gamma$-subequivalent to $\cQ\otimes A$).

\begin{itemize}
\item If $\Gamma$ is torsion free. Then $\cQ$ is automatically free. Then every $\Gamma$-algebra satisfies \SKFP.

\item In the general case. Assume $A$ is free (\footnote{Note that we only need to assume that $\cQ\otimes A$ is free, which is much weaker: it is just a statement on the torsion elements of $\Gamma$.}), then  $\cQ\otimes A$ is free and proper and therefore satisfies \SKFP. It follows that every free $\Gamma $-algebra satisfies \SKFP. 
\end{itemize}

\subsection{If $\Gamma$ has a $\gamma$ element}
 
 Recall (\emph{cf.} \cite{Kas2, Tu}) that a $\gamma$-element for $\Gamma$ is an element $\gamma\in KK^\Gamma(\C,\C)$ such that:
 \begin{enumerate}
\item There exists a proper $\Gamma $-algebra $\cQ$ and elements $D\in KK_\Gamma(\cQ,\C)$ and $\eta \in KK_\Gamma(\C,\cQ)$ such that $\gamma=\eta\otimes _\cQ D$. $D$ and $\eta$ are respectively the so called \emph{Dirac} and \emph{dual Dirac} element;
\item for every proper $\Gamma$-algebra $A$, $1_A^\Gamma\otimes _\C\gamma=1_A^\Gamma$.
\end{enumerate}

Recall (\emph{cf.} \cite{Tu}), that if $\gamma$ exists, it is unique.

\begin{itemize}
\item If $\Gamma$ is torsion free and has a $\gamma$ element, then  the algebra $\cQ$ is free and proper, therefore it satisfies \SKFP. It follows that, for every finite dimensional unitary representation $\alpha :\Gamma\to U_n$, we can construct a canonical element $\rho_\alpha ^{\cQ}\in KK_{\R/\Z}^{1,\Gamma}(\cQ ,\cQ)$ and use the element $\eta  $ and $D $ in order to define a canonical element $\rho_\alpha ^{\C}=\eta\otimes _{\cQ} \rho_\alpha ^{\cQ}\otimes_{\cQ}D \in KK_{\R/\Z}^{1,\Gamma}(\C,\C)$ in ``the image of $\gamma$''. Then for every $\Gamma $ algebra, we construct $\rho_\alpha^A=1_A^\Gamma\otimes_\C \rho_\alpha ^{\C}$.

\item In the same way as above, if $\Gamma$ is no longer assumed to be torsion free, we may construct the element $\rho_\alpha ^{\cQ\otimes A}$ if $A$ is free - and more generally if $\cQ\otimes A$ is free. We then use the Dirac and dual Dirac elements to construct a canonical element $\rho_\alpha ^{A}=\eta\otimes _{\cQ} \rho_\alpha ^{\cQ\otimes A}\otimes_{\cQ}D\in KK_{\R/\Z}^{1,\Gamma}(A,A)$ in ``the image of $\gamma$''.
\end{itemize}
 
 \subsection{If $\alpha(\Gamma)$ is torsion free}
 
 Put $\check\Gamma =\alpha(\Gamma)$ (with the discrete topology). Since $\check\Gamma$ is linear, it follows from \cite{Weinberger et al} that it has a $\gamma$ element. We may thus define an element $\rho_{\check\alpha} ^{\C}\in KK_{\R/\Z}^{1,\check\Gamma}(\C,\C)$ where $\check\alpha$ is the inclusion $\check\Gamma\subset U_n$.
 Then we can use the morphism $q:\Gamma\to \check \Gamma$ in order to define $\rho_{\alpha} ^{\C}=q^*\rho_{\check\alpha} ^{\C}$ (and then put, for every $\Gamma $-algebra $\rho_\alpha^A=1_A^\Gamma\otimes_\C \rho_\alpha ^{\C}$).

\section{Weak (KFP) property}
\label{rho}

\subsection{KK-theory elements with real coefficients associated with a trace on $C^*\Gamma$}

As explained in Lemma \ref{lemma1}, for every $\Gamma$-algebra $A$ and every $C^*$-algebra $B$, we may identify $KK(A\rtimes \Gamma,B)$ with $KK^\Gamma(A,B)$ where $B$ is endowed with the trivial action of $B$. Replacing in this formula $B$ by $B\otimes M$ where $M$ is a ${\rm II}_1$-factor and taking the inductive limit, we find for a $\Gamma$-algebra $A$ and a $C^*$-algebra $B$ endowed with a trivial $\Gamma$-action an identification $KK_{\R}^\Gamma(A,B)=KK_{\R}(A\rtimes \Gamma,B)$ - and in the same way, $KK_{\R/\Z}^\Gamma(A,B)=KK_{\R/\Z}(A\rtimes \Gamma,B)$.

\begin{dfn}
Given a $\Gamma$-algebra $A$ and a trace $\tau$ on $C^*\Gamma$ we have three equivalent ways to define the same element $[\morph{A}{\tau}]\in KK_{\R}(A\rtimes \Gamma,A\rtimes \Gamma)$:
\begin{enumerate}
\item We may put $[\morph{A}{\tau}]=\tau_*[\delta^A]$ where $\delta^A\in KK(A\rtimes \Gamma,A\rtimes \Gamma\otimes C^*\Gamma)$ is the class of the morphism $\delta^A$ and $\tau_*:KK(B,C\otimes C^*\Gamma)\longrightarrow KK_{\R}(B,C)$ is the map associated with the trace $\tau$ constructed in Definition \ref{tau*}.
\item Starting with $[\tau]\in KK_{\R}(C^*\Gamma,\C)$, we obtain (by tensoring with $A\rtimes \Gamma$) an element $(1_{A\rtimes \Gamma}\otimes [\tau])\in KK_{\R}(A\rtimes \Gamma\otimes C^*\Gamma,A\rtimes \Gamma)$ and, with this notation, we have $[\morph{A}{\tau}]=(\delta^A)^*(1_{A\rtimes \Gamma}\otimes [\tau])\in KK_{\R}(A\rtimes \Gamma,A\rtimes \Gamma)$.
\item Let as usual $[\tau]^\Gamma\in KK_\R^\Gamma(\C,\C)$ be the element corresponding to $[\tau]$ via the identification of
 $KK_{\R}(C^*\Gamma,\C)$ with $KK^\Gamma_{{\R}}(\C,\C)$; the element $1_A^\Gamma\otimes [\tau]^\Gamma\in KK^{\Gamma}_{\R}(A,A)$ satisfies $[\morph{A}{\tau}]=j_\Gamma(1_A^\Gamma\otimes [\tau]^\Gamma)$ where $j_\Gamma:KK^{\Gamma}_{\R}(A,A)\to KK_{\R}(A\rtimes \Gamma,A\rtimes \Gamma)$ is Kasparov's descent.
\end{enumerate}
\end{dfn}

\subsection{Weakly K-theoretically ``free and proper'' algebras}

We now introduce a weakening of (KFP) property

\begin{dfn}
Let $\Gamma$ be a discrete group. Denote by ${\bf tr}$ its group tracial state. We say that the $\Gamma$-algebra $A$ satisfies \KFP\ if $[\morph{A}{{\bf tr}}]$ is equal to the unit element $1_{A\rtimes \Gamma,\R}$ of the ring $KK_{\R}(A\rtimes \Gamma,A\rtimes \Gamma)$.
\end{dfn}

This conditions means that there is a $\rm{II}_1$-factor $N$ with a morphism $\lambda:C^*\Gamma\longrightarrow N$ such that ${\rm tr}_N\circ \lambda={\bf tr}$ (where ${\rm tr}_N$ is the normalized trace of $N$) and such that the maps $\morph{A}{\lambda}=(\id_{A\rtimes \Gamma}\otimes \lambda)\circ \delta^A: A\rtimes \Gamma \longrightarrow (A\rtimes \Gamma)\otimes N$ and $\iota^A= \operatorname{id}\otimes 1_N:A\rtimes \Gamma \longrightarrow (A\rtimes \Gamma)\otimes N$ define the same element in $KK(A\rtimes \Gamma,A\rtimes \Gamma\otimes N)$.

Let us make a few comments on this definition:

\begin{properties}\begin{enumerate}
\item If a $\Gamma$-algebra $A$ satisfies \SKFP, it satisfies \KFP. \\ 
Indeed, if $1_A^\Gamma\otimes [{\bf tr}]^\Gamma=1_{A,\R}^\Gamma$, then $[\morph{A}{{\bf tr}}]=j_\Gamma(1_A^\Gamma\otimes [{\bf tr}]^\Gamma)=j_\Gamma(1_{A,\R}^\Gamma)=1_{A\rtimes \Gamma,\R}$.
\item If $\C$ satisfies \KFP, it satisfies \SKFP. Indeed, $\varepsilon_* [\morph{A}{{\bf tr}}]=[{\bf tr}]\in KK_\R(C^*\Gamma,\C)=KK^\Gamma_\R(\C,\C)$.

\item Let $A,B$ be $\Gamma$-algebras. If $A$ is $KK^\Gamma$-subequivalent to $B$ and $B$ satisfies \KFP\  then so does $A$. \\
 The assumption means that there exist $x\in KK^\Gamma(A,B)$ and $y\in KK^\Gamma(B,A)$ satisfying $x\otimes_By=1_A^\Gamma$. Note that $y\otimes _\C[{\bf tr}]^\Gamma=[{\bf tr}]^\Gamma\otimes _\C y=(1_B^\Gamma\otimes [{\bf tr}]^\Gamma)\otimes _B y$.
Assume $B$ satisfies \KFP. Then $j_\Gamma(y\otimes_\C [{\bf tr}]^\Gamma)$  is the image in $KK_{\R} (B\rtimes \Gamma,A\rtimes \Gamma)$ of $j_\Gamma(y)$, whence $[\morph{A}{{\bf tr}}]=j_\Gamma(x)\otimes _{B\rtimes \Gamma}j_\Gamma(y\otimes_\C [{\bf tr}]^\Gamma)=1_{A\rtimes \Gamma,\R}$.
\end{enumerate}
\end{properties}

\begin{rems} \label{prodtraces}
\begin{enumerate}
\item (see Remark \ref{strongprodtraces}) If $A$ is satisfies \KFP, then  $[\morph{A}{\tau}]=1_{A\rtimes \Gamma,\R}$ for any tracial state $\tau$. 

\item (see Remark \ref{qGammainvertible}) Given a von Neumann algebra $N$ and a morphism $\lambda:C^*\Gamma\longrightarrow N$ such that ${\rm tr}_N\circ \lambda={\bf tr}$ (where ${\rm tr}_N$ is the normalized trace of $N$), then $\lambda$ factors through $C^*_{red}(\Gamma)$; it therefore defines a morphism  $\widehat{\morph{A}{\lambda}}:A\rtimes_{red} \Gamma\longrightarrow A\rtimes \Gamma\otimes N$. 
Let $q_\Gamma:A\rtimes \Gamma\to A\rtimes_{red}\Gamma$ be the natural morphism.
If $A$ satisfies \KFP, then $q_\Gamma^*[\widehat{\morph{A}{\lambda}}]= 1_{A\rtimes\Gamma,\R}$ \textit{i.e} the morphism $q_\Gamma:A\rtimes \Gamma\to A\rtimes_{red}\Gamma$ has a $KK_{\R}$ one sided inverse. 
\end{enumerate}
\end{rems}

\subsection{A construction involving coactions and torus algebras}
Throughout this section, the following objects will be fixed: $\Gamma$ is a discrete group, $A$ is a $\Gamma$-algebra, $\alpha$ is 
 a finite dimensional unitary representation of $\Gamma$. 
 Moreover, we fix a ${\rm II}_1$-factor $N$ and a morphism $\lambda:C^*\Gamma\longrightarrow N$ such that ${\rm tr}_N\circ \lambda={\bf tr}$, where as above ${\rm tr}_N$ denotes the normalized trace of $N$ and ${\bf tr}$ is the group trace of $\Gamma$.
 
 We fix two morphisms $\iota^A,\morph{A}{\lambda}:A\rtimes \Gamma\to A\rtimes \Gamma\otimes N$ where $\iota ^A:x\mapsto x\otimes 1$ and $\morph{A}{\lambda}$ uses the ``coaction'' of $\Gamma$ and is defined in Notation \ref{notation} below.

We now construct an element  $\sigma^A_\alpha \in KK^1_{\R/\Z}(\cT,A\rtimes \Gamma)$ where $\cT=\mathcal{T}({\iota^A,\morph{A}{\lambda}})$ is the corresponding torus algebra. This will be an important ingredient in our construction of the weaker $\rho$ invariant (Definition \ref{defweakrho}).

\subsubsection{A morphism between torus algebras}

There exists a $\rm{II}_1$-factor $M$ containing $N$ with a morphism $C(U_n)\rtimes \Gamma\longrightarrow M$ extending $\lambda :C^*\Gamma\longrightarrow N\subset M$. Indeed we can take as $M$ (a $\rm{II}_1$-factor containing) the  free product of $N$ with $L^{\infty}(U_n)\rtimes \Gamma$ with amalgamation over the group von Neumann algebra of $\Gamma$.

It follows, thanks to Lemma \ref{strongunitary}, that there exists a unitary $u\in M_n(\C)\otimes M$ satisfying $(\alpha(g)\otimes \lambda (g))u=u(1\otimes \lambda (g))\in M_n(\C)\otimes M$.

\bigskip

\begin{notation}\label{notation}
Given a unital $C^*$-algebra $B$ and a unitary representation $\pi:\Gamma\to B$ - in other words a morphism $\pi:C^*\Gamma \longrightarrow B$ we will put  $$\morph{A}{\pi}:=(\operatorname{id}_{A\rtimes \Gamma}\otimes \pi)\circ \delta^A:A\rtimes \Gamma\longrightarrow (A\rtimes \Gamma)\otimes B.$$ 
\end{notation}

We denote  by $\alpha:C^*\Gamma\to M_n(\C)$ the morphism corresponding to $\alpha:\Gamma\to U_n$. We obtain a morphism $\morph{A}{\alpha} =(\operatorname{id}_{A\rtimes \Gamma}\otimes \alpha)\circ \delta^A:A\rtimes \Gamma \longrightarrow (A\rtimes \Gamma) \otimes M_n$. Denote by $[\morph{A}{\alpha}]\in KK(A\rtimes \Gamma,A\rtimes \Gamma)$ the associated $KK$-class.

\begin{prop}\label{prop2.3} 

Let $M$ be a $\rm{II}_1$-factor containing $N$ and let $u\in M_n(\C)\otimes M$  be a unitary satisfying $(\alpha(g)\otimes \lambda (g))u=u(1\otimes \lambda (g))\in M_n(\C)\otimes M$.

Denote by $j^A_n:A\rtimes \Gamma \to A\rtimes \Gamma\otimes M_n(\C)$ the morphism given by $x\mapsto x\otimes 1_{M_n(\C)}$ and $\iota^A:A\rtimes \Gamma \longrightarrow A\rtimes \Gamma\otimes  N$ the one given by $x\mapsto x\otimes 1_N$. Let $i_n:M_n(\C)\rightarrow M_n(\C)\otimes M$ the map defined by $i_n(x)=x \otimes 1_M$ and $\widehat{\,\,i_n}={\rm Ad}_{u}\circ i_n$.

\begin{enumerate}
\item There is a morphism 
$\Delta_\alpha^u:\mathcal{T}(\iota^A,\delta^A_\lambda)\longrightarrow  A\rtimes \Gamma \otimes Z(i_n,\widehat{\,\,i_n})=Z(\operatorname{id}_{A\rtimes \Gamma} \otimes i_n, {\operatorname{id}}_{A\rtimes \Gamma} \otimes \widehat{\,\,i_n})$ such that $\Delta_\alpha^u(a,f)=(\delta^A_{\alpha}(a),j^A_n(a),g)$ with
$$
g(t)=\begin{cases}
(\morph{A}{\alpha}\otimes i_{M,N})(f(2t))&$for $t\le 1/2\\
(1_{A\rtimes \Gamma}\otimes u)(j^A_n\otimes i_{M,N})f(2-2t)(1_{A\rtimes \Gamma}\otimes u^*)& $for $t\ge 1/2.
\end{cases}
$$
\item Let $p:\mathcal{T}({\iota^A,\morph{A}{\lambda}})\longrightarrow A\rtimes \Gamma$ the projection $p:(a,f)\mapsto a$. With respect to the natural evaluation maps $p_0,p_1: A\rtimes \Gamma\otimes Z(i_n,\widehat{\,\,i_n})\otimes M_n$, we have
\begin{eqnarray*}
p_0\circ \Delta^u_\alpha =\delta^A_{\alpha}\circ p , \quad\quad  p_0: (a_0,a_1,f)\mapsto a_0\\
p_1\circ \Delta^u_\alpha =j^A_n\circ p, \quad\quad p_1: (a_0,a_1,f)\mapsto a_1
\end{eqnarray*}
 
\item Furthermore $[\Delta^u_\alpha\circ h]=\hat  h_*\Big( (  [\morph{A}{\alpha}] -[j^A_n])\otimes  [S\,i_{M,N}]\Big)\in KK(A\rtimes \Gamma\otimes SN,Z(i_n,\widehat{\,\,i_n}))$ where $S\,i_{M,N}$ is the inclusion $SN\hookrightarrow SM$ and where $h:A\rtimes \Gamma\otimes SN\to \mathcal{T}({\iota^A,\morph{A}{\lambda}})$ and $\hat h:A\rtimes \Gamma\otimes M_n(\C)\otimes SM\to A\rtimes \Gamma \otimes Z(i_n,\widehat{\,\,i_n})$ denote the inclusions $f\mapsto (0,f)$ and $f\mapsto (0,0,f)$.
\end{enumerate}
\end{prop}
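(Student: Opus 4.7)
For part (1), my plan is to verify directly that the stated formula yields a well-defined $*$-homomorphism. The map is manifestly linear and multiplicative in both the $A\rtimes \Gamma$-component and the path $f$. The crucial checks are the boundary conditions of the target double cylinder and continuity at $t=1/2$. Using $f(0)=a\otimes 1_N$ one finds $g(0)=\morph{A}{\alpha}(a)\otimes 1_M=(\operatorname{id}\otimes i_n)(\morph{A}{\alpha}(a))$ and $g(1)=(1\otimes u)(j^A_n(a)\otimes 1_M)(1\otimes u^*)=(\operatorname{id}\otimes \widehat{\,\,i_n})(j^A_n(a))$; continuity at $1/2$ is the real content, and on an element $a=bu_g$ both one-sided limits reduce to $bu_g\otimes u(1\otimes \lambda(g))u^*$, which equals $bu_g\otimes \alpha(g)\otimes\lambda(g)$ by the intertwining relation $(\alpha(g)\otimes \lambda(g))u=u(1\otimes \lambda(g))$.

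Part (2) is immediate from the definition, since the first two components of $\Delta^u_\alpha(a,f)$ are $\morph{A}{\alpha}(a)$ and $j^A_n(a)$ while $p(a,f)=a$.

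For part (3), I would start from the observation that $h(f)=(0,f)$, so $\Delta^u_\alpha(h(f))=(0,0,g)$ lies in the image of $\hat h$. Therefore $\Delta^u_\alpha\circ h=\hat h\circ \Psi$ for a morphism $\Psi:A\rtimes \Gamma\otimes SN\to A\rtimes \Gamma\otimes M_n(\C)\otimes SM$, and it suffices to prove the identity $[\Psi]=([\morph{A}{\alpha}]-[j^A_n])\otimes [S\,i_{M,N}]$. Since $f$ vanishes at the endpoints, $\Psi(f)$ vanishes at $t=0,1/2,1$; hence $\Psi=\Psi_0+\Psi_1$ splits as a sum of two $*$-homomorphisms with orthogonal images, supported respectively on $[0,1/2]$ and $[1/2,1]$. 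By orthogonality the classes add: $[\Psi]=[\Psi_0]+[\Psi_1]$.

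Finally, I would identify each summand as an external Kasparov product. Writing $SN\cong N\otimes S\C$ and $SM\cong M\otimes S\C$, one has $\Psi_0=(\morph{A}{\alpha}\otimes i_{M,N})\otimes \kappa_L$, where $\kappa_L:S\C\to S\C$ sends $\phi$ to the function equal to $\phi(2\,\cdot\,)$ on $[0,1/2]$ and to $0$ on $[1/2,1]$; this is homotopic to $\operatorname{id}_{S\C}$ via a stretching homotopy, so $[\kappa_L]=1_{S\C}$. Similarly $\Psi_1=\bigl(\operatorname{Ad}_{1\otimes u}\circ(j^A_n\otimes i_{M,N})\bigr)\otimes \kappa_R$, where $\kappa_R$ is homotopic to the orientation-reversing reflection $\sigma:\phi\mapsto \phi(1-\,\cdot\,)$ on $S\C$, whose $KK$-class is $-1_{S\C}$. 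Since inner conjugation by $1\otimes u$ preserves $KK$-classes, this yields $[\Psi_0]=[\morph{A}{\alpha}]\otimes [S\,i_{M,N}]$ and $[\Psi_1]=-[j^A_n]\otimes [S\,i_{M,N}]$, and the required identity follows. The main obstacle would be the sign tracking in this last step, notably the identification $[\kappa_R]=-1_{S\C}$, which rests on the standard orientation convention underlying Bott periodicity.
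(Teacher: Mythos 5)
Your proof is correct and follows essentially the same route as the paper: parts (1) and (2) are direct verifications (the paper states them without the details you supply), and for (3) the paper likewise discards the conjugation by $1\otimes u$ as $KK$-trivial, splits the induced morphism on $A\rtimes\Gamma\otimes SN$ into two summands with orthogonal ranges, and identifies the orientation-preserving piece with $[\morph{A}{\alpha}]\otimes[S\,i_{M,N}]$ and the orientation-reversing piece with $-[j^A_n]\otimes[S\,i_{M,N}]$. Your sign bookkeeping is the accurate version of this step (the paper's prose momentarily interchanges the labels of its two summands $g_1$ and $g_2$, though the resulting total class is the same).
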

\begin{proof}
$(1)$ The morphism is well defined. 
$(2)$ is immediate.
$(3)$ Obviously $\Delta^u_\alpha(A\rtimes \Gamma\otimes SN)\subset A\rtimes \Gamma\otimes M_n(\C)\otimes SM$. We just have to compute the $KK$-class of the morphism $\check \Delta^u_\alpha: A\rtimes \Gamma\otimes SN\to A\rtimes \Gamma\otimes M_n(\C)\otimes SM$ induced by $\Delta^u_\alpha$. 
Since the action of $\operatorname{Ad}_{u^*}$ is trivial in $K$-theory, we are computing the class of the sum $g_1+g_2$ where:
$$
g_1(t)=\begin{cases}
(\morph{A}{\alpha}\otimes i_{M,N})f(2t)& $for $t\le 1/2\\
0&$for $t\ge 1/2.
\end{cases}$$ and 
$$g_2(t)=\begin{cases}
0& $for $t\le 1/2\\
(j^A_n\otimes i_{M,N})f(2-2t)&$for $t\ge 1/2.
\end{cases}
$$
Both are morphisms from $A\rtimes \Gamma \otimes SN\longrightarrow A\rtimes \Gamma \otimes M_n(\C)\otimes SN$ composed with the inclusion $i_{M,N}$. Since the ranges of $g_1$ and $g_2$ are orthogonal, the sum as morphisms correspond to the sum of the $KK$-classes. Now the class of $g_2$ is the class of the morphism $S\delta^A_{\alpha}:A\rtimes \Gamma \otimes SN \longrightarrow A\otimes \Gamma \otimes SN$ induced on the suspension and  $[S\delta^A_{\alpha}]=1_{SN}\otimes [\delta^A_{\alpha}]$. The class of $g_1$ is easily seen to be the opposite of the class of the morphism $Sj_n^A$, for which again $[Sj_n^A]=1_{SN}\otimes [j_n^A].$
\end{proof}

\medskip In Remark \ref{commuT}, we constructed (using $u$) a bimodule yielding a Morita equivalence between the torus algebras $Z(i_n, {i_n})$ and $Z(i_n,\widehat{\,\,i_n})$. The $KK$-class  of this bimodule is an element in $KK(Z(i_n,\widehat{\,\,i_n}),Z(i_n, {i_n}))$. 
We denote by $[E_u]$ its image as a class in $KK(Z(i_n,\widehat{\,\,i_n}),Z(i, i))$, where we use the fact that $Z(i_n, {i_n})=M_n\otimes Z_{i,i}$, where $i$ is the inclusion $i: \C\to N$.

Define
\begin{equation}\label{psiM}
\psi_\alpha^M:= [\Delta_\alpha^u]\otimes [E_u]\in KK(\mathcal T, A\rtimes \Gamma\otimes Z_{i,i})\ .
\end{equation}

\begin{prop}
\label{prop2.4}
Denote by $ \tilde h:A\rtimes \Gamma\otimes M_n(\C)\otimes SM\to A\rtimes \Gamma \otimes M_n\otimes  Z(i,i)$  the inclusion $f\mapsto (0,0,f)$, and let $ p_0, p_1:A\rtimes \Gamma \otimes Z_{i,i}\to A\rtimes \Gamma$ be the natural evaluation maps.
\begin{enumerate}
\item  We have 
\begin{eqnarray}
(p_0)_*( \psi_\alpha^M)&=& [\morph{A}{\alpha}\circ  p], \\
 (p_1)_* (\psi_\alpha^M)&=&[j_n^A\circ  p]\nonumber\\
h^*( \psi_\alpha^M)&=& \tilde h_*\Big( ( [\morph{A}{\alpha}] - [j^A_n])\otimes  [S\,i_{M,N}]\Big).
\label{ev-htilde}
\end{eqnarray}
 \item $\psi_\alpha^M$ does not depend on $u$;
\item $\psi^M_\alpha$ behaves in a natural way with respect to inclusions $i_{\overline M, M}:M\hookrightarrow \overline M$ of ${\rm II}_1$-factors, namely
$$
j_{\overline M, M}(\psi_\alpha^M)=\psi_\alpha^{\overline M}
$$
where  $j_{\overline M, M}$ is the  map induced by $i_{\overline M, M}$ on the cylinders.
\end{enumerate}

\end{prop}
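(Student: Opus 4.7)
The plan is to derive all three parts by combining the formulas of Proposition \ref{prop2.3} with the Morita-compatibility identities of Remark \ref{commuT}(\ref{MoritaT}).

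For (1), I would write $\psi_\alpha^M=[\Delta_\alpha^u]\otimes[E_u]$ as in \eqref{psiM} and use functoriality of the Kasparov product. The first two equations follow from the fact that, by Remark \ref{commuT}(\ref{MoritaT}), the evaluation morphisms $p_0,p_1$ on $Z_{i,i}$ pull back through $[E_u]$ (combined with the Morita reduction $M_n\otimes Z_{i,i}\sim Z_{i,i}$) to the corresponding evaluations on $Z(i_n,\widehat{\,\,i_n})$; composing with Proposition \ref{prop2.3}(2) gives immediately $(p_0)_*(\psi_\alpha^M)=[\delta^A_\alpha\circ p]$ and $(p_1)_*(\psi_\alpha^M)=[j_n^A\circ p]$. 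For the third equation, I first apply Proposition \ref{prop2.3}(3) to get $h^*[\Delta^u_\alpha]=[\Delta^u_\alpha\circ h]=\hat h_*\big(([\delta^A_\alpha]-[j_n^A])\otimes [S\,i_{M,N}]\big)$, and then use the compatibility $(h')^*[E_u]=[h]$ of Remark \ref{commuT}(\ref{MoritaT}) (namely $\hat h^*[E_u]=[\tilde h]$ after the Morita reduction) to transport $\hat h$ through $[E_u]$ and land on $\tilde h$, yielding the stated formula.

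For (2), suppose $u,v\in M_n(\C)\otimes M$ are two unitaries satisfying $(\alpha(g)\otimes\lambda(g))w=w(1\otimes\lambda(g))$ for every $g\in\Gamma$. Then $u^*v$ lies in $M_n(\C)\otimes (M\cap\lambda(\Gamma)')$, whose unitary group is norm path-connected. A continuous path joining $u$ and $v$ through such intertwiners induces, via the explicit formulas of Proposition \ref{prop2.3}(1) and Remark \ref{commuT}(\ref{MoritaT}), simultaneous homotopies of the cycles representing $[\Delta_\alpha^u]$ and of the Morita bimodule $E_u$. Taking the Kasparov product yields that $\psi_\alpha^M$ is independent of $u$. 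This is the same connectedness argument used in the proof of Theorem \ref{Strongrhoconstruction}(1).

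For (3), I would set $v=(\id_{M_n}\otimes i_{\overline M,M})(u)\in M_n(\C)\otimes\overline M$; it satisfies the intertwining property for $\overline M$. The inclusion $i_{\overline M,M}$ induces compatible morphisms between the relevant double-cylinder algebras; direct inspection of the formula in Proposition \ref{prop2.3}(1) shows that $\Delta^v_\alpha=(\id\otimes j_{\overline M,M})\circ\Delta^u_\alpha$, and analogously $E_v$ is the pushforward of $E_u$ under the induced inclusion. Naturality of the Kasparov product then gives $\psi_\alpha^{\overline M}=j_{\overline M,M}(\psi_\alpha^M)$. The main technical obstacle is the compatibility invoked in part (1): $[E_u]$ is a composite of the Morita equivalence of Remark \ref{commuT}(\ref{MoritaT}) (moving $\widehat{\,\,i_n}$ to $i_n$) with the Morita reduction $M_n\otimes Z_{i,i}\sim Z_{i,i}$, and one must check that under this composite the evaluations and the inclusion $\hat h$ on $Z(i_n,\widehat{\,\,i_n})$ pull back to $p_0,p_1$ and $\tilde h$ on $Z_{i,i}$. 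This is bookkeeping rather than a real difficulty, and once it is in place the three parts reduce to direct applications of Proposition \ref{prop2.3}.
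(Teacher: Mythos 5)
Your proposal is correct and follows essentially the same route as the paper: part (1) by combining the formulas of Proposition \ref{prop2.3} with the Morita-compatibility identities of Remark \ref{commuT}, part (2) by writing a second intertwiner as $uw$ with $w$ a unitary in the commutant $(M_n(\C)\otimes M)\cap\{1\otimes\lambda_g\}'$ and using connectedness of the unitary group of a von Neumann algebra, and part (3) by pushing $u$ forward to $\bar u=(\id\otimes i_{\overline M,M})(u)$ and checking compatibility of the induced maps on cylinders. The only difference is that you spell out the bookkeeping for the evaluations and the map $\hat h$ under the composite Morita equivalence, which the paper leaves implicit.
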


\begin{proof}
\begin{enumerate}
\item  The equality follows directly by Proposition \ref{prop2.3} and the first Remark \ref{commuT}.
\item This follows from the connectedness of the group of unitaries of a von Neumann algebra. In fact, another unitary $u'$ in $M_n(\C)\otimes M$  satisfying 
$(\alpha _g\otimes \lambda_g)u'=u'(1\otimes \lambda_g)$ is of the form $uv$, where $v$ is a unitary of the von Neumann algebra $(M_n(\C)\otimes M)\cap \{\lambda_g,\ g\in \Gamma\}' $. 

\item By part (2), we can construct $ \psi^{\overline M}_\alpha $ using the
  unitary $\bar u=(\id_{M_n(\C)} \otimes i_{\overline M, M})(u)$. 

  Let $\bar \iota: \C\to \overline M$ be the inclusion in $\overline M$, and $j_{\overline M, M}:Z(i,i)\to Z(\bar \iota, \bar \iota)$ the induced map on the cylinders. In this way, the Morita equivalence $[E_{\bar u}]$ used in the construction is compatible with the inclusion of cylinders. 
  \end{enumerate}
  \qedhere
\end{proof}

\begin{dfn}
We denote by $\sigma_\alpha^A$ the element 
$$\sigma_\alpha^A=\vartheta( \psi_\alpha^M)\in  KK^1_{\R/\Z}(\mathcal{T}({\iota^A,\morph{A}{\lambda}}),A\rtimes \Gamma)
$$ 
where $\vartheta:KK(\mathcal{T}({\iota^A,\morph{A}{\lambda}}),A\rtimes \Gamma\otimes Z_{i,i})\to KK^1_{\R/\Z}(\mathcal{T}({\iota^A,\morph{A}{\lambda}}),A\rtimes \Gamma)$ is the morphism described in Remark \ref{vartheta}.
\end{dfn}

\begin{prop}\label{prop2.5}
\begin{enumerate}
\item The class $\sigma_\alpha^A$ doesn't depend on the choices involved in this construction.
\item The image $\partial (\sigma_\alpha^A)\in KK(\mathcal{T}({\iota^A,\morph{A}{\lambda}}),A\rtimes \Gamma)$ through the connecting map of the  Bockstein exact sequence is  $\partial (\sigma_\alpha^A)=p^*([\morph{A}{\alpha}]-n1_{A\rtimes \Gamma})$.
\item The image $h^*(\sigma_\alpha^A)\in KK^1_{\R/\Z}(A\rtimes \Gamma\otimes SN,A\rtimes \Gamma)$ is the image of $([\morph{A}{\alpha}]-n1_{A\rtimes \Gamma})\otimes 1_{SN}\in KK(A\rtimes \Gamma\otimes SN,A\rtimes \Gamma\otimes SN)=KK^1(A\rtimes \Gamma\otimes SN,A\rtimes \Gamma \otimes N)$ through the composition 
$$KK^1(A\rtimes \Gamma\otimes SN,A\rtimes \Gamma \otimes N)\longrightarrow KK_{\R}^1(A\rtimes \Gamma\otimes SN,A\rtimes \Gamma)\longrightarrow KK^1_{\R/\Z}(A\rtimes \Gamma\otimes SN,A\rtimes \Gamma).
$$
\end{enumerate}
\begin{proof}
\begin{enumerate}
\item We know that $\psi^M_\alpha$ does not depend on the choice of $u$. Furthermore, it behaves well under trace preserving inclusions of ${\rm II}_1$-factors. Since $KK^1_{\R/\Z}(\cdot, \cdot)$ is defined by the limit \eqref{kcone} with respect to all these inclusions, every choice of $M$ defines the same class.

%
%
%
%
%

\item It follows from Remark \ref{vartheta} and Proposition \ref{prop2.3}, in fact: $\partial (\sigma^M_\alpha)=(\partial \circ \vartheta)(\psi^M_\alpha)=[\delta^A_\alpha\circ p]-[j_n^A\circ p]=p^*([\delta_\alpha^A]-1_{A\rtimes \Gamma})$, since $[j_n^A]=n1_{A\rtimes \Gamma}$.
\item It is a consequence of \eqref{ev-htilde}, because the inclusion $SN\hookrightarrow C_i$ induces the change of coefficients $\R\to \R/\Z$. \qedhere
\end{enumerate}
\end{proof}
\end{prop}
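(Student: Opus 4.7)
The plan is to reduce each of the three claims to Propositions~\ref{prop2.3} and~\ref{prop2.4} together with the description of $\vartheta$ and $\partial \circ \vartheta$ given in Remark~\ref{vartheta}. All three claims will follow essentially formally once we note that, by construction, $\sigma_\alpha^A = \vartheta(\psi_\alpha^M)$ with $\psi_\alpha^M = [\Delta_\alpha^u]\otimes [E_u]\in KK(\mathcal{T},A\rtimes\Gamma \otimes Z_{i,i})$.

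For (1), I would argue as follows. The construction of $\sigma_\alpha^A$ involves two choices: a unitary $u \in M_n(\C)\otimes M$ intertwining $\alpha\otimes \lambda$ with $1\otimes\lambda$, and the ambient ${\rm II}_1$-factor $M$ containing $N$. Independence of $u$ is exactly Proposition~\ref{prop2.4}(2). For the choice of $M$, Proposition~\ref{prop2.4}(3) shows that $\psi_\alpha^M$ is natural under trace-preserving inclusions $M \hookrightarrow \overline M$. Since $KK^1_{\R/\Z}$ is defined as the inductive limit over the directed set $(\mathcal{F}_{{\rm II}_1}(H),\prec)$, two choices $M_1$ and $M_2$ can be compared by embedding both into a common $M_3$, and the naturality shows that the images of $\psi_\alpha^{M_1}$ and $\psi_\alpha^{M_2}$ in $\psi_\alpha^{M_3}$ coincide; thus their images under $\vartheta$ in $KK^1_{\R/\Z}(\mathcal{T},A\rtimes\Gamma)$ are equal.

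For (2), I would just unwind the definitions. By Remark~\ref{vartheta}, the composition $\partial \circ \vartheta$ on $KK(\mathcal{T}, A\rtimes\Gamma \otimes Z_{i,i})$ is the difference $(p_0)_* - (p_1)_*$ where $p_0,p_1:A\rtimes\Gamma \otimes Z_{i,i}\to A\rtimes\Gamma$ are the evaluations at the endpoints. Proposition~\ref{prop2.4}(1) gives $(p_0)_*(\psi_\alpha^M) = [\delta_\alpha^A\circ p]$ and $(p_1)_*(\psi_\alpha^M) = [j_n^A \circ p]$, so $\partial(\sigma_\alpha^A) = p^*([\delta_\alpha^A] - [j_n^A])$, and $[j_n^A] = n\cdot 1_{A\rtimes\Gamma}$.

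For (3), I would again appeal directly to Proposition~\ref{prop2.4}(1), which states
\[
h^*(\psi_\alpha^M) = \tilde h_*\bigl(([\delta_\alpha^A] - [j_n^A]) \otimes [Si_{M,N}]\bigr)\in KK(A\rtimes\Gamma\otimes SN,A\rtimes\Gamma\otimes Z_{i,i}).
\]
Pushing forward by $\vartheta$ commutes with $h^*$, so $h^*(\sigma_\alpha^A)= \vartheta\bigl(\tilde h_*(\cdots)\bigr)$. The inclusion $\tilde h: A\rtimes\Gamma\otimes M_n(\C)\otimes SM \to A\rtimes\Gamma\otimes M_n(\C)\otimes Z(i,i)$ is precisely the one that implements the change of coefficients $\R \to \R/\Z$ (it is the mapping-cone inclusion of $SM$ into $C_i$, up to the splitting of $Z_{i,i}$ used in Remark~\ref{vartheta}). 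Therefore, after tensoring with $[Si_{M,N}]$ which carries $SN$ into $SM$ (i.e.\ realizes the map $KK^1 \to KK^1_\R$), the composite is exactly the image of $([\delta_\alpha^A]- n\cdot 1_{A\rtimes\Gamma})\otimes 1_{SN}$ under the stated composition $KK^1 \to KK^1_\R \to KK^1_{\R/\Z}$. The only conceptual subtlety—which I expect to be the main point requiring care—is tracking these identifications of $SN \subset C_i$, $SM \subset Z_{i,i}$, and the corresponding change-of-coefficient maps; but this is a direct book-keeping exercise once Remark~\ref{vartheta} is unwound.
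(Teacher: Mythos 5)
Your proposal is correct and follows essentially the same route as the paper: part (1) via the independence of $u$ and the naturality of $\psi_\alpha^M$ under inclusions of ${\rm II}_1$-factors from Proposition~\ref{prop2.4} combined with the inductive-limit definition of $KK^1_{\R/\Z}$, part (2) by computing $\partial\circ\vartheta=(p_0)_*-(p_1)_*$ on $\psi_\alpha^M$ using Remark~\ref{vartheta} and the evaluation formulas, and part (3) from the formula \eqref{ev-htilde} together with the identification of the inclusion $SN\hookrightarrow C_i$ with the change of coefficients $\R\to\R/\Z$. The extra detail you supply (comparing two factors inside a common one, and the bookkeeping of $SM\subset Z_{i,i}$) only makes explicit what the paper leaves implicit.
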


\begin{rem}
\label{remark2.6}
 Let $N_1\subset N$ be a sub-factor of $N$ containing $\lambda(\Gamma)$. The torus algebra $\mathcal{T}({\iota^A,\morph{A}{\lambda}})$ associated with $N_1$ is a subalgebra to the one associated with $N$. Obviously, the element $\sigma_\alpha^A$  associated with $N_1$ is just the restriction of the one of $N$.
%

\end{rem}

\subsection{Construction of $\hat\rho_{\alpha}^A$}

Let $A$ be a $\Gamma$-algebra satisfying \KFP, and $\alpha$ a finite dimensional unitary representation of $\Gamma$. 

By Remark \ref{prodtraces}, $[\morph{A}{\alpha}]_{\R}=n1_{A\rtimes \Gamma}^{\R}$ where $1_{A\rtimes \Gamma}^{\R}$ denotes the unit element of the ring $KK_{\R}(A\rtimes \Gamma,A\rtimes \Gamma)$. Using the Bockstein change of coefficients exact sequence, it follows that there exists $z\in KK^1_{{\R}/\Z}(A\rtimes \Gamma,A\rtimes \Gamma)$ whose image in $KK(A\rtimes \Gamma,A\rtimes \Gamma)$ is $[\morph{A}{\alpha}]-n1_{A\rtimes \Gamma}$.

We now construct a class $\hat\rho_{\alpha}^A\in KK^1_{{\R}/\Z}(A\rtimes \Gamma,A\rtimes \Gamma)$ whose image in $KK(A\rtimes \Gamma,A\rtimes \Gamma)$ is $[\morph{A}{\alpha}]-n1_{A\rtimes \Gamma}$.

The important fact in our construction, as for the one given in Definition \ref{defstrongrho} for the stronger context,  is that the element $\hat\rho_{\alpha}^A$ is \emph{independent of all choices.}

\bigskip 
Using the torus $\mathcal{T}({\iota^A, \morph{A}{\lambda}})$ and the corresponding exact sequence (see Section \ref{torus}), this means exactly that there exists a class
 $y\in KK(A\rtimes \Gamma, \mathcal{T}({\iota^A, \morph{A}{\lambda}}))$
 with $p_*(y)=1_{A\rtimes \Gamma}$ where $p$ is the projection $p:\mathcal{T}({\iota^A, \morph{A}{\lambda}}) \longrightarrow A\rtimes \Gamma$.

\begin{dfn}\label{defweakrho}
We denote by $\hat\rho_{\alpha}^A\in KK^1_{\R/\Z}(A\rtimes \Gamma,A\rtimes \Gamma)$ the product $y\otimes _{\mathcal{T}({\iota^A, \morph{A}{\lambda}})}\sigma_\alpha ^A$.
\end{dfn}

\begin{theorem}
\begin{enumerate}
\item The image of $\hat\rho_{\alpha}^A $ under the connecting map of the Bockstein change of coefficients exact sequence $KK^1_{\R/\Z}(A\rtimes \Gamma,A\rtimes \Gamma)\to KK(A\rtimes \Gamma,A\rtimes \Gamma)$ is $[\morph{A}{\alpha}]-n1_{A\rtimes \Gamma}$.
\item The class $\hat\rho_{\alpha}^A $ does not depend on the choices of $N$ and $y$ involved in its construction.
\end{enumerate}
\begin{proof}
 
\begin{enumerate}
\item We have $\partial \hat\rho_{\alpha}^A=y\otimes   _{\mathcal{T}({\iota^A, \morph{A}{\lambda}})}\partial(\sigma_\alpha ^A)=y\otimes   _{\mathcal{T}({\iota^A, \morph{A}{\lambda}})}p^*([\morph{A}{\alpha}]-n1_{A\rtimes \Gamma})$ by Proposition \ref{prop2.5}. Whence  $\partial \hat\rho_{\alpha}^A=p_*y\otimes _{A\rtimes \Gamma}([\morph{A}{\alpha}]-n1_{A\rtimes \Gamma})=[\morph{A}{\alpha}]-n1_{A\rtimes \Gamma}$ since $p^*y=1_{A\rtimes \Gamma}$.

\item Fix first $N$. We show that if $z\in KK(A\rtimes \Gamma, \mathcal{T}({\iota^A,\morph{A}{\lambda}}))$ satisfies $p_*(z)=0$ then $z\otimes _{\mathcal{T}({\morph{A}{\lambda},\iota^A})}\sigma_\alpha ^A=0$. Using the torus exact sequence, it is enough to show that for every $z\in KK(A\rtimes \Gamma, A\rtimes \Gamma\otimes SN)$ we have $h_*(z)\otimes _{\mathcal{T}({\iota^A, \morph{A}{\lambda}})}\sigma_\alpha ^A=z\otimes _{A\rtimes \Gamma\otimes SN}h^*(\sigma_\alpha ^A)=0$. But, as $A$ satisfies \KFP, $[\morph{A}{\alpha}]-n1_{A\rtimes \Gamma}$ is the $0$ element in $KK_\R(A\rtimes \Gamma,A\rtimes \Gamma)$ therefore, by Proposition \ref{prop2.5}, $h^*(\sigma_\alpha ^A)=0$. This shows that, given $N$, $\hat\rho_{\alpha}^A $ is independent on $y$.

Let $N_1$ be a ${\rm II}_1$-factor containing $N$.  Denote by $j:N\to N_1$ the inclusion. Put $\lambda_1=j\circ \lambda:C^*\Gamma \to N_1$, $\iota^1_A=(\id_{A\rtimes \Gamma}\otimes j):A\rtimes \Gamma \to A\rtimes \Gamma\otimes N_1$ and $p_1:\mathcal{T}({\iota^1_A,\morph{A}{\lambda_1}})\to A\rtimes \Gamma$ the morphism $(a,f)\mapsto a$. Put $y_1=j_*(y)\in \mathcal{T}({\iota^1_A,\morph{A}{\lambda_1}})$. It follows from remark \ref{remark2.6} that $(N_1,y_1)$ and $(N,y)$ define the same element $\hat\rho_{\alpha}^A$.
Using amalgamated free products, we then see that $\hat\rho_{\alpha}^A $ is also independent on $N$.
\qedhere\end{enumerate}
\end{proof}
\end{theorem}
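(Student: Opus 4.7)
My plan is to split the argument into the two parts of the theorem. For part (1), the idea is a direct computation: since $\hat\rho_\alpha^A = y \otimes_{\mathcal{T}} \sigma_\alpha^A$ and the connecting map $\partial$ of the Bockstein sequence is natural with respect to Kasparov product on the right, we get $\partial \hat\rho_\alpha^A = y \otimes_{\mathcal{T}} \partial\sigma_\alpha^A$. By Proposition \ref{prop2.5}(2), $\partial\sigma_\alpha^A = p^*([\morph{A}{\alpha}] - n\cdot 1_{A\rtimes\Gamma})$. Using the identity $y \otimes_{\mathcal{T}} p^*(x) = p_*(y) \otimes_{A\rtimes\Gamma} x$ together with $p_*(y) = 1_{A\rtimes\Gamma}$ yields the claim.

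For part (2), I would first fix $N$ and prove independence of $y$. If $y, y'$ are two valid lifts, then $z := y - y'$ satisfies $p_*(z) = 0$, so by the torus exact sequence of Proposition \ref{torexS} there is $w \in KK(A\rtimes\Gamma, A\rtimes\Gamma \otimes SN)$ with $z = h_*(w)$. Associativity gives $z \otimes_{\mathcal{T}} \sigma_\alpha^A = w \otimes_{A\rtimes\Gamma\otimes SN} h^*(\sigma_\alpha^A)$, and by Proposition \ref{prop2.5}(3) this factors through the class of $([\morph{A}{\alpha}] - n\cdot 1_{A\rtimes\Gamma}) \otimes 1_{SN}$ in $KK_\R^1$. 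Here the \KFP\ hypothesis is essential: it forces $[\morph{A}{\alpha}] - n\cdot 1_{A\rtimes\Gamma} = 0$ in $KK_\R(A\rtimes\Gamma, A\rtimes\Gamma)$ (cf.\ Remark \ref{prodtraces}), whence $h^*(\sigma_\alpha^A) = 0$ and the difference vanishes.

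For independence of $N$, I would reduce to the case where $N \subset N_1$ is a trace-preserving inclusion, invoking Remark \ref{remark2.6}: the element $\sigma_\alpha^A$ constructed from $N$ is the restriction of the one constructed from $N_1$, so pushing a lift $y$ forward via $j_*: \mathcal{T}(\iota^A, \morph{A}{\lambda}) \to \mathcal{T}(\iota^A_1, \morph{A}{\lambda_1})$ yields a valid lift $y_1$ producing the same element $\hat\rho_\alpha^A$. To handle two arbitrary factors $N, N'$ carrying trace-preserving morphisms from $C^*\Gamma$, I would embed them jointly into the amalgamated free product over the group von Neumann algebra (which is a ${\rm II}_1$-factor by the standard references cited in Section 1), reducing both choices to a common ambient factor and applying the previous step twice, combined with the independence of $y$ already established.

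The main obstacle is the independence of $y$ step, since it is the point where the \KFP\ hypothesis must be leveraged in a nontrivial way. Everything else is essentially bookkeeping with the Puppe/torus sequence and naturality, but the vanishing of $h^*(\sigma_\alpha^A)$ after passing to $\R/\Z$-coefficients really does require that the trace-twisted representation class $[\morph{A}{\alpha}]$ agrees with $n\cdot 1_{A\rtimes\Gamma}$ over $\R$; this is exactly what the whole construction of $\sigma_\alpha^A$ via $\Delta_\alpha^u$ and the Morita bimodule $E_u$ is designed to encode, so the main work is hidden in Proposition \ref{prop2.5}(3) rather than in the present theorem itself.
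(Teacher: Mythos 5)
Your proposal is correct and follows essentially the same route as the paper's own proof: part (1) by naturality of $\partial$ and the identity $y\otimes_{\mathcal T}p^*(x)=p_*(y)\otimes x$, and part (2) by first showing that any $z$ with $p_*(z)=0$ kills $\sigma_\alpha^A$ via the torus exact sequence and the vanishing of $h^*(\sigma_\alpha^A)$ forced by the weak (KFP) hypothesis through Proposition \ref{prop2.5}(3), then reducing the dependence on $N$ to trace-preserving inclusions via Remark \ref{remark2.6} and amalgamated free products. No gaps.
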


\begin{rem}
If $A$ satifies \SKFP, it is in fact quite easy to compare the two constructions of $\rho_\alpha ^A$ of Definition \ref{defstrongrho} and the one we just constructed, by showing that $\hat\rho_{\alpha}^A =j_\Gamma(\rho_\alpha ^A)$, where $j_\Gamma:KK_{\R/\Z}^{\Gamma,1}(A,A)\to KK_{\R/\Z}^{1}(A\rtimes \Gamma,A\rtimes \Gamma)$ is Kasparov's descent morphism.

Indeed, the Kasparov descent of a Kasparov bimodule $(E,F)\in {\bf E}^\Gamma(A,A\otimes Z_i)$ joining $(A,0)$ and $(A\otimes P_\lambda)$ is an element $y\in KK(A\rtimes \Gamma, \mathcal{T}({\iota^A,\morph{A}{\lambda}}))$ as above; the gluing construction ``$\diamond$'' corresponds to the bimodule $\psi _\alpha^M$. It follows that the image by $j_\Gamma$ of  (the class of) the bimodule  $$(E\otimes V_{\alpha},F\otimes 1)\diamond _{\id_A\otimes u} (E\otimes V_n,F\otimes 1)$$ is $y\otimes _{\mathcal{T}({\iota^A,\morph{A}{\lambda}})}\psi _\alpha^M$. Taking the images in $KK^1_{\R/\Z}$, the result follows.
\end{rem}
\medskip

\providecommand{\bysame}{\leavevmode\hbox to3em{\hrulefill}\thinspace}
\providecommand{\MR}{\relax\ifhmode\unskip\space\fi MR }
\providecommand{\MRhref}[2]{%
  \url{http://www.ams.org/mathscinet-getitem?mr=#1}{#2}
}
\providecommand{\href}[2]{#2}


\providecommand{\bysame}{\leavevmode\hbox to3em{\hrulefill}\thinspace}
\providecommand{\MR}{\relax\ifhmode\unskip\space\fi MR }
\providecommand{\MRhref}[2]{%
  \url{http://www.ams.org/mathscinet-getitem?mr=#1}{#2}
}
\providecommand{\href}[2]{#2}

    \bigskip
    \bigskip

\noindent Paolo Antonini\\
D\'epartement de Math\'ematiques\\
B\^atiment 425 Facult\'e des Sciences d'Orsay,  Universit\'e Paris-Sud \\
F-91405 Orsay Cedex\\
e-mail: \texttt{paolo.antonini@math.u-psud.fr}

    \bigskip

\noindent Sara Azzali\\
Institut f\"ur Mathematik\\
Universit\"at Potsdam\\
Am Neuen Palais, 10\\
14469 Potsdam, Germany\\
e-mail: \texttt{azzali@uni-potsdam.de}

    \bigskip

\noindent Georges Skandalis\\
Universit\'e Paris Diderot, Sorbonne Paris Cit\'e\\
Sorbonne Universit\'es, UPMC Paris 06, CNRS, IMJ-PRG\\
UFR de Math\'ematiques, {\sc CP} {\bf 7012} - B\^atiment Sophie Germain \\
5 rue Thomas Mann, 75205 Paris CEDEX 13, France\\
e-mail: \texttt{skandalis@math.univ-paris-diderot.fr}

\end{document}